\documentclass[pdflatex,sn-basic]{sn-jnl}

\usepackage{graphicx}%
\usepackage{multirow}%
\usepackage{amsmath,amssymb,amsfonts}%
\usepackage{amsthm}%
\usepackage{mathrsfs}%
\usepackage[title]{appendix}%
\usepackage{xcolor}%
\usepackage{textcomp}%
\usepackage{manyfoot}%
\usepackage{booktabs}%
\usepackage{enumitem}
\usepackage{siunitx}
\usepackage{hyphenat}
\usepackage{algorithm}%
\usepackage{algorithmicx}%
\usepackage{algpseudocode}%
\usepackage{listings}%
\usepackage[justification=centering]{caption}
\usepackage{tikz-cd}

\newcommand{\Dgm}{\mathrm{Dgm}}
\newcommand{\Pers}{\mathrm{Pers}}

\theoremstyle{thmstyleone}%
\newtheorem{theorem}{Theorem}
\newtheorem{proposition}[theorem]{Proposition}%
\newtheorem{lemma}[theorem]{Lemma}%
\newtheorem{corollary}[theorem]{Corollary}%

\theoremstyle{thmstyletwo}%
\newtheorem{example}{Example}%
\newtheorem{remark}{Remark}%
\newtheorem{assumption}{Assumption}%

\theoremstyle{thmstylethree}%
\newtheorem{definition}{Definition}%

\raggedbottom

\begin{document}

\title[Persistence-Based Statistics for Detecting Structural Changes in High-Dimensional Point Clouds]%
      {Persistence-Based Statistics for Detecting Structural Changes in High-Dimensional Point Clouds}

\author*[1]{\fnm{Toshiyuki}\sur{Nakayama}}\email{nakayama\_t25@shunan-u.ac.jp}

\affil[1]{%
  \orgdiv{Faculty of Information Science, }%
  \orgname{Shunan University, }%
  \orgaddress{%
    \street{843-4-2, Gakuendai, }%
    \city{Shunan City, }%
    \state{Yamaguchi, }%
    \postcode{745-8566, }
    \country{Japan}%
  }%
}

\abstract{
We study the probabilistic behavior of persistence-based statistics and propose a novel nonparametric framework for detecting structural changes in high-dimensional random point clouds. We establish moment bounds and tightness results for classical persistence statistics—total and maximum persistence—under general distributions, with explicit variance-scaling behavior derived for Gaussian mixture models. Building on these results, we introduce a bounded and normalized statistic based on persistence landscapes combined with the Jensen–Shannon divergence, and we prove its Hölder continuity with respect to perturbations of the input point clouds. The resulting measure is stable, scale- and shift-invariant, and well suited for finite-sample nonparametric inference via permutation testing. An illustrative numerical study using dynamic attribute vectors from decentralized governance data demonstrates the practical applicability of the proposed method. Overall, this work provides a statistically rigorous and computationally stable approach to change-point detection in complex, high-dimensional data.
}

\keywords{
Nonparametric inference,
Change-point detection,
Random point clouds,
Topological data analysis,
Persistent homology,
Persistence landscapes
}

\maketitle

\section{Introduction}
\label{introduction}
Statistical detection of structural changes in high-dimensional data
is a fundamental problem in modern data analysis, with applications
ranging from complex networks to time-evolving multivariate systems.
When data are represented as point clouds, such changes often manifest
as alterations in their geometric or topological organization.
Designing nonparametric and computationally stable procedures that can
capture such changes in a distribution-free manner remains a challenging task.

Topological data analysis (TDA) provides a powerful framework for
quantifying geometric and topological structure in point cloud data.
In many real-world systems, however, the underlying data-generating
process evolves over time, leading to gradual or abrupt changes in the
associated topology.
Understanding how persistence statistics behave under such variability
—and how they can be used for statistically rigorous inference—remains
an open problem at the interface of statistics, probability theory,
and computational topology.

The present work addresses this problem from a theoretical standpoint. 
We investigate the probabilistic properties of persistence statistics, 
establish explicit moment bounds and scaling relations under general and Gaussian-mixture settings, 
and develop a new persistence-based divergence for change detection. 
Specifically, our contributions are threefold:

\begin{enumerate}[label=(\arabic*)]
\item \textbf{Moment and tightness results for random persistence:} 
We derive upper bounds for the \(p\)-th moments of total and maximum persistence 
under minimal distributional assumptions, and we show tightness of the normalized statistics 
under variance scaling. 
These results provide new quantitative insight into the stochastic behavior of persistence features.

\item \textbf{A new persistence-landscape-based divergence:} 
We define a scale- and shift-invariant statistic combining persistence landscapes with 
the Jensen--Shannon distance, and we prove its H\"older continuity 
with respect to perturbations of point clouds. 
This yields a stable metric suitable for nonparametric statistical inference.

\item \textbf{Illustrative application to dynamic point clouds:} 
We demonstrate the proposed framework on time-evolving high-dimensional attribute vectors, 
detecting regime shifts in a decentralized voting dataset. 
The example highlights how theoretical stability results translate into practical robustness.
\end{enumerate}

Our study thus contributes to the ongoing effort to build 
a probabilistic theory of persistent homology, 
linking classical stochastic analysis with modern TDA. 
Beyond the illustrative example, the proposed framework applies generally 
to random point clouds and dynamic metric spaces, 
providing a mathematically grounded approach to topological change detection.

\section{Background and Related Work}\label{background}

\subsection{High-Dimensional and Dynamic Point Clouds}
Modern data often arise as collections of high-dimensional points whose underlying 
geometry and topology evolve over time. 
Examples include dynamic networks, evolving manifolds, 
and temporal collections of attribute vectors in complex systems. 
Analyzing such data requires statistical methods that are sensitive to 
nonlinear structural changes yet robust to noise and scale variation.
A central challenge is to detect significant shifts in the topological organization 
of these point clouds---that is, changes in their connectivity or higher-dimensional 
features---without imposing strong distributional assumptions.

Our motivation stems from the general problem of monitoring \emph{random point clouds} 
generated by stochastic processes whose dispersion or correlation structure may vary over time.
While specific applications may include social, biological, or economic systems,
the focus of this study is on the mathematical and statistical foundations 
that enable such topological monitoring in a general probabilistic setting.

\subsection{Topological Data Analysis and Related Work}
Topological data analysis (TDA) provides a powerful framework 
for extracting geometric and topological information from point clouds.
Through a family of simplicial complexes constructed across 
varying proximity thresholds---a process known as \emph{filtration}---%
TDA records the birth and death of topological features in a 
\emph{persistence diagram}, a multiset of intervals representing the data’s shape.
The theoretical foundations of persistent homology were established 
by \cite{carlsson2009} and \cite{edelsbrunner2010}, 
and have since found numerous applications in the sciences~\cite{lum2013, ghrist2008}.

To enable statistical analysis, several stable vectorizations of persistence diagrams 
have been proposed. 
The \emph{persistence landscape} introduced by \cite{bubenik2015} represents a diagram as a collection of functions, 
while the \emph{persistence image} introduced by \cite{adams2017} provides a smooth, fixed-length embedding. 
These representations form the basis of the emerging field of \emph{statistical TDA}, 
which connects persistent homology with classical inference and probability theory. 

Recent studies have explored persistence-based indicators of change or instability in complex systems.
For instance, \cite{gidea2018} applied persistent homology to 
dynamic correlation networks, while \cite{islambekov2019} 
combined persistence features with nonparametric change-point algorithms for 
environmental data. 
More recently, \cite{yao2025} analyzed time-varying financial networks 
through persistent homology to detect regime shifts. 
These works highlight the potential of topological methods for detecting 
structural transitions, yet a unified \emph{theoretical} and \emph{nonparametric} framework 
for this purpose has not been established.

From a statistical perspective, many existing persistence-based approaches
focus primarily on feature extraction or empirical summaries, while
questions of finite-sample behavior, distribution-free inference, and
computational stability under data perturbations remain comparatively less explored.

The present study addresses this gap by developing a mathematically grounded, 
persistence-based framework for detecting structural changes in random point clouds.
We establish moment and tightness bounds for classical persistence statistics 
and prove stability and continuity results for a newly defined landscape-based divergence.

\section{Persistence Statistics and Moment Bounds}
This section adopts a standard formal framework for the persistent homology
of finite point clouds, where persistence diagrams are treated as
diagram-valued mappings induced from the data through filtration
constructions.
The overall setup—including the representation of a point cloud as a
finite subset of a metric space and the use of persistence-based
real-valued quantities derived from filtrations—follows the general
formulation introduced by \cite{carriere2021}.
Within this framework, we focus on probabilistic moment bounds and
scaling properties of persistence statistics.
\subsection{Persistence Statistics}
\label{sec:general_preliminaries}
Let \(d, N \in \mathbb{N}\) and let \(x = (x_1, \ldots, x_N) \in \mathcal{X}^N\) 
be a finite point cloud in \(\mathcal{X}:=\mathbb{R}^d\), where each element of \(\mathcal{X}\) represents an attribute vector. 
The set of all filtrations on the full simplex \(K = 2^{\{1,\ldots,N\}} \setminus \{\emptyset\}\) is 
\[
\mathrm{Filt}_K =
\left\{
\Phi = (\Phi_\sigma)_{\sigma \in K} \in \mathbb{R}^{\lvert K \rvert} \,:\, \,
\sigma_1 \subset \sigma_2 \implies \Phi_{\sigma_1} \le \Phi_{\sigma_2}
\right\}
\subset\mathbb{R}^{2^N - 1}.
\]
Here, \(\lvert K \rvert\) denotes the cardinality of \(K\).

Let \(\Dgm\) denote the space of persistence diagrams, i.e. locally finite multisets 
on \(\{(b,d)\in\overline{\mathbb{R}}^2: b<d\}\) with the convention that the diagonal 
\(\{(t,t):t\in\overline{\mathbb{R}}\}\) has infinite multiplicity. 
We endow \(\Dgm\) with the bottleneck distance \(d_B\) (Appendix~\ref{sec:bottleneck_distance}).
The map
\[
\Pers_\ell:\mathrm{Filt}_K\to\Dgm
\]
assigns to each filtration its persistence diagram in homological degree \(\ell\in\mathbb{N}_0\); 
this is the \emph{persistence map}.
(Basic notions of simplicial complexes, homology, and persistence diagrams 
are recalled in Appendix~\ref{sec:tda_basics}.)

For any \(D \in \Dgm\), define the \emph{total persistence} in degree \(\ell\in\mathbb{N}_0\) \(E_{\ell,\mathrm{total}} \colon \Dgm \to \mathbb{R}\) by 
\[
E_{\ell,\mathrm{total}}(D) = \sum_{(b, d) \in D,\, d < \infty} (d - b),
\]
where the sum is taken with multiplicities.
Similarly, 
for any \(D \in \Dgm\), define the \emph{maximum persistence} in degree \(\ell\in\mathbb{N}_0\) \(E_{\ell,\mathrm{max}} \colon \Dgm \to \mathbb{R}\) by 
\[
E_{\ell,\mathrm{max}}(D) = \max_{(b, d) \in D,\, d < \infty} (d - b).
\]
Both quantities are finite for finite diagrams, and intuitively measure the 
total and maximal lifespans of \(\ell\)-dimensional topological features.

\paragraph{Vietoris--Rips filtration.}
In what follows we work with the Vietoris--Rips filtration map \(\Phi \colon \mathcal{X}^N \to \mathrm{Filt}_K\), defined by 
\begin{equation}
\label{eq:vr_filtration}
\Phi_\sigma(x)
  := \max_{i,j\in\sigma}\|x_i-x_j\|, \qquad \sigma\in K,
\end{equation}
where \(\|\cdot\|\) denotes the Euclidean norm.  
This yields the \emph{Vietoris–Rips persistence diagram map} \(\mathcal{D}_\ell\colon\mathcal{X}^N\to\Dgm\) in degree \(\ell\), defined by
\[
\mathcal{D}_\ell(x) := \Pers_\ell(\Phi(x)) \in \Dgm,\quad x\in\mathcal{X}^N.
\]

\begin{remark}
Here we adopt the convention that the Vietoris–Rips filtration parameter 
is given by the pairwise distance itself, rather than by half the distance. 
This choice is consistent with standard implementations such as Gudhi.
\end{remark}

\begin{example}[Three-point configuration]
Let \(x=(x_1,x_2,x_3)\in(\mathbb{R}^d)^3=\mathcal{X}^3\),
and let \(s_{(1)}(x)\le s_{(2)}(x)\le s_{(3)}(x)\) be the ordered pairwise distances
\(\{\|x_1-x_2\|,\|x_1-x_3\|,\|x_2-x_3\|\}\).
The Vietoris–Rips filtration values are
\begin{align*}
&\Phi_{\{i\}}(x)=0 \quad (i=1,2,3),\\
&\Phi_{\{i,j\}}(x)=\|x_i-x_j\| \quad (1\le i<j\le 3),\\
&\Phi_{\{1,2,3\}}(x)=s_{(3)}(x).
\end{align*}
The resulting diagrams are given by
\begin{align*}
\mathcal{D}_0(x) &=
\{(0, s_{(1)}(x))\} \uplus
\{(0, s_{(2)}(x))\} \uplus
\{(0, \infty)\},\\
\mathcal{D}_\ell(x) &= \emptyset,\qquad \ell\ge1.
\end{align*}
Here, the symbol \(\uplus\) denotes the disjoint union of finite multisets,
so that multiplicities of identical birth–death pairs are added accordingly.
These represent the creation and merging of connected components in \(H_0\).
Hence
\[
E_{0,\mathrm{total}}(\mathcal{D}_0(x))
  =s_{(1)}(x)+s_{(2)}(x),\qquad
E_{0,\mathrm{max}}(\mathcal{D}_0(x))
  =s_{(2)}(x).
\]
\end{example}

\subsection{Continuity and Regularity of Persistence Statistics}
In what follows, to evaluate quantities such as expectations of powers of persistence statistics (total and maximum persistence) (see Section~\ref{sec:general_preliminaries}), we first confirm basic regularity (continuity and semicontinuity, hence measurability):

\begin{proposition}[Basic Regularity]
\label{prop:continuous}
Under the setting of Section~\ref{sec:general_preliminaries}, the following follows. 
\begin{enumerate}
\item[(i)] For each \(\ell\), map \(\Pers_\ell:\mathrm{Filt}_K\to \Dgm\) is continuous (with respect to \(d_B\)).
\item[(ii)] For each \(\ell\), map \(\mathcal{D}_\ell=\Pers_\ell\circ\Phi:\mathcal{X}^N\to\Dgm\) is continuous.
\item[(iii)] For each \(\ell\), map \(E_{\ell,\mathrm{max}}:\Dgm\to\mathbb{R}\) is continuous and \(E_{\ell,\mathrm{total}}:\Dgm\to\mathbb{R}\) is lower semicontinuous.
\end{enumerate}
\end{proposition}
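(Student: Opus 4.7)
Part (i) is a direct invocation of the classical stability theorem for persistence diagrams of filtrations on a fixed finite simplicial complex: for any two filtrations $\Phi,\Psi\in\mathrm{Filt}_K$ one has $d_B(\Pers_\ell(\Phi),\Pers_\ell(\Psi))\le\|\Phi-\Psi\|_\infty$, which in fact gives $1$-Lipschitz dependence and hence continuity. For (ii), each coordinate of the Vietoris--Rips filtration, $\Phi_\sigma(x)=\max_{i,j\in\sigma}\|x_i-x_j\|$, is a finite maximum of continuous functions of $x\in\mathcal{X}^N$, so $\Phi$ is continuous as a map into $\mathrm{Filt}_K\subset\mathbb{R}^{|K|}$ with the sup norm. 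Composing with the continuous $\Pers_\ell$ from (i) yields continuity of $\mathcal{D}_\ell=\Pers_\ell\circ\Phi$.

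\textbf{Plan for $E_{\ell,\mathrm{max}}$.} I will extract a $2$-Lipschitz bound from the matching definition of the bottleneck distance. Fix any $\epsilon>d_B(D,D')$ and an $\epsilon$-matching $\gamma$ between $D$ and $D'$; since infinite-death points must be matched to infinite-death points (else the matching cost is infinite), only finite-death pairs contribute to $E_{\ell,\mathrm{max}}$. Let $(b^{\ast},d^{\ast})\in D$ attain $E_{\ell,\mathrm{max}}(D)$. Either $\gamma(b^{\ast},d^{\ast})=(b'',d'')\in D'$ is off-diagonal, in which case $|(d''-b'')-(d^{\ast}-b^{\ast})|\le 2\epsilon$, or $(b^{\ast},d^{\ast})$ is matched to the diagonal, forcing $(d^{\ast}-b^{\ast})/2\le\epsilon$ and hence $E_{\ell,\mathrm{max}}(D)\le 2\epsilon$. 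In both cases $E_{\ell,\mathrm{max}}(D')\ge E_{\ell,\mathrm{max}}(D)-2\epsilon$, and exchanging $D\leftrightarrow D'$ gives $|E_{\ell,\mathrm{max}}(D)-E_{\ell,\mathrm{max}}(D')|\le 2\,d_B(D,D')$, whence continuity.

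\textbf{Plan for $E_{\ell,\mathrm{total}}$ and the main obstacle.} Suppose $D_n\to D$ in $d_B$ with $\epsilon_n$-matchings $\gamma_n$ and $\epsilon_n\to 0$. For any finite subcollection $\{(b_k,d_k)\}_{k=1}^m\subset D$ of finite-death points with positive persistence, once $\epsilon_n<\min_k(d_k-b_k)/2$ the matching $\gamma_n$ cannot send any $(b_k,d_k)$ to the diagonal, so it assigns each to a distinct off-diagonal pair $(b_k^n,d_k^n)\in D_n$ with $(b_k^n,d_k^n)\to(b_k,d_k)$ in $\mathbb{R}^2$. Consequently
\[
E_{\ell,\mathrm{total}}(D_n)\;\ge\;\sum_{k=1}^m(d_k^n-b_k^n)\;\longrightarrow\;\sum_{k=1}^m(d_k-b_k),
\]
and taking $\liminf_n$ and then the supremum over finite subcollections of $D$ yields $\liminf_n E_{\ell,\mathrm{total}}(D_n)\ge E_{\ell,\mathrm{total}}(D)$. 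The main obstacle is exactly the one-sidedness here: many low-persistence points in $D_n$ can carry arbitrarily large aggregate mass without increasing $d_B$, which is why $E_{\ell,\mathrm{total}}$ cannot be fully continuous. The key quantitative barrier $(d_k-b_k)/2$ provided by bottleneck stability prevents genuine mass from escaping to the diagonal in the limit, giving lower but not upper semicontinuity.
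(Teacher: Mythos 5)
Your proposal is correct and follows essentially the same route as the paper: parts (i) and (ii) are handled identically (stability of $\Pers_\ell$ in the bottleneck distance, then continuity of the Vietoris--Rips filtration map coordinatewise and composition). The only difference is in (iii), where the paper simply cites the literature, whereas you supply the standard underlying arguments in full --- the $2$-Lipschitz bound for $E_{\ell,\mathrm{max}}$ via $\epsilon$-matchings and the finite-subcollection $\liminf$ argument for lower semicontinuity of $E_{\ell,\mathrm{total}}$ --- both of which are valid and consistent with the cited results.
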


\begin{proof}
(i) The continuity of each \(\Pers_\ell\) follows from the stability of persistence diagrams 
with respect to the bottleneck distance, as shown by \cite{cohen2007}. 

(ii) The filtration map \(\Phi:\mathcal{X}^N\to \mathrm{Filt}_K\) in \eqref{eq:vr_filtration} 
is continuous (it is a composition of continuous maps of Euclidean distances). 
Thus \(\mathcal{D}_\ell=\Pers_\ell\circ\Phi\) is continuous as a composition of continuous maps.

(iii) This is already known; see, e.g., \cite{cohen2007}. 
\end{proof}

We may now define the following maps:
\[
\begin{aligned}
\mathcal{L}_{\ell,\mathrm{total}} &:= E_{\ell,\mathrm{total}} \circ \mathcal{D}_\ell : \mathcal{X}^N \to \mathbb{R}, \\
\mathcal{L}_{\ell,\mathrm{max}}   &:= E_{\ell,\mathrm{max}}   \circ \mathcal{D}_\ell : \mathcal{X}^N \to \mathbb{R}.
\end{aligned}
\]

These maps assign to each point cloud the total or maximum bar length of its persistence diagram.
By Proposition~\ref{prop:continuous}, they are at least lower semicontinuous  
and hence measurable. 
In the next section, we endow the point cloud with a probability distribution and evaluate the 
\(p\)-th moments of these maps.

\subsection{Moment Bounds for General Distributions}

Let \((\Omega, \mathcal{F}, \mathbb{P})\) be a probability space. 
We define the random vector
\[
  X = \bigl( X_1, \ldots, X_N \bigr) : 
  \Omega \to \mathcal{X}^N,
\]
where \(X_1,\ldots,X_N\) are \(\mathcal{X}\)-valued random variables 
having the same marginal distribution under \(\mathbb{P}\), 
but not necessarily independent.
(Here, each \(X_i\in\mathcal{X}\) represents one attribute vector.)
Throughout, \(\mathbb{E}[\cdot]\) denotes the expectation with respect to \(\mathbb{P}\).

Throughout this section, let \(\ell \in \mathbb{N}_0\) be arbitrary and set
\[
\mathscr{L}_\ell :=
\{
\mathcal{L}_{\ell, \mathrm{total}}, \mathcal{L}_{\ell, \mathrm{max}}\}.
\]
We write \(\mathcal{L} \in \mathscr{L}_\ell\) for a generic element of this set.
We first provide an upper bound on the \(p\)-th moment of \(\mathcal{L}(X)\) without assuming any specific distribution for \(X\).

\begin{theorem}[Moment Bounds of Persistence Statistics for General Distributions]
\label{thm:main_estimation}
For any \(p \ge 1\), the following inequality holds:
\[
\mathbb{E}\left[\mathcal{L}(X)^p\right]
\le C_{\ell,p}^{\mathrm{VR}}(N)\,\mathbb{E}\left[\Vert X_1\Vert^p\right],
\]
where \(\|\cdot\|\) is the Euclidean norm and 
\[
n_\ell(N):=\min\!\left\{
\binom{N}{\ell+1},\, 
\binom{N}{\ell+2}
\right\}
,\quad
C_{\ell,p}^{\mathrm{VR}}(N):=2^{p}N\left(n_\ell(N)\right)^p.
\]
\end{theorem}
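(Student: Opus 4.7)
The plan is to reduce the bound on $\mathbb{E}[\mathcal{L}(X)^p]$ to a pointwise deterministic estimate of $\mathcal{L}(x)$ in terms of the norms $\|x_i\|$, relying on two elementary properties of the Vietoris--Rips diagram: (i) the number of finite bars in degree $\ell$ is at most $n_\ell(N)$, and (ii) every such bar has length bounded by the diameter of the point cloud.

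For (i), I would appeal to the standard persistence pairing: each finite bar in $\mathcal{D}_\ell(x)$ corresponds, through the matrix reduction of the boundary operator, to a unique $\ell$-simplex (its birth simplex) and a unique $(\ell+1)$-simplex (its death simplex). Since $K$ contains exactly $\binom{N}{\ell+1}$ simplices of dimension $\ell$ and $\binom{N}{\ell+2}$ simplices of dimension $\ell+1$, the number of finite bars is at most $\min\bigl\{\binom{N}{\ell+1},\binom{N}{\ell+2}\bigr\}=n_\ell(N)$. For (ii), every finite bar $(b,d)\in \mathcal{D}_\ell(x)$ has $d=\Phi_\tau(x)=\max_{i,j\in\tau}\|x_i-x_j\|$ for some simplex $\tau$, so by the triangle inequality
\[
d-b\;\le\; d\;\le\; \max_{i,j}\|x_i-x_j\|\;\le\;2\max_{1\le i\le N}\|x_i\|.
\]

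Combining (i) and (ii) yields, for both choices $\mathcal{L}\in\mathscr{L}_\ell$, the uniform pointwise bound
\[
\mathcal{L}(X)\;\le\; 2\,n_\ell(N)\,\max_{1\le i\le N}\|X_i\|,
\]
where the factor $n_\ell(N)$ is needed for $\mathcal{L}_{\ell,\mathrm{total}}$ (to absorb the number of summands) and is absorbed trivially for $\mathcal{L}_{\ell,\max}$ since $n_\ell(N)\ge 1$. Raising to the $p$-th power (which is monotone on $[0,\infty)$ for $p\ge 1$) and using $\max_i\|X_i\|^p\le\sum_{i=1}^N\|X_i\|^p$, we obtain
\[
\mathcal{L}(X)^p\;\le\; 2^p\,n_\ell(N)^p\sum_{i=1}^N\|X_i\|^p.
\]
Taking expectation and invoking the fact that $X_1,\ldots,X_N$ share a common marginal distribution (independence is not required), the right-hand side becomes $2^p\,N\,n_\ell(N)^p\,\mathbb{E}[\|X_1\|^p]=C_{\ell,p}^{\mathrm{VR}}(N)\,\mathbb{E}[\|X_1\|^p]$, as claimed. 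Measurability of $\mathcal{L}(X)$ is guaranteed by Proposition~\ref{prop:continuous}, so all quantities are well defined.

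I expect no substantive obstacle: the argument is essentially combinatorial-geometric. The only step requiring care is the counting in (i), which relies on the standard persistence pairing and which I would either cite directly or prove by noting that the dimension of the persistent homology in degree $\ell$ is bounded by the rank of either the $\ell$-th or $(\ell+1)$-th boundary map, hence by the dimensions of the corresponding simplex sets. The rest of the proof is manipulation of the pointwise diameter bound and linearity of expectation.
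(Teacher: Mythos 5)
Your proof is correct and follows essentially the same route as the paper: the bar count is bounded by $n_\ell(N)$ via the matrix-reduction pairing, each bar length is bounded by the diameter $2\max_i\|X_i\|$, and the expectation is handled using only the common marginal distribution, yielding the identical constant $C_{\ell,p}^{\mathrm{VR}}(N)=2^pN\,n_\ell(N)^p$. The only cosmetic difference is that you use the pointwise bound $\mathcal{L}_{\ell,\mathrm{total}}\le n_\ell(N)\cdot 2\max_i\|X_i\|$ before raising to the $p$-th power, whereas the paper applies H\"older's inequality to the sum over bars and then bounds each term; both steps produce the same final estimate.
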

\begin{proof}
Since 
\[
\mathcal{L}_{\ell,\mathrm{max}}(x) \le \mathcal{L}_{\ell,\mathrm{total}}(x),\quad x\in\mathcal{X}^N,
\]
it suffices to prove the theorem for \(\mathcal{L}_{\ell,\mathrm{total}}\) only.

Each finite bar in degree~\(\ell\) corresponds to a unique pairing between 
an \(\ell\)-simplex and an \((\ell+1)\)-simplex in the standard 
matrix-reduction algorithm~\cite{edelsbrunner2002, edelsbrunner2010}, 
implying that the number of finite bars is bounded by
\[
\#\{\text{finite bars in }H_\ell\}
\;\le\;n_\ell(N).
\]

By H\"older’s inequality we have
\begin{equation}
\label{eq:total_persistence_estimation}
\mathbb{E}\left[\mathcal{L}_{\ell,\mathrm{total}}(X)^p\right]
\le n_\ell(N)^{p-1} I,
\end{equation}
where we set
\[
I :=
\mathbb{E}\left[
\sum_{\stackrel{(b, d) \in \Pers_\ell(\Phi(X))}{d < \infty}}
(d - b)^p \right].
\]

In the case of Vietoris-Rips filtration, the filtration value of the vertex is 0 (see the definition~\eqref{eq:vr_filtration}), so we have
\begin{align*}
I &\le n_\ell(N)
\mathbb{E}\left[
\max_{\sigma_1, \sigma_2 \in K}
\left|\Phi_{\sigma_1}(X) - 
\Phi_{\sigma_2}(X)\right|^p\right] \\
& = n_\ell(N) \mathbb{E}\left[
\max_{\sigma \in K}
\left|\Phi_{\sigma}(X)\right|^p\right].
\end{align*}

Again, from the definition of the Vietoris–Rips filtration in \eqref{eq:vr_filtration}, we obtain
\begin{align*}
    I & \le n_\ell(N)\, \mathbb{E}\left[
\max_{j_1,j_2\in\{1,\ldots,N\}}
\left\Vert X_{j_1} - 
X_{j_2}\right\Vert^p\right] \\
& \le n_\ell(N)\, \mathbb{E}\left[
\max_{j_1,j_2\in\{1,\ldots,N\}}
\left(
\left\Vert X_{j_1} \right\Vert
+\left\Vert X_{j_2} \right\Vert
\right)^p\right] \\
& \le 2^{p}n_\ell(N)\, \mathbb{E}\left[
\max_{j\in\{1,\ldots,N\}}
\left\Vert X_{j}\right\Vert^p\right] \\
& \le 2^{p}n_\ell(N)\, \sum_{j=1}^N 
\mathbb{E}\left[
\left\Vert X_{j}\right\Vert^p\right]
=2^{p}Nn_\ell(N)\,\mathbb{E}\left[\left\Vert X_{1}\right\Vert^p\right].
\end{align*}
Combining the equation~\eqref{eq:total_persistence_estimation} leads to the conclusion.
\end{proof}

\subsection{Moment Bounds and Tightness under GMMs}
\label{sec:gmm}

In the previous section, we established Theorem~\ref{thm:main_estimation}, 
which provides moment bounds for persistence statistics under general distributions. 
Here we focus on the special case where the data points are drawn from a Gaussian mixture model (GMM).

\begin{remark}
It is well known that GMMs are universal approximators of probability densities on \(\mathbb{R}^d\).
That is, any probability distribution can be approximated arbitrarily well by a finite mixture of Gaussian components 
(see, e.g., \cite{mclachlan2000, li2000}).
This property justifies the use of GMMs as flexible models for complex data distributions such as attribute vectors.
\end{remark}

For integers \(d,K\ge 1\), consider a \(d\)-dimensional GMM with
mixture weights \((\pi_1,\ldots,\pi_K)\), mean vectors \(\mu_1,\ldots,\mu_K\in\mathbb{R}^d\),
and covariance matrices \(\eta\Sigma_1,\ldots,\eta\Sigma_K\), where \(\eta>0\) is a common
variance–scaling factor and \(\Sigma_1,\ldots,\Sigma_K\in\mathbb{S}_+^d\) are the
unscaled covariances. Here
\[
\mathbb{S}_+^d := \{A\in\mathbb{R}^{d\times d}: A=A^\top,\ A\succeq 0\}
\]
denotes the cone of symmetric positive semidefinite (PSD) matrices. We use the
Loewner order: \(A\succeq B\) means \(A-B\) is PSD, and \(A\succ 0\) denotes positive
definite (PD). Allowing \(\Sigma_k\succeq 0\) permits singular covariances.
Let \(P_\eta\) denote the corresponding probability distribution on \(\mathcal{X}=\mathbb{R}^d\).

Let \((\Omega,\mathcal{F},\mathbb{P})\) be a probability space.
On this probability space, we define the random vector
\[
  X^{(\eta)} = \bigl( X_1^{(\eta)}, \ldots, X_N^{(\eta)} \bigr) : 
  \Omega \to \mathcal{X}^N,
\]
where \(X_1^{(\eta)}, \ldots, X_N^{(\eta)}\) are 
\(\mathcal{X}\)-valued random variables 
sharing the same marginal distribution \(P_\eta\);
they are not required to be independent unless stated otherwise.

Throughout, we use the notation \(N(\mu_k,\Sigma_k)\) to denote the Gaussian law 
with mean \(\mu_k\) and covariance \(\Sigma_k\in\mathbb{S}_+^d\), allowing for singular \(\Sigma_k\). 

\begin{remark}[Gaussian distributions with possibly singular covariance matrices]
\label{rem:possibly_singular_covariances}

Let
\[
\Sigma_k \;=\; U\,\mathrm{diag}(\lambda_1,\ldots,\lambda_d)\,U^\top,
\qquad U^\top U=I,\ \lambda_i\ge0.
\]
(Recall that every real symmetric PSD matrix admits such a decomposition, with nonnegative eigenvalues.)
The law \(N(\mu_k,\Sigma_k)\) can be specified in any of the following equivalent ways:
\begin{description}
    \item[Principal square root.] 
    The matrix \(\Sigma_k\) admits the unique symmetric positive semidefinite square root, called the \emph{principal square root}, denoted by \(\Sigma_k^{1/2}\), and given by 
    \[
    \Sigma_k^{1/2} \;=\; U\,\mathrm{diag}\big(\sqrt{\lambda_1},\ldots,\sqrt{\lambda_d}\big)\,U^\top .
    \]
    (Although the eigenbasis \(U\) need not be unique, the product above is unique.)
    If \(Z_d\sim N(0,I_d)\), then
    \[
    \mu_k+\Sigma_k^{1/2} Z_d \;\sim\; N(\mu_k,\Sigma_k)
    \quad
    (\operatorname{Cov}(\Sigma_k^{1/2}Z_d)=\Sigma_k^{1/2}(\Sigma_k^{1/2})^\top=\Sigma_k).
    \]
    \item[Thin factor (rank–\(r_k\) representation).]
    Let \(r_k=\mathrm{rank}(\Sigma_k)\) and collect the eigenvectors corresponding to the
    \emph{positive} eigenvalues into \(U_r=[u_i:\lambda_i>0]\in\mathbb{R}^{d\times r_k}\),
    and set \(\Lambda_r=\mathrm{diag}(\lambda_i:\lambda_i>0)\). Define
    \[
    B_k \;:=\; U_r\,\Lambda_r^{1/2}\in\mathbb{R}^{d\times r_k}.
    \]
    Then \(\Sigma_k=B_k B_k^\top\), and for \(Z_{r_k}\sim N(0,I_{r_k})\) we have
    \[
    \mu_k + B_k Z_{r_k} \;\sim\; N(\mu_k,\Sigma_k)
    \quad(\operatorname{Cov}(B_k Z_{r_k})=B_k B_k^\top=\Sigma_k).
    \]
    Note that \(B_k\) is not unique: \(B_k Q\) with any orthogonal \(Q\in\mathbb{R}^{r_k\times r_k}\)
    yields the same covariance.
    \item[The characteristic function.]
    Use the characteristic
function \(\exp\big(i t^\top\mu_k-\tfrac12 t^\top\Sigma_k t\big)\) for \(t\in\mathbb{R}^d\).
\end{description}
\end{remark}

\begin{remark}[Key Features of Our Model]
\label{rem:key_features_of_gmm}
For the modeling described above, we note the following points.
\begin{enumerate}[label=(\roman*)]
\item
Throughout we work with (possibly singular) symmetric positive semidefinite covariances \(\Sigma_k\in\mathbb{S}_+^d\) and do not assume strict positive definiteness. This modeling choice accommodates effectively low-rank components that arise in practice--for instance, in the voting context, this occurs under strong substitutability among candidates, where voters treat groups of candidates as equivalent, causing their allocated scores (ratings) to co-move (e.g., approximately constant-sum) and induce near-linear dependencies. While many software implementations insert a small diagonal loading for numerical stability (e.g., \(\Sigma_k\mapsto\Sigma_k+\lambda_0 I_d\) with a small \(\lambda_0>0\)), our results are formulated for \(\Sigma_k\succeq0\) and remain valid in the vanishing regularization limit \(\lambda_0\downarrow 0\).

\item
As usual, the mixture is identifiable only up to permutation of component labels; all results below are invariant under such permutations.

\item
If \(\Sigma_k\) is singular, the \(k\)-th Gaussian component \(\mathcal N(\mu_k,\Sigma_k)\) is supported on the affine subspace \(\mu_k+\operatorname{Im}(\Sigma_k)\) (equivalently, \(\mu_k+\operatorname{Im}(\Sigma_k^{1/2})\)). Therefore
\[
\operatorname{supp}(P_\eta)=\overline{\bigcup_{k:\,\pi_k>0}\bigl(\mu_k+\operatorname{Im}(\Sigma_k)\bigr)}.
\]
In particular, \(\operatorname{supp}(P_\eta)=\mathbb{R}^d\) if at least one component has \(\Sigma_k\succ0\).
\end{enumerate}
\end{remark}

We introduce a constant that will be used in subsequent estimates.
\begin{definition}[Gaussian Moment Constant]
\label{def:gaussian_moment_constant}
Let \(p\ge1\). Define
\[
  \kappa_{d,p} := \mathbb{E}\bigl[\|Z\|^p\bigr]
  = 2^{p/2}\,\frac{\Gamma\left(\tfrac{d+p}{2}\right)}{\Gamma\left(\tfrac{d}{2}\right)},
\]
where \(Z \sim N(0, I_d)\) and \(\|\cdot\|\) is the Euclidean norm.
\end{definition}

\begin{remark}
The closed form in Definition~\ref{def:gaussian_moment_constant} follows
from the fact that \(\|Z\|\) has the \(\chi_d\) distribution, i.e., the distribution
of the square root of a \(\chi^2_d\) random variable. The \(p\)-th moment of
the \(\chi_d\) distribution is well known; see, e.g.,
\citet[Sec.~2.3]{vershynin2018} or \citet[Ch.~1]{muirhead1982},
yielding
\[
  \mathbb{E}[\|Z\|^p]
  = 2^{p/2}\,\frac{\Gamma\left(\tfrac{d+p}{2}\right)}{\Gamma\left(\tfrac{d}{2}\right)}.
\]
\end{remark}

Using this constant, we can bound the \(p\)-th moments of  general Gaussian vectors as follows.
\begin{lemma}[Moment Bound for Gaussian Vectors]
\label{lem:gaussian_moment}
Let \(p\ge1\) and let \(\mu\in\mathbb{R}^d, \Sigma\in\mathbb{S}_+^d\). If
\(Y\sim N(\mu,\Sigma)\), then
\[
\mathbb{E}\left[\|Y\|^p\right]\;\le\; 
2^{p-1}\left\{\|\mu\|^p+
\kappa_{d,p}\, \bigl(\mathrm{tr}\,\Sigma\bigr)^{p/2}\right\}.
\]
\end{lemma}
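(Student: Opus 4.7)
The plan is to reduce the bound to the $p$-th moment of a standard Gaussian vector via a triangle/convexity decomposition together with a single spectral inequality. First, I would use the principal square-root representation recalled in Remark~\ref{rem:possibly_singular_covariances} to write $Y = \mu + \Sigma^{1/2} Z$ with $Z \sim N(0, I_d)$ and $\Sigma^{1/2}\in\mathbb{S}_+^d$. The Euclidean triangle inequality then gives $\|Y\| \leq \|\mu\| + \|\Sigma^{1/2} Z\|$, and since $p \geq 1$ the convexity of $t \mapsto t^p$ on $[0,\infty)$ yields the numerical inequality $(a+b)^p \leq 2^{p-1}(a^p + b^p)$ for all $a,b \geq 0$. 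Combining these produces the pathwise estimate
\[
\|Y\|^p \leq 2^{p-1}\bigl(\|\mu\|^p + \|\Sigma^{1/2} Z\|^p\bigr),
\]
reducing the problem to bounding $\mathbb{E}[\|\Sigma^{1/2} Z\|^p]$.

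For the random term, I would again argue pathwise in $Z$. The operator-norm inequality $\|\Sigma^{1/2} Z\| \leq \|\Sigma^{1/2}\|_{\mathrm{op}}\,\|Z\|$, together with $\|\Sigma^{1/2}\|_{\mathrm{op}}^2 = \lambda_{\max}(\Sigma) \leq \mathrm{tr}\,\Sigma$ (the latter using only that $\Sigma \succeq 0$ has non-negative eigenvalues summing to its trace), gives the deterministic bound
\[
\|\Sigma^{1/2} Z\|^p \leq (\mathrm{tr}\,\Sigma)^{p/2}\,\|Z\|^p.
\]
Taking expectations and invoking $\mathbb{E}[\|Z\|^p]=\kappa_{d,p}$ from Definition~\ref{def:gaussian_moment_constant} yields $\mathbb{E}[\|\Sigma^{1/2} Z\|^p] \leq \kappa_{d,p}(\mathrm{tr}\,\Sigma)^{p/2}$. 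Substituting this into the pathwise estimate and taking expectations completes the proof.

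The argument is self-contained and remains valid in the singular-covariance regime, since $\Sigma^{1/2}$ is well defined and $\lambda_{\max}(\Sigma) \leq \mathrm{tr}\,\Sigma$ holds for every $\Sigma \succeq 0$; no strict positive-definiteness or rank assumption is required, consistent with the modeling choice of Remark~\ref{rem:key_features_of_gmm}. I do not foresee any substantive obstacle: the only mildly non-routine ingredient is the spectral comparison $\lambda_{\max}(\Sigma) \leq \mathrm{tr}\,\Sigma$, which is also the source of slack in sharp constants (for instance at $p=2$ the exact value is $\mathbb{E}[\|Y\|^2]=\|\mu\|^2+\mathrm{tr}\,\Sigma$, whereas the proposed bound yields a factor $\kappa_{d,2}=d$). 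This looseness is harmless for the subsequent GMM moment and tightness arguments, which only rely on the correct dependence on the variance-scaling factor $\eta$ through $\mathrm{tr}\,\Sigma$ and on the order $p$.
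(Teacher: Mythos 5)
Your proof is correct and follows essentially the same route as the paper's: the principal square-root representation \(Y=\mu+\Sigma^{1/2}Z\), the triangle inequality, and the convexity bound \((a+b)^p\le 2^{p-1}(a^p+b^p)\). The only cosmetic difference is that you pass through \(\|\Sigma^{1/2}Z\|\le\|\Sigma^{1/2}\|_{\mathrm{op}}\|Z\|\) with \(\lambda_{\max}(\Sigma)\le\mathrm{tr}\,\Sigma\), whereas the paper uses the Frobenius bound \(\|\Sigma^{1/2}\|_{\mathrm F}=(\mathrm{tr}\,\Sigma)^{1/2}\) directly; both yield the identical factor \((\mathrm{tr}\,\Sigma)^{p/2}\) and the same final constant.
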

\begin{proof}
Let \(Z\sim N(0,I_d)\) and write \(Y=\mu+\Sigma^{1/2}Z\), where \(\Sigma^{1/2}\) is the principal square root defined in Remark~\ref{rem:possibly_singular_covariances}. 
Then we have
\begin{align*}
\|Y\|&\le\|\mu\|+\|\Sigma^{1/2}Z\| \\
&\le \|\mu\|+\|\Sigma^{1/2}\|_{\mathrm F}\,\|Z\|
= \|\mu\|+(\mathrm{tr}\,\Sigma)^{1/2}\|Z\|,
\end{align*}
where \(\|\cdot\|_{\mathrm F}\) is the Frobenius norm on matrices, defined by
\(\|A\|_{\mathrm F}:=\sqrt{\mathrm{tr}(A^\top A)}\).

From H\"{o}lder's inequality, we have
\begin{align*}
\mathbb{E}\left[\|Y\|^p\right] 
&\le
2^{p-1}\left(\|\mu\|^p+(\mathrm{tr}\,\Sigma)^{p/2}\mathbb{E}\left[\|Z\|^p\right]\right) \\
&= 2^{p-1}\left\{\|\mu\|^p+\kappa_{d,p}(\mathrm{tr}\,\Sigma)^{p/2}\right\}.
\end{align*}
\end{proof}

\begin{lemma}[Norm Estimate for a Point in the Point Cloud]
\label{lem:random_points_norm}
For any \(p \ge 1\), there exist constants \(M_p, V_p > 0\) such that
\[
\mathbb{E}
\left[
\Vert X_{j}^{(\eta)} \Vert^p \right]
\le 
M_p + V_p \eta^{p/2},\quad 
j=1, \ldots, N,\ \eta>0.
\]
Here we can choose as follows: 
\[
M_p=2^{p-1}\sum_{k=1}^K\pi_k 
\Vert \mu_k\Vert^p, \quad
V_p=2^{p-1}\kappa_{d,p}\sum_{k=1}^K\pi_k 
(\mathrm{tr}\,\Sigma_k)^{p/2}.
\]
\end{lemma}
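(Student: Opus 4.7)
The plan is to reduce the statement to the single-Gaussian moment bound of the preceding lemma by conditioning on the mixture component. Since $X_j^{(\eta)}\sim P_\eta$ and $P_\eta$ is by construction the mixture $\sum_{k=1}^K \pi_k\,N(\mu_k,\eta\Sigma_k)$, the marginal law decomposes accordingly, so for any nonnegative measurable $f$ one has $\mathbb{E}[f(X_j^{(\eta)})]=\sum_k \pi_k\,\mathbb{E}[f(Y_k)]$ with $Y_k\sim N(\mu_k,\eta\Sigma_k)$. I would record this as the first step; note that joint dependence across $j$ plays no role because only the common marginal is needed.

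Second, I would apply Lemma~\ref{lem:gaussian_moment} to each component $Y_k$. The covariance of $Y_k$ is $\eta\Sigma_k$, so $\mathrm{tr}(\eta\Sigma_k)=\eta\,\mathrm{tr}\Sigma_k$ and therefore
\[
\mathbb{E}[\|Y_k\|^p]\;\le\;2^{p-1}\bigl\{\|\mu_k\|^p+\kappa_{d,p}\,\eta^{p/2}(\mathrm{tr}\Sigma_k)^{p/2}\bigr\}.
\]
The only nuance is that $\Sigma_k$ may be singular, but the previous lemma was stated for $\Sigma\in\mathbb{S}_+^d$ using the principal square root of Remark~\ref{rem:possibly_singular_covariances}, so the bound applies without modification.

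Third, I would take the $\pi_k$-weighted sum of the component bounds and collect the $\eta$-independent and $\eta^{p/2}$ terms separately, yielding
\[
\mathbb{E}\bigl[\|X_j^{(\eta)}\|^p\bigr]\;\le\;2^{p-1}\sum_{k=1}^K\pi_k\|\mu_k\|^p\;+\;2^{p-1}\kappa_{d,p}\Bigl(\sum_{k=1}^K\pi_k(\mathrm{tr}\Sigma_k)^{p/2}\Bigr)\eta^{p/2},
\]
which is exactly $M_p+V_p\eta^{p/2}$ with the constants displayed in the statement. The bound is uniform in $j$ because all marginals coincide.

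There is no genuine obstacle here: the result is a direct consequence of the mixture decomposition together with Lemma~\ref{lem:gaussian_moment}. The only point worth flagging is to be careful that the variance scaling enters as $\eta\Sigma_k$ inside the trace, producing the factor $\eta^{p/2}$ cleanly; and to rely on the singular-covariance formulation of the Gaussian moment lemma so that no positive-definiteness assumption on the $\Sigma_k$ is needed.
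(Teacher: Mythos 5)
Your proposal is correct and follows essentially the same route as the paper: decompose the moment via the mixture components, apply Lemma~\ref{lem:gaussian_moment} to each $N(\mu_k,\eta\Sigma_k)$ using $\mathrm{tr}(\eta\Sigma_k)=\eta\,\mathrm{tr}\,\Sigma_k$, and take the $\pi_k$-weighted sum. The remarks about singular covariances and irrelevance of dependence across $j$ are consistent with the paper's setup and add no divergence from its argument.
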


\begin{proof}
From Lemma~\ref{lem:gaussian_moment}, we have
\begin{align*}
\mathbb{E}\left[
\Vert X_{j}^{(\eta)} \Vert^p \right] 
& =
\sum_{k=1}^K\pi_k
\mathbb{E}\left[\Vert Y_k^{(\eta)}\Vert^p\right]
\quad(Y_k^{(\eta)}\sim\mathcal{N}(\mu_k,\eta\Sigma_k)) \\
& \le
2^{p-1}\sum_{k=1}^K\pi_k \left(
\Vert \mu_k\Vert^p+
\kappa_{d,p}(\mathrm{tr}\,\Sigma_k)^{p/2} \eta^{p/2}
\right)
=M_p+V_p\eta^{p/2}.
\end{align*}
This completes the proof.
\end{proof}

We now evaluate persistence statistics under variance scaling for GMMs.
\begin{theorem}[Moment Bounds of Persistence Statistics for GMMs]
\label{thm:lp_estimation_gmm}
For any \(p \ge 1\) and any \(\mathcal{L}\in\mathscr{L}_\ell\), there exist constants \(C^{(0)}_{\ell,p}(N), C^{(1)}_{\ell,p}(N) > 0\)  such that the following inequality holds:
\[
\mathbb{E}\left[\mathcal{L}(X^{(\eta)})^p\right]
\le C^{(0)}_{\ell,p}(N) + C^{(1)}_{\ell,p}(N) \eta^{p/2},\quad \eta>0.
\]
Here we can choose as follows: 
\[
C^{(0)}_{\ell,p}(N)=M_p C_{\ell,p}^{\mathrm{VR}}(N), \quad
C^{(1)}_{\ell,p}(N)=V_p C_{\ell,p}^{\mathrm{VR}}(N),
\]
where \(C_{\ell,p}^{\mathrm{VR}}(N)\) is defined in Theorem~\ref{thm:main_estimation}.

In particular, for the same \(p\) we have
\[
\sup_{\eta\ge 1}\mathbb{E}\bigl[(\mathcal{L}(X^{(\eta)})/\sqrt{\eta})^p\bigr]\le C_{\ell,p}(N),
\]
where we set
\[
C_{\ell,p}(N):=C^{(0)}_{\ell,p}(N)+C^{(1)}_{\ell,p}(N).
\]
\end{theorem}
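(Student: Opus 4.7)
The plan is to combine the general moment bound from Theorem~\ref{thm:main_estimation} with the Gaussian mixture norm estimate from Lemma~\ref{lem:random_points_norm}. Both inputs are already in place, so the proof reduces to a direct chaining of inequalities together with an elementary observation about the scaling factor $\eta^{-p/2}$.

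First I would apply Theorem~\ref{thm:main_estimation} to the random vector $X^{(\eta)}$, whose marginals all share the common law $P_\eta$. Since that theorem only requires a common marginal distribution and does not require independence, its hypotheses are satisfied, and we obtain
\[
\mathbb{E}\bigl[\mathcal{L}(X^{(\eta)})^p\bigr]\;\le\;C_{\ell,p}^{\mathrm{VR}}(N)\,\mathbb{E}\bigl[\|X_1^{(\eta)}\|^p\bigr]
\]
for any $\mathcal{L}\in\mathscr{L}_\ell$. Next I would insert the bound supplied by Lemma~\ref{lem:random_points_norm}, namely $\mathbb{E}[\|X_1^{(\eta)}\|^p]\le M_p+V_p\,\eta^{p/2}$, and distribute the factor $C_{\ell,p}^{\mathrm{VR}}(N)$. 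Identifying $C^{(0)}_{\ell,p}(N):=M_p\,C_{\ell,p}^{\mathrm{VR}}(N)$ and $C^{(1)}_{\ell,p}(N):=V_p\,C_{\ell,p}^{\mathrm{VR}}(N)$ yields the first stated inequality for all $\eta>0$.

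For the tightness conclusion, I would divide the main inequality by $\eta^{p/2}$ to obtain
\[
\mathbb{E}\bigl[(\mathcal{L}(X^{(\eta)})/\sqrt{\eta})^p\bigr]\;\le\;C^{(0)}_{\ell,p}(N)\,\eta^{-p/2}+C^{(1)}_{\ell,p}(N).
\]
Restricting to $\eta\ge 1$ gives $\eta^{-p/2}\le 1$ (recall $p\ge 1$), so the right-hand side is bounded by $C^{(0)}_{\ell,p}(N)+C^{(1)}_{\ell,p}(N)=C_{\ell,p}(N)$, uniformly in $\eta\ge 1$, which is exactly the claim.

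There is no substantive obstacle here; the proof is essentially bookkeeping once the two preceding results are available. The only point worth emphasizing is that the constants $M_p$ and $V_p$ entering the bound are the explicit quantities provided by Lemma~\ref{lem:random_points_norm}, depending only on $(\pi_k,\mu_k,\mathrm{tr}\,\Sigma_k)_{k=1}^K$ and on $\kappa_{d,p}$, but \emph{not} on $\eta$. This independence from $\eta$ is what makes the decomposition into a constant term and an $\eta^{p/2}$ term legitimate, and it is also what ultimately delivers the uniform-in-$\eta$ tightness bound after normalizing by $\sqrt{\eta}$.
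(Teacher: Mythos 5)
Your proof is correct and follows essentially the same route as the paper: apply Theorem~\ref{thm:main_estimation} to \(X^{(\eta)}\), insert the bound of Lemma~\ref{lem:random_points_norm}, identify the constants, and obtain the uniform bound by dividing by \(\eta^{p/2}\) and using \(\eta\ge1\). Your added remarks---that the theorem needs only a common marginal (not independence) and that \(M_p,V_p\) are independent of \(\eta\)---are accurate and make the bookkeeping explicit, but introduce no deviation from the paper's argument.
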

\begin{proof}
By Theorem~\ref{thm:main_estimation},
\(\mathbb{E}[\mathcal{L}(X^{(\eta)})^p]\le C_{\ell,p}^{\mathrm{VR}}(N)\,\mathbb{E}\bigl[\|X^{(\eta)}_1\|^p\bigr]\).
Lemma~\ref{lem:random_points_norm} yields
\(\mathbb{E}\bigl[\|X^{(\eta)}_1\|^p\bigr]\le M_p+V_p\,\eta^{p/2}\).
Combine them and set \(C^{(0)}_{\ell,p}(N)=M_p C_{\ell,p}^{\mathrm{VR}}(N)\),
\(C^{(1)}_{\ell,p}(N)=V_p C_{\ell,p}^{\mathrm{VR}}(N)\).
The “in particular” part follows by dividing by \(\eta^{p/2}\) and taking \(\sup_{\eta\ge1}\).
\end{proof}

By normalizing persistence statistics   
by the square root of the variance multiplier,  
we can ensure that the probability of leaving a certain compact set is sufficiently small,  
as shown below.

\begin{corollary}[Tightness of the Normalized Persistence Statistics]
\label{cor:tightness}
For any \(\mathcal{L}\in\mathscr{L}_\ell\), the family
\(\{\mathcal{L}(X^{(\eta)})/\sqrt{\eta}\}_{\eta\ge1}\) is tight on \(\mathbb{R}\).
In particular, for any \(\varepsilon>0\) there exists \(U_{\varepsilon,\ell,p}(N)>0\) such that
\[
  \sup_{\eta\ge1}\mathbb{P}\!\left(
  \frac{\mathcal{L}(X^{(\eta)})}{\sqrt{\eta}} > U_{\varepsilon,\ell,p}(N)\right)
  < \varepsilon,
\]
where we can choose
\[
U_{\varepsilon,\ell,p}(N)
:=\left(\frac{C_{\ell,p}(N)}{\varepsilon}\right)^{1/p},
\]
with \(C_{\ell,p}(N)\) defined in Theorem~\ref{thm:lp_estimation_gmm}.
\end{corollary}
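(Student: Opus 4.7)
The plan is to deduce the corollary directly from the uniform moment bound supplied by Theorem~\ref{thm:lp_estimation_gmm}, via a one-line application of Markov's inequality followed by a trivial translation into tightness. No genuinely new estimate is needed.

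First, I would note that for any $\mathcal{L}\in\mathscr{L}_\ell$ the random variable $\mathcal{L}(X^{(\eta)})$ is non-negative, since both total persistence and maximum persistence are sums or maxima of non-negative bar lengths $(d-b)$. Next, the ``in particular'' clause of Theorem~\ref{thm:lp_estimation_gmm} already gives the uniform $p$-th moment bound $\sup_{\eta\ge1}\mathbb{E}\bigl[(\mathcal{L}(X^{(\eta)})/\sqrt{\eta})^p\bigr]\le C_{\ell,p}(N)$. Applying Markov's inequality to the non-negative random variable $(\mathcal{L}(X^{(\eta)})/\sqrt{\eta})^p$ at level $U^p$ then yields
\[
\mathbb{P}\!\left(\frac{\mathcal{L}(X^{(\eta)})}{\sqrt{\eta}}>U\right)\le \frac{C_{\ell,p}(N)}{U^{p}},
\]
uniformly in $\eta\ge 1$. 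Solving $C_{\ell,p}(N)/U^{p}=\varepsilon$ for $U$ produces the explicit threshold $U_{\varepsilon,\ell,p}(N)=(C_{\ell,p}(N)/\varepsilon)^{1/p}$ claimed in the statement.

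For the tightness conclusion on $\mathbb{R}$, I would use non-negativity once more to take the compact set $K_\varepsilon:=[0,U_{\varepsilon,\ell,p}(N)]$; the one-sided Markov tail bound then controls the only relevant side, so the family $\{\mathcal{L}(X^{(\eta)})/\sqrt{\eta}\}_{\eta\ge1}$ concentrates on $K_\varepsilon$ up to mass $\varepsilon$, uniformly in $\eta$. There is essentially no substantive obstacle here: Theorem~\ref{thm:lp_estimation_gmm} has already absorbed all the real probabilistic work (persistence moment control together with the Gaussian-norm bound of Lemma~\ref{lem:random_points_norm}), so the corollary reduces to a routine Markov argument. The only points worth flagging explicitly are the use of non-negativity to pass from a one-sided tail to a two-sided compactness statement, and the remark that larger $p$ yields faster decay of $U_{\varepsilon,\ell,p}(N)$ in $\varepsilon$ but the same qualitative tightness conclusion.
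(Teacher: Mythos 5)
Your proposal is correct and follows essentially the same route as the paper: the uniform moment bound from Theorem~\ref{thm:lp_estimation_gmm} plus Markov's inequality and the choice \(U_{\varepsilon,\ell,p}(N)=(C_{\ell,p}(N)/\varepsilon)^{1/p}\), with non-negativity giving the compact set \([0,U_{\varepsilon,\ell,p}(N)]\) for tightness. Your explicit remarks on non-negativity and the one-sided tail are fine elaborations of what the paper leaves implicit, so nothing further is needed.
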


\begin{proof}
By Theorem~\ref{thm:lp_estimation_gmm},
\(\sup_{\eta\ge1}\mathbb{E}\!\left[(\mathcal{L}(X^{(\eta)})/\sqrt{\eta})^p\right]
\le C_{\ell,p}(N)<\infty\) for some \(p\ge1\).
Markov's inequality yields
\(\sup_{\eta\ge1}\mathbb{P}(\mathcal{L}(X^{(\eta)})/\sqrt{\eta}>U)
\le C_{\ell,p}(N)/U^p\).
Choosing \(U_{\varepsilon,\ell,p}(N)=(C_{\ell,p}(N)/\varepsilon)^{1/p}\) completes the proof.
\end{proof}


Since the family is tight, every sequence has a weakly convergent subsequence, as shown below.

\begin{corollary}
For any sequence \(\{\eta_n\}_{n\ge1}\subset[1,\infty)\), there exists a subsequence
\(\{\eta_{n_k}\}_{k\ge1}\) such that
\(\mathcal{L}(X^{(\eta_{n_k})})/\sqrt{\eta_{n_k}}\) converges in distribution
to some probability measure on \(\mathbb{R}\) as \(k\to\infty\).
\end{corollary}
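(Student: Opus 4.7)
The plan is to reduce this directly to Prokhorov's theorem on the real line, using the tightness already established in Corollary~\ref{cor:tightness}. Concretely, fix an arbitrary sequence $\{\eta_n\}_{n\ge 1}\subset[1,\infty)$ and consider the induced sequence of laws $\mu_n$ on $\mathbb{R}$, where $\mu_n$ is the distribution of $\mathcal{L}(X^{(\eta_n)})/\sqrt{\eta_n}$. Since each $\eta_n\ge 1$, the family $\{\mu_n\}_{n\ge 1}$ is contained in the tight family $\{\text{law of }\mathcal{L}(X^{(\eta)})/\sqrt{\eta}\}_{\eta\ge 1}$, and hence is itself tight; the quantitative tail bound from Corollary~\ref{cor:tightness}, with the explicit compact sets $[-U_{\varepsilon,\ell,p}(N),\,U_{\varepsilon,\ell,p}(N)]$ (noting that $\mathcal{L}\ge 0$), can be quoted directly as a witness of tightness.

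Next I would invoke Prokhorov's theorem (in its "tightness implies relative compactness" direction, which for Polish spaces such as $\mathbb{R}$ applies without additional hypotheses): from the tight sequence $\{\mu_n\}$ one extracts a subsequence $\{\mu_{n_k}\}$ converging weakly to some Borel probability measure $\mu$ on $\mathbb{R}$. Translating this back in terms of random variables, $\mathcal{L}(X^{(\eta_{n_k})})/\sqrt{\eta_{n_k}}\Rightarrow \mu$ as $k\to\infty$, which is the desired conclusion. The fact that the limit is a probability measure (rather than merely a sub-probability measure) is exactly what tightness guarantees in Prokhorov's theorem, so no mass escapes to infinity.

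There is essentially no serious obstacle here: the genuine mathematical content was already expended in Theorem~\ref{thm:lp_estimation_gmm} and Corollary~\ref{cor:tightness}, where the $p$-th moment bound $\sup_{\eta\ge 1}\mathbb{E}[(\mathcal{L}(X^{(\eta)})/\sqrt{\eta})^p]\le C_{\ell,p}(N)$ was used (via Markov's inequality) to control the tails. The only point worth flagging is the implicit choice of $p\ge 1$: any fixed $p$ works for tightness, and the extracted subsequence may in principle depend on $p$, but this is immaterial for the statement since we only claim existence of a weakly convergent subsequence. Thus the proof reduces to citing Corollary~\ref{cor:tightness} together with Prokhorov's theorem on $\mathbb{R}$.
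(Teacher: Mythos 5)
Your proposal is correct and follows essentially the same route as the paper: cite the tightness established in Corollary~\ref{cor:tightness} and apply Prokhorov's theorem on the Polish space \(\mathbb{R}\) to extract a weakly convergent subsequence. The added remarks on the explicit compact sets and the choice of \(p\) are fine but not needed beyond what the paper's one-line argument already contains.
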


\begin{proof}
By Corollary~\ref{cor:tightness} the family is tight; since \(\mathbb{R}\) is Polish,
Prokhorov's theorem gives relative compactness in the weak topology.
\end{proof}

To quantify the scale of the statistic, we prove that the upper tail above a constant multiple of \(\sqrt{\eta}\) is uniformly small.

\begin{corollary}[Uniform polynomial tail at the \(\sqrt{\eta}\)-scale]
\label{cor:uniform_poly_tail}
Fix \(\mathcal{L}\in\mathscr{L}_\ell\) and \(p\ge1\). 
Let \(C_{\ell,p}(N)>0\) be the constant from
Theorem~\ref{thm:lp_estimation_gmm}.
Then for all \(t>0\),
\[
\sup_{\eta\ge1}
\mathbb{P}\!\left(
  \frac{\mathcal{L}(X^{(\eta)})}{\sqrt{\eta}} \ge t
\right)
\le \frac{C_{\ell,p}(N)}{t^{\,p}}.
\]
Equivalently,
\[
\sup_{\eta\ge1}
\mathbb{P}\!\left(
  \mathcal{L}(X^{(\eta)}) \ge t\,\sqrt{\eta}
\right)
\le \frac{C_{\ell,p}(N)}{t^{\,p}}
\qquad (t>0).
\]
In particular, for all \(\epsilon\), 
taking \(t=(C_{\ell,p}(N)/\varepsilon)^{1/p}\) gives
\[\sup_{\eta\ge1}\mathbb{P}\left(\frac{\mathcal{L}(X^{(\eta)})}{\sqrt{\eta}} \ge (C_{\ell,p}(N)/\varepsilon)^{1/p}\right)
\le \varepsilon.\]
\end{corollary}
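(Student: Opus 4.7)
The plan is to reduce the statement to a single application of Markov's inequality, using the uniform $p$-th moment bound already established in Theorem~\ref{thm:lp_estimation_gmm}. The structure parallels the proof of Corollary~\ref{cor:tightness}; the only difference is that I keep $t$ as a free parameter rather than solving for the particular threshold that achieves tolerance $\varepsilon$.

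First, I would observe that $\mathcal{L}(X^{(\eta)}) \ge 0$ almost surely, since both total and maximum persistence are defined as sums or maxima over nonnegative lifespans $d - b$. Consequently $(\mathcal{L}(X^{(\eta)})/\sqrt{\eta})^p$ is a nonnegative random variable for every $p \ge 1$ and $\eta \ge 1$, which legitimizes the use of Markov's inequality. I would then invoke the ``in particular'' conclusion of Theorem~\ref{thm:lp_estimation_gmm}, which gives the uniform bound $\sup_{\eta \ge 1} \mathbb{E}[(\mathcal{L}(X^{(\eta)})/\sqrt{\eta})^p] \le C_{\ell,p}(N) < \infty$.

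Combining these two ingredients via Markov's inequality at level $t^p$, I obtain for every fixed $\eta \ge 1$ and $t > 0$,
$$
\mathbb{P}\!\left(\frac{\mathcal{L}(X^{(\eta)})}{\sqrt{\eta}} \ge t\right)
= \mathbb{P}\!\left(\left(\frac{\mathcal{L}(X^{(\eta)})}{\sqrt{\eta}}\right)^{\!p} \ge t^{p}\right)
\le \frac{\mathbb{E}[(\mathcal{L}(X^{(\eta)})/\sqrt{\eta})^p]}{t^{p}}
\le \frac{C_{\ell,p}(N)}{t^{p}}.
$$
Taking $\sup_{\eta \ge 1}$ on the left yields the first displayed inequality of the corollary. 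The equivalent formulation follows by rescaling the event $\{\mathcal{L}(X^{(\eta)})/\sqrt{\eta} \ge t\}$ into $\{\mathcal{L}(X^{(\eta)}) \ge t\sqrt{\eta}\}$, which is valid since $\sqrt{\eta} > 0$. The final ``in particular'' assertion is direct substitution of $t = (C_{\ell,p}(N)/\varepsilon)^{1/p}$, which makes the right-hand side equal to $\varepsilon$.

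There is no substantive obstacle here: the corollary is essentially a quantitative restatement of the uniform moment bound via Markov's inequality. The only points worth spelling out are nonnegativity of $\mathcal{L}(X^{(\eta)})$ and the use of the monotone map $y \mapsto y^p$ for $y \ge 0$ to translate tail events of the variable into tail events of its $p$-th power; everything else is a direct plug-in of the bound from Theorem~\ref{thm:lp_estimation_gmm}.
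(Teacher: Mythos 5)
Your proposal is correct and follows essentially the same route as the paper: both apply Markov's inequality at level $t^p$ to the nonnegative variable $\mathcal{L}(X^{(\eta)})/\sqrt{\eta}$ and then take the supremum over $\eta\ge1$; the only cosmetic difference is that you cite the uniform moment bound from the ``in particular'' part of Theorem~\ref{thm:lp_estimation_gmm} directly, whereas the paper re-derives it inline by dividing the moment bound by $\eta^{p/2}$.
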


\begin{proof}
For \(\eta\ge1\),
\[
\mathbb{E}\!\left[
  \left(\frac{\mathcal{L}(X^{(\eta)})}{\sqrt{\eta}}\right)^{\!p}
\right]
=\frac{\mathbb{E}[\mathcal{L}(X^{(\eta)})^{p}]}{\eta^{p/2}}
\le \frac{C^{(0)}_{\ell,p}(N) + C^{(1)}_{\ell,p}(N)\,\eta^{p/2}}{\eta^{p/2}} = C_{\ell,p}(N).
\]
Since \(\mathcal{L}(X^{(\eta)})\ge0\), apply the generalized Markov inequality to 
\(Y_\eta:=\mathcal{L}(X^{(\eta)})/\sqrt{\eta}\):
\[
\mathbb{P}(Y_\eta\ge t)
=\mathbb{P}(Y_\eta^{\,p}\ge t^{\,p})
\le \frac{\mathbb{E}[Y_\eta^{\,p}]}{t^{\,p}}
\le \frac{C_{\ell,p}(N)}{t^{\,p}}.
\]
Taking the supremum over \(\eta\ge1\) proves the claim.
\end{proof}

\begin{remark}[Role of moment bounds, tightness, and variance scaling]
\label{rem:role_section3}

The results of this section serve a conceptual and methodological role that
goes beyond the specific Gaussian mixture setting in which they are derived.

\medskip
\noindent
\textbf{(i) Beyond trivial global rescaling.}
While a deterministic rescaling of a point cloud trivially rescales all
Vietoris--Rips filtration values, the setting considered in
Theorem~\ref{thm:lp_estimation_gmm} is fundamentally different.
Here, the mean structure of the distribution is fixed, and only the covariance
matrices are scaled.
Thus, the observed $\sqrt{\eta}$--growth of persistence statistics reflects
increasing \emph{stochastic dispersion} around fixed reference locations,
rather than a uniform geometric dilation.
This distinction is essential in applications where scale changes arise from
heterogeneity or volatility, rather than from a deterministic rescaling of the data.

\medskip
\noindent
\textbf{(ii) Tightness and uniform tail control.}
Corollary~\ref{cor:tightness} and
Corollary~\ref{cor:uniform_poly_tail} establish that, after normalization by
$\sqrt{\eta}$, classical persistence statistics form a tight family with
uniform polynomial tail bounds.
These results provide non-asymptotic control over the magnitude and variability
of persistence-based quantities under increasing dispersion.
In particular, they justify the use of scale-normalized or bounded statistics
when comparing random point clouds whose intrinsic variability may differ.

\medskip
\noindent
\textbf{(iii) Motivation for bounded and normalized statistics.}
The variance-scaling behavior quantified in
Theorem~\ref{thm:lp_estimation_gmm} highlights a limitation of raw persistence
summaries: their magnitude is dominated by dispersion effects.
This observation motivates the construction in Section~\ref{sec:pljs_stat} of the PL+JS statistic,
which suppresses scale-driven growth by normalization and regularization, while
retaining sensitivity to changes in topological structure.
In this sense, the results of this section provide the probabilistic foundation for
why a bounded, scale-insensitive comparison of persistence diagrams is desirable
for nonparametric inference and change-point detection.

\medskip
\noindent
Taken together, the moment bounds, tightness, and uniform tail estimates clarify
the stochastic behavior of classical persistence statistics and explain their role
as a theoretical benchmark.
They do not directly enter the definition of the PL+JS statistic, but they
motivate its design and delineate the regime in which purely scale-driven
variability should be factored out.
\end{remark}

\section{A Persistence-Based Statistic for Comparing Random Point Clouds}

\subsection{Definition of the PL+JS Statistic}
\label{sec:pljs_stat}
We now introduce a scale--bounded, distribution--free statistic for comparing
persistence diagrams.
The motivation for this construction builds on the results of the preceding sections.

In Sections~3.3 and~3.4, we showed that classical persistence statistics,
such as total and maximum persistence,
typically increase with the dispersion of the underlying data distribution,
and in particular scale with the variance multiplier under Gaussian mixture models.
While this behavior is natural from a geometric viewpoint,
it implies that raw persistence quantities are inherently sensitive to global scale
and may obscure more subtle structural changes.

The PL+JS statistic is designed to mitigate this effect by normalizing persistence
landscapes and comparing them through a bounded, information--theoretic divergence
on a fixed support.
In this way, the resulting statistic suppresses purely scale--driven variation
and enables meaningful comparison of topological structure across samples,
making it suitable for nonparametric change--point detection.
No distributional assumptions (e.g., Gaussian mixtures) are required at this stage;
those models were used earlier only to motivate variance--scaling phenomena.

We fix homological degree \(\ell\in\mathbb{N}_0\), and we define a statistic for quantitatively comparing two persistence diagrams \(D, D' \in \Dgm\) for two data windows \(\mathcal{W}_1\) and \(\mathcal{W}_2\).
The construction is based on the persistence landscape representation (see Appendix~\ref{sec:PL}) and the Jensen-Shannon distance (see Appendix~\ref{sec:js_divergence}) between the associated probability density functions.

Fix \(S>0\) and let \(u_S\in\mathcal{P}([0,S])\) denote the uniform density
\begin{equation}
\label{eq:uniform_density}
  u_S(t):=\frac{1}{S}\,1_{[0,S]}(t),
\end{equation}
where
\[
  \mathcal{P}([0,S])
  :=\{p\in L^1_+([0,S]) : \int_0^S p(t)\,dt=1\}
\]
denotes the set of all probability density functions on \([0,S]\), and \(L^1_+([0,S])\) denotes the cone of nonnegative integrable functions on \([0,S]\).

For \(\ell\in\mathbb N_0\), \(M\in\mathbb N\), and \(D \in \Dgm\), let
\(\lambda_{\ell,m}(D)\colon \mathbb{R}\to[0,\infty)\) \((m=1,\ldots,M)\) denote the first \(M\) persistence landscape layers (with the convention that \(\lambda_{\ell,m}(D)\equiv 0\) if \(m\) exceeds the number of bars), and define
\(\Lambda_\ell^{(M)}\colon\Dgm\to L^1_+([0,S])\) by
\begin{equation}
\label{eq:Lambda}
  \Lambda_\ell^{(M)}(D)(t):=\sum_{m=1}^M \lambda_{\ell,m}(D)(t), \qquad t\in[0,S].
\end{equation}
Set
\begin{equation}
\label{eq:A}
  A_\ell(D):=\int_0^S \Lambda_\ell^{(M)}(D)(t)\,dt.
\end{equation}

\begin{assumption}
We assume that every diagram \(D\in\Dgm\) is supported in \([0,S]\times[0,S]\) (equivalently, \(0\le b<d\le S\) for all \((b,d)\in D\)).
\end{assumption}
Under this assumption, for all \(t\in[0,S]\) and \(D\in\Dgm\),
\begin{equation}
\label{eq:upper_and_lower_of_Lambda_and_A}
\begin{aligned}
  &0\le\Lambda_\ell^{(M)}(D)(t)\le M\min\{t,S-t\}\le\frac12 MS, \\
  &0\le A_\ell(D)\le M\!\int_0^S\!\min\{t,S-t\}\,dt=\frac{MS^2}{4}=:A_{\mathrm{max}}.
\end{aligned}
\end{equation}

For \(\theta\in(0,1]\), define the regularized normalization operator \(\mathsf{Reg}_\theta\colon L^1_+([0,S])\to\mathcal{P}([0,S])\) by
\[
  \mathsf{Reg}_\theta(f)(t):=\frac{\,f(t)+\theta A_{\mathrm{max}}\,u_S(t)\,}{\int_0^S f(s)\,ds+\theta A_{\mathrm{max}}}.
\]
For \(\gamma\in[0,1]\) and any \(p\in\mathcal{P}([0,S])\), define the \(\gamma\)-mixture operator
\(\mathsf{Mix}_\gamma\colon\mathcal{P}([0,S])\to\mathcal{P}([0,S])\) by
\[
  \mathsf{Mix}_\gamma(p):=(1-\gamma)\,p+\gamma\,u_S.
\]

\begin{definition}
\label{def:js_static}
Fix \(\ell\in\mathbb N_0\), \(M\in\mathbb{N}\), and \(S>0\).
For \(\gamma\in[0,1],\theta\in(0,1]\), define the PDF associated with \(D\) by
\[
  p_\ell^{(\gamma,\theta)}(D)
  := \mathsf{Mix}_\gamma\big(\mathsf{Reg}_\theta\big(\Lambda_\ell^{(M)}(D)\big)\big)\in\mathcal{P}([0,S]).
\]
Equivalently,
\begin{align*}
  p_\ell^{(\gamma,\theta)}(D)(t)
  &=(1-\gamma)\,\frac{\Lambda_\ell^{(M)}(D)(t)+\theta A_{\mathrm{max}}\,u_S(t)}{A_\ell(D)+\theta A_{\mathrm{max}}}
    +\gamma\,u_S(t) \\
  &=(1-\gamma)\,\frac{\Lambda_\ell^{(M)}(D)(t)}{A_\ell(D)+\theta A_{\mathrm{max}}}
    +\Bigl\{\gamma+(1-\gamma)\tfrac{\theta A_{\mathrm{max}}}{A_\ell(D)+\theta A_{\mathrm{max}}}\Bigr\}u_S(t).
\end{align*}
For two persistence diagrams \(D, D' \in \Dgm\), we define the \emph{Persistence Landscape + Jensen-Shannon} (PL+JS) statistic as
\[
  \Delta_\ell(D,D')
  := d_{\mathrm{JS}}\bigl(p_\ell^{(\gamma,\theta)}(D),\,p_\ell^{(\gamma,\theta)}(D')\bigr),
\]
where \(d_{\mathrm{JS}}\) denotes the Jensen-Shannon distance with logarithm base \(2\) (so that \(\Delta_\ell\in[0,1]\)).
\end{definition}

\begin{remark}[Interpretability, boundedness, and parameter control]
\label{rem:js_interpretability_compact}
The use of the Jensen--Shannon distance in Definition~\ref{def:js_static}
yields a statistic \(\Delta_\ell\) that is bounded in \([0,1]\),
providing a scale-free and directly interpretable measure of discrepancy
between persistence diagrams.
This boundedness is particularly advantageous for finite-sample nonparametric inference, including permutation-based testing, as it prevents a small number of extreme persistence features from dominating the statistic.

The regularization and mixing parameters \(\theta\) and \(\gamma\)
stabilize the normalization of persistence landscapes when the total
landscape mass is small.
Their influence is explicit and quantitative: as shown in
Proposition~\ref{prop:Continuity_Delta} and
Theorem~\ref{thm:continuity_T}, increasing either parameter improves
robustness by reducing the H\"older constant, while smaller values
enhance sensitivity to topological signal.
Thus, \((\gamma,\theta)\) provide principled control of the
stability--sensitivity trade-off rather than ad hoc regularization.
\end{remark}

\begin{remark}[Landscape sums versus silhouettes]
\label{rem:landscape_vs_silhouette_compact}
Persistence silhouettes aggregate all triangular functions
associated with a diagram into a single weighted average,
emphasizing long-lived features through nonlinear persistence weights
\cite{chazal2015}.
In contrast, the present work employs the unweighted sum of the first
\(M\) persistence landscape layers,
\(\Lambda_\ell^{(M)}=\sum_{m=1}^M \lambda_{\ell,m}\).

This choice is motivated by transparency, stability, and practical
controllability.
The parameter \(M\) governs the amount of topological information
included in the statistic in a monotone and easily interpretable manner,
and the resulting functional inherits a Lipschitz-type stability bound
that grows linearly in \(M\).

From a practical perspective, \(M\) can be selected in an adaptive
manner.
One may begin with a small value of \(M\), capturing only the most
dominant topological features, and assess whether structural changes
are detected.
If no significant differences are observed, \(M\) can be increased
to incorporate finer-scale features.
Since increasing \(M\) leads to higher computational cost, its choice
naturally reflects a trade-off between statistical sensitivity and
computational efficiency.

While silhouettes provide an effective low-dimensional summary,
the landscape-sum representation retains an explicit hierarchical
ordering of dominant features.
This hierarchical structure, together with the tunable parameter \(M\),
is particularly advantageous for detecting structural changes in a
statistically controlled and computationally feasible manner.
\end{remark}

\begin{remark}[Global cutoff and comparability]
\label{rem:global_S}
Fixing the cutoff \(S\) globally ensures that all probability densities
associated with persistence diagrams share the same support \([0,S]\).
As a result, Jensen--Shannon distances computed across different pairs
of diagrams are strictly comparable.
In practice, \(S\) should be chosen slightly larger than the maximal
finite death time expected in the data to avoid truncation effects.
\end{remark}

\begin{remark}[Limits of signature-based testing]
\label{rem:information_loss_compact}
Any test based on a compressed signature of persistence diagrams
necessarily identifies distributions only up to the equivalence
relation induced by that signature.
Distinct persistence diagrams may therefore become indistinguishable
after compression; this phenomenon is intrinsic to any signature-based
representation and is well illustrated, for example, by the notion of
\emph{Euler equivalence} discussed by D{\l}otko et al.~\cite{dlotko2024}
for the Euler characteristic curve.

Accordingly, the PL+JS statistic is not intended as a complete invariant
of persistence diagrams, but as a stable and inference-oriented summary.
It is designed to provide reliable power against a broad class of
structured alternatives while sacrificing sensitivity to fine-scale
differences that are often dominated by noise or sampling variability.
\end{remark}

\subsection{Continuity of the Proposed Change-Point Detection Statistic}
\label{sec:math_basis_change_point_detection}

In what follows, we reuse the notation introduced in
Section~\ref{sec:general_preliminaries}.
Let \(\mathcal{X}=\mathbb{R}^d\) with \(d\in\mathbb{N}\), where each element
of \(\mathcal{X}\) represents an individual attribute vector.
For each \(N\in\mathbb{N}\), let
\[
\mathcal{D}_\ell^{(N)} \colon \mathcal{X}^N \to \Dgm
\]
denote the map that assigns to a finite point cloud
\(x=(x_1,\dots,x_N)\) the persistence diagram in degree
\(\ell\in\mathbb{N}_0\) associated with the Vietoris--Rips filtration.
When no confusion arises, we simply write \(\mathcal{D}_\ell\) instead of
\(\mathcal{D}_\ell^{(N)}\).


Throughout, we fix a homological
degree \(\ell\in\mathbb{N}_0\), a positive integer \(M\), a global cutoff \(S>0\) and \(\gamma\in[0,1],\theta\in(0,1]\) as in
Definition~\ref{def:js_static}.

To verify the stability of the proposed PL+JS statistic with respect to variations in the observed data, we prove the continuity of the mapping 
\(T_\ell\) below. 

\begin{theorem}
\label{thm:continuity_T}
Let \(T_\ell \colon \mathcal{X}^{N_1} \times \mathcal{X}^{N_2} \to \mathbb{R}\) be the map
\[
T_\ell(x^{(1)},x^{(2)})=\Delta_\ell(\mathcal{D}_\ell(x^{(1)}), \mathcal{D}_\ell(x^{(2)})),
\]
where \(\Delta_\ell \colon \Dgm \times \Dgm \to \mathbb{R}\) is the map defined in Definition~\ref{def:js_static}. 
Equip 
\(\mathcal{X}^{N_1}\times\mathcal{X}^{N_2}\) with the metric 
\[
d_{\mathcal{X}^{N_1}\times\mathcal{X}^{N_2}}\left((x^{(1)},x^{(2)}),(\tilde x^{(1)},\tilde x^{(2)})\right)
:=d_H^{(N_1)}(x^{(1)},\tilde x^{(1)})
 +d_H^{(N_2)}(x^{(2)},\tilde x^{(2)}),
\]
where 
\[
d_H^{(N)}(x,\tilde x)
:=\max\Bigl\{
\max_{1\le i\le N}\min_{1\le j\le N}\|x_i-\tilde x_j\|,
\ \max_{1\le j\le N}\min_{1\le i\le N}\|x_i-\tilde x_j\|
\Bigr\}
\]
defines a Hausdorff-type metric on \(\mathcal{X}^N\).
Then \(T_\ell\) is \(1/2\)-H\"{o}lder continuous with respect to this metric:
\[
\left|T_\ell(x^{(1)},x^{(2)})-T_\ell(\tilde x^{(1)},\tilde x^{(2)})\right|
\le C_T
\sqrt{d_{\mathcal{X}^{N_1}\times\mathcal{X}^{N_2}}\left((x^{(1)},x^{(2)}),(\tilde x^{(1)},\tilde x^{(2)})\right)}
\]
where
\[
C_T:=\sqrt{2SC_{\phi}C_{\mathrm{Density}}},
\]
\[
C_{\phi}:=
\frac{1+\theta}{(\gamma+\theta)\log 2}\left(1+\frac{2(1-\gamma)}{\theta}\right)
,\quad
C_{\mathrm{Density}}
:= \frac{8(1-\gamma)}{S^2}
\left(\frac{1}{\theta}+\frac{1}{\theta^2}\right).
\]
\end{theorem}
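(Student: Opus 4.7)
I plan to prove the $1/2$-H\"older bound by decomposing $T_\ell$ as
\[
\mathcal{X}^{N_1}\times\mathcal{X}^{N_2} \;\xrightarrow{\mathcal{D}_\ell\times\mathcal{D}_\ell}\; \Dgm\times\Dgm \;\xrightarrow{\Lambda_\ell^{(M)}}\; L^1_+([0,S])^2 \;\xrightarrow{p_\ell^{(\gamma,\theta)}}\; \mathcal{P}([0,S])^2 \;\xrightarrow{d_{\mathrm{JS}}}\; [0,1],
\]
chaining a Lipschitz estimate at each intermediate stage; only the final stage introduces a square root, coming from the identification of $d_{\mathrm{JS}}^2$ with the Jensen--Shannon divergence (base-$2$ log), and this is the origin of the $1/2$-H\"older exponent. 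The argument starts from the triangle inequality for $d_{\mathrm{JS}}$,
\[
|T_\ell(x^{(1)},x^{(2)})-T_\ell(\tilde x^{(1)},\tilde x^{(2)})| \le d_{\mathrm{JS}}(p_1,\tilde p_1) + d_{\mathrm{JS}}(p_2,\tilde p_2),
\]
where $p_i := p_\ell^{(\gamma,\theta)}(\mathcal{D}_\ell(x^{(i)}))$ and $\tilde p_i$ analogously, so it suffices to bound each $d_{\mathrm{JS}}(p_i,\tilde p_i)^2$ linearly in $d_H^{(N_i)}(x^{(i)},\tilde x^{(i)})$.

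The four component estimates I will establish are: \emph{(i) Vietoris--Rips stability:} in the pairwise-distance convention of~\eqref{eq:vr_filtration}, $\|\Phi(x)-\Phi(\tilde x)\|_\infty \le 2\,d_H^{(N)}(x,\tilde x)$, which combined with the Cohen-Steiner-Edelsbrunner-Harer bottleneck stability theorem gives $d_B(\mathcal{D}_\ell(x),\mathcal{D}_\ell(\tilde x)) \le 2\,d_H^{(N)}(x,\tilde x)$. \emph{(ii) Landscape-sum stability:} each layer $\lambda_{\ell,m}$ is $1$-Lipschitz in $d_B$ for $\|\cdot\|_\infty$ (Bubenik), whence $\|\Lambda_\ell^{(M)}(D)-\Lambda_\ell^{(M)}(D')\|_\infty \le M\,d_B(D,D')$ and $|A_\ell(D)-A_\ell(D')| \le SM\,d_B(D,D')$. \emph{(iii) Density Lipschitz bound:} starting from the quotient identity
\[
p_\ell^{(\gamma,\theta)}(D) - p_\ell^{(\gamma,\theta)}(D') = (1-\gamma)\Bigl[\tfrac{\Lambda_\ell^{(M)}(D)-\Lambda_\ell^{(M)}(D')}{A_\ell(D')+\theta A_{\mathrm{max}}} + \tfrac{(A_\ell(D')-A_\ell(D))(\Lambda_\ell^{(M)}(D)+\theta A_{\mathrm{max}}u_S)}{(A_\ell(D)+\theta A_{\mathrm{max}})(A_\ell(D')+\theta A_{\mathrm{max}})}\Bigr],
\]
together with the uniform denominator bound $A_\ell+\theta A_{\mathrm{max}}\ge\theta A_{\mathrm{max}}$ and the pointwise bounds in~\eqref{eq:upper_and_lower_of_Lambda_and_A}, I derive $\|p_\ell^{(\gamma,\theta)}(D)-p_\ell^{(\gamma,\theta)}(D')\|_\infty \le C_{\mathrm{Density}}\,d_B(D,D')$, and hence $\|p_\ell^{(\gamma,\theta)}(D)-p_\ell^{(\gamma,\theta)}(D')\|_1 \le S\,C_{\mathrm{Density}}\,d_B(D,D')$. \emph{(iv) Jensen-Shannon to $L^1$ bound:} the regularization and mixing enforce uniform pointwise bounds $p_\ell^{(\gamma,\theta)}(D)(t)\in [p_{\min},p_{\max}]$ with $p_{\min}=(\gamma+\theta)/(S(1+\theta))$ and $p_{\max}=(1+2(1-\gamma)/\theta)/S$; applying $\ln x\le x-1$ inside each Kullback-Leibler summand of the Jensen-Shannon divergence then yields $d_{\mathrm{JS}}(p,q)^2 \le \tfrac{p_{\max}}{2\,p_{\min}\,\log 2}\|p-q\|_1 = \tfrac{C_\phi}{2}\|p-q\|_1$.

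Chaining (i)-(iv) gives $d_{\mathrm{JS}}(p_i,\tilde p_i)^2 \le S\,C_\phi\,C_{\mathrm{Density}}\,d_H^{(N_i)}(x^{(i)},\tilde x^{(i)})$; the elementary inequality $(a+b)^2\le 2(a^2+b^2)$ combined with the triangle inequality above then yields $|T_\ell(x^{(1)},x^{(2)})-T_\ell(\tilde x^{(1)},\tilde x^{(2)})|^2 \le 2SC_\phi C_{\mathrm{Density}}\,d_{\mathcal{X}^{N_1}\times\mathcal{X}^{N_2}}((x^{(1)},x^{(2)}),(\tilde x^{(1)},\tilde x^{(2)}))$, which is the claimed bound with $C_T=\sqrt{2SC_\phi C_{\mathrm{Density}}}$. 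The main technical obstacle will be step~(iv): naive bounds such as $d_{\mathrm{JS}}(p,q)^2 \le D_{\mathrm{TV}}(p,q)$ are insensitive to $(\gamma,\theta)$ and too loose to recover the prescribed constant. To obtain the explicit form of $C_\phi$ I must exploit the uniform lower bound $p_{\min}>0$ enforced by the $\gamma$-mixture with $u_S$ and the $\theta$-regularization; this is precisely where the factors $(1+\theta)/(\gamma+\theta)$ and $1+2(1-\gamma)/\theta$ enter. Step~(iii) is otherwise routine but requires careful bookkeeping of the quotient-rule expansion to recover the explicit form of $C_{\mathrm{Density}}$.
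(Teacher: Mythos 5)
Your overall architecture is sound and, in its second half, genuinely different from the paper's, but step~(i) as you state it contains a real gap. The inequality \(\|\Phi(x)-\Phi(\tilde x)\|_\infty\le 2\,d_H^{(N)}(x,\tilde x)\) is false in general: the filtration \(\Phi\) of \eqref{eq:vr_filtration} is defined simplex-by-simplex on \emph{labelled} vertices, whereas \(d_H^{(N)}\) only compares the underlying point sets and ignores the labelling. For instance, with \(N=3\), \(x=(a,a,b)\) and \(\tilde x=(a,b,b)\) where \(\|a-b\|=L>0\), one has \(d_H^{(3)}(x,\tilde x)=0\) while \(\Phi_{\{1,2\}}(x)=0\) and \(\Phi_{\{1,2\}}(\tilde x)=L\), so the sup-norm of the filtration difference is \(L\). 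Consequently you cannot invoke the Cohen--Steiner--Edelsbrunner--Harer sup-norm stability theorem in this way. The conclusion you need, \(d_B\bigl(\mathcal{D}_\ell(x),\mathcal{D}_\ell(\tilde x)\bigr)\le 2\,d_H^{(N)}(x,\tilde x)\), is true, but it requires a correspondence-based argument: choose maps between the two point sets moving each point by at most \(\varepsilon=d_H\), verify that the induced simplicial maps give a \(2\varepsilon\)-interleaving of the two Rips filtrations (with contiguity of the round trips), and apply algebraic stability. This is precisely the content of Proposition~\ref{prop:vr_stability} in the paper (equivalently, one may cite the Gromov--Hausdorff stability of Vietoris--Rips persistence due to Chazal, de Silva and Oudot), and it is a substantial part of the proof that your proposal currently skips.

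The remaining steps are correct and take a route different from the paper's. Where the paper proves joint Lipschitz continuity of \((D,D')\mapsto \mathrm{JS}\bigl(p_\ell^{(\gamma,\theta)}(D)\Vert p_\ell^{(\gamma,\theta)}(D')\bigr)\) by bounding the partial derivatives of \(\phi(u,v)=u\log_2(u/v)\) on \([a_*,a^*]^2\) (Lemma~\ref{lem5:continuity_Delta}) and then applies \(|\sqrt a-\sqrt b|\le\sqrt{|a-b|}\) (Proposition~\ref{prop:Continuity_Delta}), you instead use the quadrilateral inequality for \(d_{\mathrm{JS}}\) and bound the divergence between two \emph{nearby} densities directly, \(\mathrm{JS}(p\Vert\tilde p)\le \tfrac{C_\phi}{2}\|p-\tilde p\|_1\), via \(\ln t\le t-1\) together with the floor \(a_*\) and ceiling \(a^*\) of Lemma~\ref{lem:continuity_of_density}; your quotient expansion recovers \(C_{\mathrm{Density}}\), and the chaining with \((a+b)^2\le2(a^2+b^2)\) reproduces exactly \(C_T=\sqrt{2SC_\phi C_{\mathrm{Density}}}\). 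Be aware, though, that your first step uses the triangle inequality for \(d_{\mathrm{JS}}\), i.e., the nontrivial fact that the square root of the Jensen--Shannon divergence is a metric (Endres--Schindelin); the paper's argument never needs this, so you should state and cite it explicitly. With the Rips-stability step repaired (proved via interleaving or cited), your proposal does yield the theorem with the stated constant.
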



As a preliminary step toward proving Theorem~\ref{thm:continuity_T},  
we first verify the continuity of the map \(\Delta_\ell \colon \Dgm \times \Dgm \to \mathbb{R}\), which appears as a component of \(T_\ell\).

\begin{proposition}
\label{prop:Continuity_Delta}
Let 
\[
d_\times\bigl((D,D'),(\tilde D, \tilde D')\bigr)
:=d_B(D,\tilde D)+d_B(D',\tilde D')
\]
be the metric on \(\Dgm\times\Dgm\)
where \(d_B\) denotes the bottleneck distance (Appendix~\ref{sec:bottleneck_distance}). 
The map
\[
  \Delta_\ell \colon \Dgm \times \Dgm \to \mathbb{R},
  \qquad
  (D, D')\longmapsto
  d_{\mathrm{JS}}\bigl(p_\ell^{(\gamma,\theta)}(D),p_\ell^{(\gamma,\theta)}(D')\bigr)
\]
is \(1/2\)-H\"{o}lder continuous with respect to \(d_\times\). 
More precisely, for all \((D,D'),(\tilde D,\tilde D')\in\Dgm\times\Dgm\), we have
\begin{align*}
\left|\Delta_\ell(D,D')-\Delta_\ell(\tilde{D},\tilde{D}')\right|
&\le
C_{\Delta}
\sqrt{d_\times\bigl((D,D'),(\tilde D, \tilde D')\bigr)},
\end{align*}
where
\[
C_{\Delta}:=\sqrt{SC_{\phi}C_{\mathrm{Density}}},
\]
\[
C_{\phi}
=\frac{1+\theta}{(\gamma+\theta)\log 2}\left(1+\frac{2(1-\gamma)}{\theta}\right)
, \quad
C_{\mathrm{Density}}=\frac{8(1-\gamma)}{S^2}\Bigl(\frac{1}{\theta}+\frac{1}{\theta^2}\Bigr).
\]
\end{proposition}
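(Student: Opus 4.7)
The overall approach is to factor the H\"older estimate through three independent ingredients, one for each constant appearing in $C_\Delta$. Since $d_{\mathrm{JS}}$ is a genuine metric on $\mathcal{P}([0,S])$, the triangle inequality reduces the statement to a per-slot bound. Writing $p(D):=p_\ell^{(\gamma,\theta)}(D)$ for brevity,
\[
|\Delta_\ell(D,D')-\Delta_\ell(\tilde D,\tilde D')|\le d_{\mathrm{JS}}\bigl(p(D),p(\tilde D)\bigr)+d_{\mathrm{JS}}\bigl(p(D'),p(\tilde D')\bigr),
\]
so if each term is controlled by $\sqrt{SC_\phi C_{\mathrm{Density}}/2}\cdot\sqrt{d_B}$, then combining them via $\sqrt{a}+\sqrt{b}\le\sqrt{2(a+b)}$ produces exactly $C_\Delta=\sqrt{SC_\phi C_{\mathrm{Density}}}$ times $\sqrt{d_\times}$.

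The first ingredient is the quantitative stability of the density map $D\mapsto p(D)$. Bubenik's landscape stability gives $\|\lambda_{\ell,m}(D)-\lambda_{\ell,m}(\tilde D)\|_\infty\le d_B(D,\tilde D)$ for each layer, so summing over $m=1,\dots,M$ yields $\|\Lambda_\ell^{(M)}(D)-\Lambda_\ell^{(M)}(\tilde D)\|_\infty\le M\,d_B(D,\tilde D)$ and, by integration over $[0,S]$, $|A_\ell(D)-A_\ell(\tilde D)|\le MS\,d_B(D,\tilde D)$. The density $p(D)$ is obtained from $\Lambda_\ell^{(M)}(D)$ by the affine operator $\mathsf{Mix}_\gamma\circ\mathsf{Reg}_\theta$, and the $\gamma u_S$ mixing term cancels in the difference $p(D)-p(\tilde D)$. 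A direct quotient-rule manipulation of the remaining fraction, combined with the denominator lower bound $A_\ell(D)+\theta A_{\mathrm{max}}\ge \theta A_{\mathrm{max}}=\theta MS^2/4$ and the numerator upper bound $\|\Lambda_\ell^{(M)}(D)+\theta A_{\mathrm{max}}u_S\|_\infty\le MS/2+\theta MS/4$, produces the estimate $\|p(D)-p(\tilde D)\|_\infty\le C_{\mathrm{Density}}\,d_B(D,\tilde D)$; the two terms $1/\theta$ and $1/\theta^2$ in $C_{\mathrm{Density}}$ track the perturbations of the numerator and of the denominator respectively.

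The second ingredient turns the $L^1$ difference of densities into a bound on the squared Jensen--Shannon distance. The regularization and mixing ensure two-sided pointwise bounds on the density: using $0\le \Lambda_\ell^{(M)}(D)\le MS/2$ and $0\le A_\ell(D)\le A_{\mathrm{max}}$,
\[
\frac{\gamma+\theta}{(1+\theta)S}\le p(D)(t)\le \frac{1}{S}\left(1+\frac{2(1-\gamma)}{\theta}\right),\qquad t\in[0,S].
\]
Denote the lower and upper bounds by $c$ and $C$. Writing $m=(p(D)+p(\tilde D))/2$ and using the elementary estimate $|\log x-\log y|\le|x-y|/\min(x,y)$ together with $|p(D)-m|=|p(D)-p(\tilde D)|/2$, one bounds each Kullback--Leibler summand $\mathrm{KL}(p(D)\|m)$ and $\mathrm{KL}(p(\tilde D)\|m)$ by $(C/c)\,\|p(D)-p(\tilde D)\|_1/2$; averaging and converting from natural to base-$2$ logarithm yields $d_{\mathrm{JS}}(p(D),p(\tilde D))^2\le (C_\phi/2)\,\|p(D)-p(\tilde D)\|_1$. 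Combining with $\|p(D)-p(\tilde D)\|_1\le S\,\|p(D)-p(\tilde D)\|_\infty\le SC_{\mathrm{Density}}\,d_B(D,\tilde D)$ from the previous paragraph gives the per-slot H\"older bound with constant $\sqrt{SC_\phi C_{\mathrm{Density}}/2}$.

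Assembling these per-slot bounds through the triangle inequality from the first paragraph completes the proof. The main non-routine work lies in the density stability step: the quotient structure of $\mathsf{Reg}_\theta$ forces one to control simultaneously the pointwise perturbation of $\Lambda_\ell^{(M)}$ and the scalar perturbation of $A_\ell$, with the cross terms in a common-denominator expansion requiring careful bookkeeping against the $\theta A_{\mathrm{max}}$ regularizer. All other steps---the triangle inequality, Bubenik's stability, the two-sided density bound, and the elementary logarithm estimate---are standard, so the bookkeeping through $\mathsf{Reg}_\theta$ is where the proof really does its work.
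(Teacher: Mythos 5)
Your proposal is correct and reaches the exact constant $C_\Delta=\sqrt{SC_\phi C_{\mathrm{Density}}}$, but it assembles the estimate by a genuinely different route in the Jensen--Shannon step. The paper proves a joint Lipschitz bound for the divergence map $(D,D')\mapsto\mathrm{JS}\bigl(p_\ell^{(\gamma,\theta)}(D)\Vert p_\ell^{(\gamma,\theta)}(D')\bigr)$ with constant $C_{\mathrm{JS}}=SC_\phi C_{\mathrm{Density}}$, obtained by bounding the partial derivatives of $\phi(u,v)=u\log_2(u/v)$ on the rectangle $[a_*,a^*]^2$ and integrating over $[0,S]$, and then converts to $1/2$-H\"older continuity of the distance via $|\sqrt a-\sqrt b|\le\sqrt{|a-b|}$. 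You instead invoke the triangle inequality of $d_{\mathrm{JS}}$ (i.e., the quadrilateral bound $|d(x,y)-d(\tilde x,\tilde y)|\le d(x,\tilde x)+d(y,\tilde y)$) to reduce to single-slot perturbations, bound each slot by $\mathrm{JS}(p\Vert q)\le\tfrac{C_\phi}{2}\|p-q\|_1$ using the mean-value estimate $|\log x-\log y|\le|x-y|/\min(x,y)$ together with the same two-sided density bounds $a_*\le p\le a^*$, and recombine with $\sqrt a+\sqrt b\le\sqrt{2(a+b)}$; the checks confirm this reproduces $C_\Delta$ exactly, and your sketch of the density stability step is consistent with the paper's Lemma~\ref{lem:continuity_of_density} and its constant $C_{\mathrm{Density}}$. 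The trade-off: your argument is more elementary pointwise (no derivative bookkeeping for $\phi$, a one-line KL estimate), but it imports the nontrivial external fact that the square root of the JS divergence is a metric (Endres--Schindelin), which the paper's self-contained argument never needs; conversely, the paper's route yields the stronger intermediate statement that the JS \emph{divergence} itself is jointly Lipschitz in $(D,D')$, which is reusable elsewhere (e.g., in the discussion of how $(\gamma,\theta)$ control stability). If you keep your version, state and cite the metric property of $d_{\mathrm{JS}}$ explicitly, and spell out the common-denominator estimate for $\mathsf{Reg}_\theta$, since that is where the constant $\tfrac{8(1-\gamma)}{S^2}\bigl(\tfrac1\theta+\tfrac1{\theta^2}\bigr)$ is actually earned.
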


We split the proof into several lemmas. 

\begin{lemma}[Continuity of the Sum of Persistence Landscapes]
\label{lem:continuity_sum_of_landscapes}
The map 
\[
  \Dgm\times[0,S]\to\mathbb{R}, \quad
  (D,t)\mapsto \Lambda_\ell^{(M)}(D)(t)
\]
is continuous, where \(\Lambda_\ell^{(M)}\) is defined in Definition~\ref{eq:upper_and_lower_of_Lambda_and_A}.
Moreover, for all \(D,D'\in\Dgm\),
\begin{equation}
\label{eq:stability_bottleneck}
  \Vert\Lambda_\ell^{(M)}(D)-\Lambda_\ell^{(M)}(D')\Vert_\infty
  \le M\,d_B(D,D').
\end{equation}
\end{lemma}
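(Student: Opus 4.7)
The plan is to reduce both assertions to the classical layerwise stability of persistence landscapes due to Bubenik~\cite{bubenik2015}, which states that for each fixed $m\in\mathbb{N}$ the map $D\mapsto \lambda_{\ell,m}(D)$ satisfies
\[
\|\lambda_{\ell,m}(D)-\lambda_{\ell,m}(D')\|_\infty \;\le\; d_B(D,D'),\qquad D,D'\in\Dgm,
\]
with the convention $\lambda_{\ell,m}(D)\equiv 0$ when $m$ exceeds the number of bars (which matches the convention already adopted in the paper). Granted this, the triangle inequality immediately gives
\[
\|\Lambda_\ell^{(M)}(D)-\Lambda_\ell^{(M)}(D')\|_\infty
\;\le\; \sum_{m=1}^M\|\lambda_{\ell,m}(D)-\lambda_{\ell,m}(D')\|_\infty
\;\le\; M\,d_B(D,D'),
\]
which establishes the bound~\eqref{eq:stability_bottleneck}.

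For joint continuity of $(D,t)\mapsto\Lambda_\ell^{(M)}(D)(t)$ at an arbitrary point $(D_0,t_0)\in\Dgm\times[0,S]$, I would split the increment as
\[
|\Lambda_\ell^{(M)}(D)(t)-\Lambda_\ell^{(M)}(D_0)(t_0)|
\;\le\; \|\Lambda_\ell^{(M)}(D)-\Lambda_\ell^{(M)}(D_0)\|_\infty
      + |\Lambda_\ell^{(M)}(D_0)(t)-\Lambda_\ell^{(M)}(D_0)(t_0)|.
\]
The first term is at most $M\,d_B(D,D_0)$ by the stability estimate just proved, and hence tends to $0$ as $D\to D_0$. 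For the second term, I would recall that each layer $\lambda_{\ell,m}(D_0)$ is the pointwise $m$-th largest value of the finite family of triangular ``tent'' functions $t\mapsto\max\{0,\min(t-b,\,d-t)\}$ associated to the bars $(b,d)\in D_0$ (with zero tents coming from the diagonal). Each tent is $1$-Lipschitz in $t$, and a pointwise $k$-th order statistic of $1$-Lipschitz functions is itself $1$-Lipschitz; hence each $\lambda_{\ell,m}(D_0)$, and therefore the finite sum $\Lambda_\ell^{(M)}(D_0)$, is continuous (in fact $M$-Lipschitz) in $t$. This makes the second term vanish as $t\to t_0$, giving joint continuity.

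\textbf{Main obstacle.} The only nontrivial ingredient is the layerwise stability inequality itself, whose original proof relies on converting a bottleneck $\varepsilon$-matching between $D$ and $D'$ into a uniform $\varepsilon$-perturbation of the corresponding tent functions (unmatched bars being sent to the diagonal, where the associated tents vanish identically) and then observing that the $m$-th order-statistic operation is $1$-Lipschitz in the sup norm. Since this is a well-established fact, I would simply invoke it via \cite{bubenik2015}; the remainder of the argument is then an elementary triangle-inequality bound for the sum and a standard two-term split for the joint continuity, with no further estimates required.
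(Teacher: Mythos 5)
Your proposal is correct and follows essentially the same route as the paper: both invoke Bubenik's layerwise stability bound $\|\lambda_{\ell,m}(D)-\lambda_{\ell,m}(D')\|_\infty\le d_B(D,D')$ and sum over $m$ via the triangle inequality to get \eqref{eq:stability_bottleneck}. For joint continuity the paper factors the map through $C([0,S])$ and the (continuous) evaluation map, which is equivalent to your explicit two-term split; your added remark that each layer is $1$-Lipschitz in $t$ merely makes explicit why $\Lambda_\ell^{(M)}(D_0)$ lies in $C([0,S])$, a point the paper leaves implicit.
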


\begin{proof}
The Lipschitz continuity of each persistence landscape layer 
with respect to the bottleneck distance is a standard stability result 
(see, e.g., Bubenik~\cite{bubenik2015}). 
In particular, for each \(m\in\{1,\dots,M\}\),
\[
  \|\lambda_{\ell,m}(D)-\lambda_{\ell,m}(D')\|_\infty 
  \le d_B(D,D').
\]
Hence,
\[
  \Vert\Lambda_\ell^{(M)}(D)-\Lambda_\ell^{(M)}(D')\Vert_\infty
  \le \sum_{m=1}^M
  \Vert\lambda_{\ell,m}(D)-\lambda_{\ell,m}(D')\Vert_\infty 
  \le M\,d_B(D,D').
\]
Therefore, the map \(\Dgm \to C([0,S])\), \(D\mapsto\Lambda_\ell^{(M)}(D)\), is continuous.
Since the evaluation map 
\(C([0,S])\times[0,S]\to\mathbb{R}, (f,t)\mapsto f(t)\),
is continuous, the composition 
\((D,t)\mapsto\Lambda_\ell^{(M)}(D)(t)\)
is continuous.
\end{proof}

\begin{lemma}[Lipschitz Continuity and Uniform Bounds for the Density]
\label{lem:continuity_of_density}
The density \(p_\ell^{(\gamma,\theta)}\), defined in Definition~\ref{def:js_static}, satisfies
\begin{equation}
\label{eq:upper_lower_of_density}
  a_* \le p_\ell^{(\gamma,\theta)}(D)(t) \le a^*,
  \qquad t\in[0,S],\ D\in\Dgm,
\end{equation}
where, in particular, one may take
\[
  a_*:=\frac{\gamma+\theta}{(1+\theta)S},\qquad
  a^*:=\frac{1}{S}\Bigl(1+\frac{2(1-\gamma)}{\theta}\Bigr).
\]
Moreover, the map \(p_\ell^{(\gamma,\theta)}\colon(\Dgm,d_B)\to (C([0,S]),\|\cdot\|_\infty)\), \(D\mapsto p_\ell^{(\gamma,\theta)}(D)\), is Lipschitz continuous. More precisely, for all \(D,D'\in\Dgm\),
\begin{equation}
\label{eq:density_lipschitz}
  \|p_\ell^{(\gamma,\theta)}(D)-p_\ell^{(\gamma,\theta)}(D')\|_\infty
  \le C_{\mathrm{Density}}\, d_B(D,D'),
\end{equation}
where one may take
\[
  C_{\mathrm{Density}}:=\frac{8(1-\gamma)}{S^2}\Bigl(\frac{1}{\theta}+\frac{1}{\theta^2}\Bigr).
\]
\end{lemma}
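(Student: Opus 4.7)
The plan is to work with the compact form
$$p_\ell^{(\gamma,\theta)}(D) \;=\; (1-\gamma)\,\frac{f_D}{g_D} \;+\; \gamma\,u_S,$$
where $f_D(t):=\Lambda_\ell^{(M)}(D)(t)+\theta A_{\mathrm{max}}u_S(t)$ is the regularized numerator and $g_D:=A_\ell(D)+\theta A_{\mathrm{max}}$ is the (constant-in-$t$) regularized denominator. All quantitative control will come from the a priori bounds $0\le\Lambda_\ell^{(M)}(D)(t)\le \tfrac12 MS$ and $0\le A_\ell(D)\le A_{\mathrm{max}}=MS^2/4$ established in \eqref{eq:upper_and_lower_of_Lambda_and_A}, together with the stability statement $\|\Lambda_\ell^{(M)}(D)-\Lambda_\ell^{(M)}(D')\|_\infty\le M\,d_B(D,D')$ from Lemma~\ref{lem:continuity_sum_of_landscapes}.

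For the uniform bounds \eqref{eq:upper_lower_of_density}, I would argue pointwise in $t$. For the lower bound, discard the nonnegative landscape term in $f_D$ to get $f_D(t)\ge \theta A_{\mathrm{max}}/S$, and use $g_D\le(1+\theta)A_{\mathrm{max}}$; then $(1-\gamma)f_D/g_D\ge (1-\gamma)\theta/((1+\theta)S)$, and adding $\gamma/S$ collapses to $a_*=(\gamma+\theta)/((1+\theta)S)$. For the upper bound, use $\Lambda_\ell^{(M)}(D)(t)\le MS/2$ and $g_D\ge\theta A_{\mathrm{max}}=\theta MS^2/4$; after cancellation of $M$, the first summand is at most $(1-\gamma)(2+\theta)/(\theta S)$, and combining with $\gamma/S$ yields $a^*$ after the algebraic simplification $\gamma+(1-\gamma)(2+\theta)/\theta=1+2(1-\gamma)/\theta$.

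For the Lipschitz bound \eqref{eq:density_lipschitz}, the uniform $\gamma u_S$ term cancels, so I only need to bound $\|f_D/g_D - f_{D'}/g_{D'}\|_\infty$. I would apply the standard add-and-subtract identity
$$\frac{f_D}{g_D}-\frac{f_{D'}}{g_{D'}}=\frac{f_D-f_{D'}}{g_D}+\frac{f_{D'}\,(g_{D'}-g_D)}{g_D\,g_{D'}},$$
and use four ingredients: (a) $|f_D(t)-f_{D'}(t)|=|\Lambda_\ell^{(M)}(D)(t)-\Lambda_\ell^{(M)}(D')(t)|\le M\,d_B(D,D')$; (b) $|g_D-g_{D'}|=|A_\ell(D)-A_\ell(D')|\le SM\,d_B(D,D')$, derived by integrating the $L^\infty$ landscape stability over $[0,S]$; (c) the denominator floor $g_D,g_{D'}\ge \theta A_{\mathrm{max}}=\theta MS^2/4$; and (d) the numerator ceiling $f_{D'}(t)\le MS(2+\theta)/4$. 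Multiplying by $(1-\gamma)$, the $M$'s cancel throughout and the result simplifies via $1/\theta+(2+\theta)/\theta^2=2(\theta+1)/\theta^2$ to the claimed constant $C_{\mathrm{Density}}=8(1-\gamma)S^{-2}(1/\theta+1/\theta^2)$. Continuity of $D\mapsto p_\ell^{(\gamma,\theta)}(D)$ into $C([0,S])$ is then an immediate corollary.

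The only real obstacle is bookkeeping: verifying that $M$ cancels exactly (so the density bound depends on $M$ only through $A_{\mathrm{max}}$ implicitly and drops out), and that the two summands from the add-and-subtract identity combine cleanly into the stated $1/\theta+1/\theta^2$ form. The geometric inputs are all standard stability statements already recorded in the preceding lemma and in \eqref{eq:upper_and_lower_of_Lambda_and_A}; no new analytic ideas are needed beyond this careful algebra and the convexity of the mixture.
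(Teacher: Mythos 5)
Your proposal is correct and follows essentially the same route as the paper's proof: both rest on the a priori bounds \eqref{eq:upper_and_lower_of_Lambda_and_A}, the landscape stability bound from Lemma~\ref{lem:continuity_sum_of_landscapes}, the integral estimate \(|A_\ell(D)-A_\ell(D')|\le S\,\|\Lambda_\ell^{(M)}(D)-\Lambda_\ell^{(M)}(D')\|_\infty\), the denominator floor \(\theta A_{\mathrm{max}}\), and a quotient-difference (add-and-subtract) estimate, and they yield the identical constant \(C_{\mathrm{Density}}\). The only difference is bookkeeping: you keep the regularizer \(\theta A_{\mathrm{max}}u_S\) inside the numerator \(f_D\) and apply the quotient identity once, whereas the paper splits off the coefficient of \(u_S\) and bounds the two resulting terms (\(I_1\), \(I_2\)) separately---the algebra and the final bound are the same.
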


\begin{proof}
Note that, from \eqref{eq:upper_and_lower_of_Lambda_and_A},
\begin{equation}
\label{eq:upper_by_A_max}
  \|\Lambda_\ell^{(M)}(D)\|_\infty \le \frac{1}{2}MS = \frac{2A_{\mathrm{max}}}{S}
  \qquad\text{for all }D\in\Dgm.
\end{equation}
By Definition~\ref{def:js_static},
\[
  p_\ell^{(\gamma,\theta)}(D)(t)
  =(1-\gamma)\frac{\Lambda_\ell^{(M)}(D)(t)}{A_\ell(D)+\theta A_{\mathrm{max}}}
  +\Bigl\{\gamma+(1-\gamma)\tfrac{\theta A_{\mathrm{max}}}{A_\ell(D)+\theta A_{\mathrm{max}}}\Bigr\}u_S(t).
\]
Since \(A_\ell(D)\le A_{\mathrm{max}}\), the coefficient of \(u_S(t)\) is bounded below by
\(\gamma+(1-\gamma)\frac{\theta}{1+\theta}\). Hence
\[
  p_\ell^{(\gamma,\theta)}(D)(t)
  \ge \Bigl(\gamma+(1-\gamma)\tfrac{\theta}{1+\theta}\Bigr)\,u_S(t)
  = \frac{\gamma+\theta}{(1+\theta)S}.
\]
For the upper bound, using \eqref{eq:upper_by_A_max} and \(A_\ell(D)+\theta A_{\mathrm{max}}\ge \theta A_{\mathrm{max}}\),
\[
  p_\ell^{(\gamma,\theta)}(D)(t)
  \le (1-\gamma)\frac{\frac{2A_{\mathrm{max}}}{S}}{\theta A_{\mathrm{max}}} + u_S(t)
  = \frac{1}{S}\Bigl(\frac{2(1-\gamma)}{\theta}+1\Bigr).
\]

Next, write
\begin{align*}
&p_\ell^{(\gamma,\theta)}(D)(t)-p_\ell^{(\gamma,\theta)}(D')(t) \\
&=(1-\gamma)\!\left(\frac{\Lambda_\ell^{(M)}(D)(t)}{A_\ell(D)+\theta A_{\mathrm{max}}}
-\frac{\Lambda_\ell^{(M)}(D')(t)}{A_\ell(D')+\theta A_{\mathrm{max}}}\right) \\
&+(1-\gamma)\theta A_{\mathrm{max}}\!\left(
\frac{1}{A_\ell(D)+\theta A_{\mathrm{max}}}
-\frac{1}{A_\ell(D')+\theta A_{\mathrm{max}}}\right)u_S(t).
\end{align*}
Hence
\begin{equation}
\label{eq:density_estimation_in_the_proof}
  \|p_\ell^{(\gamma,\theta)}(D)-p_\ell^{(\gamma,\theta)}(D')\|_\infty
  \le (1-\gamma)\Bigl(I_1+\frac{\theta A_{\mathrm{max}}}{S}I_2\Bigr),
\end{equation}
where
\[
  I_1:=\left\|\frac{\Lambda_\ell^{(M)}(D)}{A_\ell(D)+\theta A_{\mathrm{max}}}
           -\frac{\Lambda_\ell^{(M)}(D')}{A_\ell(D')+\theta A_{\mathrm{max}}}\right\|_\infty,
\]
\[
  I_2:=\left| \frac{1}{A_\ell(D)+\theta A_{\mathrm{max}}}
             -\frac{1}{A_\ell(D')+\theta A_{\mathrm{max}}}\right|.
\]
From \eqref{eq:upper_by_A_max} and the triangle inequality, we have
\begin{align*}
I_1&\le
\left\|\frac{\Lambda_\ell^{(M)}(D)-\Lambda_\ell^{(M)}(D')}{A_\ell(D)+\theta A_{\mathrm{max}}}\right\|_\infty+
\left|\frac{1}{A_\ell(D)+\theta A_{\mathrm{max}}}-\frac{1}{A_\ell(D')+\theta A_{\mathrm{max}}}\right|
\left\|\Lambda_\ell^{(M)}(D')\right\|_\infty \\
&=
\frac{\left\|\Lambda_\ell^{(M)}(D)-\Lambda_\ell^{(M)}(D')\right\|_\infty}{A_\ell(D)+\theta A_{\mathrm{max}}}+
\frac{|A_\ell(D)-A_\ell(D')|}{(A_\ell(D)+\theta A_{\mathrm{max}})(A_\ell(D')+\theta A_{\mathrm{max}})}
\frac{2A_{\mathrm{max}}}{S} \\
&\le
\frac{\left\|\Lambda_\ell^{(M)}(D)-\Lambda_\ell^{(M)}(D')\right\|_\infty}{\theta A_{\mathrm{max}}}+
\frac{2|A_\ell(D)-A_\ell(D')|}{\theta^2 A_{\mathrm{max}}S}.
\end{align*}

Moreover,
\[
|A_\ell(D)-A_\ell(D')|
\le \int_0^S |\Lambda_\ell^{(M)}(D)(t)-\Lambda_\ell^{(M)}(D')(t)|\,dt
\le S\,\|\Lambda_\ell^{(M)}(D)-\Lambda_\ell^{(M)}(D')\|_\infty,
\]
whence
\begin{align*}
I_1&\le\frac{1}{A_{\mathrm{max}}}
\left(\frac{1}{\theta}+
\frac{2}{\theta^2}\right)
\left\|\Lambda_\ell^{(M)}(D)-\Lambda_\ell^{(M)}(D')\right\|_\infty, \\
I_2&\le\frac{|A_\ell(D)-A_\ell(D')|}{\theta^2 A_{\mathrm{max}}^2}
\le
\frac{S}{\theta^2 A_{\mathrm{max}}^2}\left\|\Lambda_\ell^{(M)}(D)-\Lambda_\ell^{(M)}(D')\right\|_\infty.
\end{align*}
Plugging these into \eqref{eq:density_estimation_in_the_proof} yields
\begin{equation*}
\|p_\ell^{(\gamma,\theta)}(D)-
p_\ell^{(\gamma,\theta)}(D')\|_\infty\le
\frac{2(1-\gamma)}{A_{\mathrm{max}}}
\left(\frac{1}{\theta}+\frac{1}{\theta^2}\right)
\left\|\Lambda_\ell^{(M)}(D)-\Lambda_\ell^{(M)}(D')\right\|_\infty.
\end{equation*}
Finally, by \(\|\Lambda_\ell^{(M)}(D)-\Lambda_\ell^{(M)}(D')\|_\infty \le M\,d_B(D,D')\)
(see Lemma~\ref{lem:continuity_sum_of_landscapes}) and \(M=\frac{4A_{\mathrm{max}}}{S^2}\), we obtain \eqref{eq:density_lipschitz}.
\end{proof}

\begin{lemma}[Continuity of the JS Divergence with respect to Persistence Diagrams]
\label{lem5:continuity_Delta}
Let 
\[
m(D,D')(t):=\tfrac12\bigl(p_\ell^{(\gamma,\theta)}(D)(t)+p_\ell^{(\gamma,\theta)}(D')(t)\bigr).
\] 
The JS divergence with logarithm base \(2\) can then expressed as
\begin{align*}
  \mathrm{JS}(p_\ell^{(\gamma,\theta)}(D)\Vert p_\ell^{(\gamma,\theta)}(D'))
  & := \frac12\int_0^S \phi\bigl(p_\ell^{(\gamma,\theta)}(D)(t),m(D,D')(t)\bigr)\,dt \\
  & \phantom{:}+ \frac12\int_0^S \phi\bigl(p_\ell^{(\gamma,\theta)}(D')(t),m(D,D')(t)\bigr)\,dt,
\end{align*}
where 
\[
\phi(u,v):=
u\log_2\frac{u}{v}
=\frac{1}{\log2}u\log\frac{u}{v}
, \qquad (u,v)\in(0,\infty)^2.
\]
(Throughout, \(log\) denotes the natural logarithm, and \(log_2\) denotes the base-2 logarithm.)
Let 
\[
d_\times\bigl((D,D'),(\tilde D, \tilde D')\bigr)
:=d_B(D,\tilde D)+d_B(D',\tilde D')
\]
be the metric on \(\Dgm\times\Dgm\).

\smallskip
\noindent Then the map \(\Dgm\times\Dgm\to\mathbb{R}\) defined by 
\((D,D')\longmapsto \mathrm{JS}\bigl(p_\ell^{(\gamma,\theta)}(D)\Vert p_\ell^{(\gamma,\theta)}(D')\bigr)\)
is globally Lipschitz continuous with respect to \(d_\times\):
\[
\left|\mathrm{JS}\bigl(p_\ell^{(\gamma,\theta)}(D)\Vert p_\ell^{(\gamma,\theta)}(D')\bigr)-
\mathrm{JS}\bigl(p_\ell^{(\gamma,\theta)}(\tilde{D})\Vert p_\ell^{(\gamma,\theta)}(\tilde{D}')\bigr)\right|
\le C_{\mathrm{JS}}
\, d_\times\!\bigl((D,D'),(\tilde D,\tilde D')\bigr),
\]
where 
\[
C_{\mathrm{JS}}:=SC_{\phi}C_{\mathrm{Density}},
\]
\[
C_{\phi}
=\frac{1+\theta}{(\gamma+\theta)\log 2}\left(1+\frac{2(1-\gamma)}{\theta}\right)
, \
C_{\mathrm{Density}}=\frac{8(1-\gamma)}{S^2}\Bigl(\frac{1}{\theta}+\frac{1}{\theta^2}\Bigr).
\]
\end{lemma}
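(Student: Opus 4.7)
The strategy is to reduce the global continuity of the Jensen--Shannon divergence to two ingredients that are already available: (a) the uniform two--sided density bounds $a_* \le p_\ell^{(\gamma,\theta)}(D)(t) \le a^*$ from Lemma~\ref{lem:continuity_of_density}, which confine all integrands to a compact interval bounded away from $0$, and (b) the Lipschitz estimate $\|p_\ell^{(\gamma,\theta)}(D)-p_\ell^{(\gamma,\theta)}(D')\|_\infty \le C_{\mathrm{Density}}\,d_B(D,D')$ from the same lemma. Since everything is pointwise, the proof amounts to a Lipschitz estimate for the scalar function $\phi(u,v)=u\log_2(u/v)$ on $[a_*,a^*]^2$, followed by integration over $[0,S]$.

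The first step is to identify the Lipschitz constant of $\phi$. Directly computing
\[
\partial_u\phi(u,v)=\frac{1}{\log 2}\bigl(\log(u/v)+1\bigr),\qquad
\partial_v\phi(u,v)=-\frac{u}{v\log 2},
\]
on $[a_*,a^*]^2$ we have $|\partial_v\phi|\le a^*/(a_*\log 2)$, and because $x\ge 1+\log x$ for $x>0$ we also have $|\partial_u\phi|\le (\log(a^*/a_*)+1)/\log 2\le a^*/(a_*\log 2)$. Plugging in the explicit values of $a_*$ and $a^*$ from Lemma~\ref{lem:continuity_of_density} gives $a^*/(a_*\log 2)=C_\phi$, so $\phi$ is $C_\phi$--Lipschitz in the $\ell^1$ metric on $[a_*,a^*]^2$. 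This identification is the main arithmetic step and the only place where the particular shape of $C_\phi$ in the statement is produced; the rest of the proof treats $C_\phi$ as an abstract constant.

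The second step reduces the divergence difference to pointwise Lipschitz bounds. Writing
\[
\mathrm{JS}(p_\ell^{(\gamma,\theta)}(D)\|p_\ell^{(\gamma,\theta)}(D'))
=\tfrac12\!\int_0^S\!\phi\bigl(p(D),m(D,D')\bigr)\,dt
+\tfrac12\!\int_0^S\!\phi\bigl(p(D'),m(D,D')\bigr)\,dt,
\]
and applying the triangle inequality, then the pointwise Lipschitz bound for $\phi$, one obtains an estimate of the form
\[
\tfrac{C_\phi}{2}\!\int_0^S\!\bigl(|p(D)-p(\tilde D)|+|p(D')-p(\tilde D')|+2|m(D,D')-m(\tilde D,\tilde D')|\bigr)\,dt.
\]
Since $m$ is the arithmetic mean of $p(D)$ and $p(D')$, one has $\|m(D,D')-m(\tilde D,\tilde D')\|_\infty\le \tfrac12(\|p(D)-p(\tilde D)\|_\infty+\|p(D')-p(\tilde D')\|_\infty)$, and bounding the integrand by its sup--norm and integrating over $[0,S]$ collapses the estimate to
\[
C_\phi S\bigl(\|p_\ell^{(\gamma,\theta)}(D)-p_\ell^{(\gamma,\theta)}(\tilde D)\|_\infty+\|p_\ell^{(\gamma,\theta)}(D')-p_\ell^{(\gamma,\theta)}(\tilde D')\|_\infty\bigr).
\]

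The final step invokes Lemma~\ref{lem:continuity_of_density} to replace each sup--norm difference by $C_{\mathrm{Density}}\,d_B(\cdot,\cdot)$, giving the claimed bound with $C_{\mathrm{JS}}=SC_\phi C_{\mathrm{Density}}$. The only real obstacle is Step~1: verifying that the two partial derivatives of $\phi$ on $[a_*,a^*]^2$ can both be controlled by the single constant $C_\phi$; everything else is an application of Lipschitz bounds, the triangle inequality, and the convex combination structure of $m$.
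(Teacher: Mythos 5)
Your proposal is correct and follows essentially the same route as the paper's proof: the same uniform density bounds and Lipschitz estimate from Lemma~\ref{lem:continuity_of_density}, the same partial-derivative computation showing $\phi$ is $C_\phi$-Lipschitz on $[a_*,a^*]^2$ (with $\log x + 1 \le x$ giving $C_\phi = a^*/(a_*\log 2)$), and the same decomposition exploiting that $m$ is the arithmetic mean, followed by integration over $[0,S]$ and substitution of $C_{\mathrm{Density}}\,d_B$. The intermediate bookkeeping collapses to the identical constant $C_{\mathrm{JS}} = S C_\phi C_{\mathrm{Density}}$, so no gap remains.
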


\begin{proof}
From Lemma~\ref{lem:continuity_of_density}, for all \(t\in[0,S]\) and \(D\in\Dgm\),
\[
a_*\le p_\ell^{(\gamma,\theta)}(D)(t)\le a^*,
\]
hence also 
\[
a_*\le m(D,D')(t)\le a^*
\]
for all \(D,D'\in\Dgm\). 

We show that \(\phi\) is Lipschitz continuous on the compact rectangle \([a_*,a^*]^2\) with respect to the \(1\)-norm \(\Vert(u,v)\Vert_1:=|u|+|v|\).
Since 
\[
\frac{\partial\phi}{\partial u}(u,v)=\frac1{\log2}\left(\log\frac{u}{v}+1\right),\quad
\frac{\partial\phi}{\partial v}(u,v)=-\frac{u}{v\,\log2},
\]
we have for all \(u,v\in[a_*,a^*]\), 
\begin{align*}
\left|\frac{\partial\phi}{\partial u}(u,v)\right|&\le
\frac{1}{\log2}\left[\max\left\{
\left|\log\frac{a^*}{a_*}\right|, 
\left|\log\frac{a_*}{a^*}\right|\right\}+1\right] \\
&=\frac{1}{\log2}\left(\log\frac{a^*}{a_*}+1\right)
\le\frac{a^*}{a_*\,\log2}
=\frac{1+\theta}{(\gamma+\theta)\log 2}\left\{1+\frac{2(1-\gamma)}{\theta}\right\}=C_{\phi},\\
\left|\frac{\partial\phi}{\partial v}(u,v)\right|&\le
\frac{a^*}{a_*\,\log2}=C_{\phi}.
\end{align*}
 
Therefore we have for all \((u,v),(u',v')\in[a_*,a^*]^2\),
\begin{align}
\notag
|\phi(u,v)-\phi(u',v')|
&=\left|\int_0^1\frac{\partial}{\partial t}\phi(u'+t(u-u'),v'+t(v-v'))dt\right| \\
\notag
&\le
\int_0^1\left|\frac{\partial\phi}{\partial u}(u'+t(u-u'),v'+t(v-v'))(u-u')\right|dt \\
\notag
&+
\int_0^1\left|\frac{\partial\phi}{\partial v}(u'+t(u-u'),v'+t(v-v'))(v-v')\right|dt \\
\label{eq:phi_Lipschitz}
&\le
C_{\phi}\left(|u-u'|+|v-v'|\right).
\end{align}

From the inequality \eqref{eq:density_lipschitz}, we have for all \((D,D'),(\tilde{D},\tilde{D}')\in\Dgm\times\Dgm\),
\begin{align*}
\Vert m(D,D')-m(\tilde{D},\tilde{D}')\Vert_\infty
&\le\frac12\Vert p_\ell^{(\gamma,\theta)}(D)-p_\ell^{(\gamma,\theta)}(\tilde{D})\Vert_\infty
+\frac12\Vert p_\ell^{(\gamma,\theta)}(D')-p_\ell^{(\gamma,\theta)}(\tilde{D}')\Vert_\infty \\
&\le\frac12C_{\mathrm{Density}}\left(d_B(D,\tilde{D})+d_B(D',\tilde{D}')\right).
\end{align*}
Using the inequality \eqref{eq:phi_Lipschitz}, we get 
\begin{align*}
&\left|\mathrm{JS}(p_\ell^{(\gamma,\theta)}(D)\Vert p_\ell^{(\gamma,\theta)}(D'))
-\mathrm{JS}(p_\ell^{(\gamma,\theta)}(\tilde{D})\Vert p_\ell^{(\gamma,\theta)}(\tilde{D}'))\right| \\
& \le \frac12\int_0^S \left|\phi\bigl(p_\ell^{(\gamma,\theta)}(D)(t),m(D,D')(t)\bigr)
-\phi\bigl(p_\ell^{(\gamma,\theta)}(\tilde{D})(t),m(\tilde{D},\tilde{D}')(t)\bigr)\right|\,dt \\
& + \frac12\int_0^S \left|\phi\bigl(p_\ell^{(\gamma,\theta)}(D')(t),m(D,D')(t)\bigr)
-\phi\bigl(p_\ell^{(\gamma,\theta)}(\tilde{D}')(t),m(\tilde{D},\tilde{D}')(t)\bigr)\right|\,dt \\
& \le \frac12C_{\phi}\int_0^S \left\{\left|p_\ell^{(\gamma,\theta)}(D)(t)-p_\ell^{(\gamma,\theta)}(\tilde{D})(t)\right|
+\left|m(D,D')(t)-m(\tilde{D},\tilde{D}')(t)\right|\right\}\,dt \\
& + \frac12C_{\phi}\int_0^S \left\{\left|p_\ell^{(\gamma,\theta)}(D')(t)-p_\ell^{(\gamma,\theta)}(\tilde{D}')(t)\right|
+\left|m(D,D')(t)-m(\tilde{D},\tilde{D}')(t)\right|\right\}\,dt \\
& \le \frac12SC_{\phi}\left(\left\Vert p_\ell^{(\gamma,\theta)}(D)-p_\ell^{(\gamma,\theta)}(\tilde{D})\right\Vert_\infty
+\left\Vert m(D,D')-m(\tilde{D},\tilde{D}')\right\Vert_\infty\right) \\
& + \frac12SC_{\phi}\left(\left\Vert p_\ell^{(\gamma,\theta)}(D')-p_\ell^{(\gamma,\theta)}(\tilde{D}')\right\Vert_\infty
+\left\Vert m(D,D')-m(\tilde{D},\tilde{D}')\right\Vert_\infty\right) \\
& \le \frac12SC_{\phi}C_{\mathrm{Density}}\left\{d_B(D,\tilde{D})
+\frac12\left(d_B(D,\tilde{D})+d_B(D',\tilde{D}')\right)\right\} \\
& + \frac12SC_{\phi}C_{\mathrm{Density}}\left\{d_B(D',\tilde{D}')
+\frac12\left(d_B(D,\tilde{D})+d_B(D',\tilde{D}')\right)\right\} \\
& = SC_{\phi}C_{\mathrm{Density}}\left(d_B(D,\tilde{D})+d_B(D',\tilde{D}')\right).
\end{align*}
This shows global Lipschitz continuity (hence continuity).
\end{proof}

\begin{proof}[Proof of Proposition~\ref{prop:Continuity_Delta}]

Since we have 
\[
|\sqrt{a}-\sqrt{b}|\le\sqrt{|a-b|}
\]
for all \(a,b\ge0\), applying Lemma~\ref{lem5:continuity_Delta}, we have for all \((D,D'),(\tilde{D},\tilde{D}')\in \Dgm\times\Dgm\),
\begin{align*}
\left|\Delta_\ell(D,D')-\Delta_\ell(\tilde{D},\tilde{D}')\right|
&=
\left|d_{\mathrm{JS}}(p_\ell^{(\gamma,\theta)}(D), p_\ell^{(\gamma,\theta)}(D'))-d_{\mathrm{JS}}(p_\ell^{(\gamma,\theta)}(\tilde{D}), p_\ell^{(\gamma,\theta)}(\tilde{D}'))\right|\\
&=
\left|\sqrt{\mathrm{JS}(p_\ell^{(\gamma,\theta)}(D)\Vert p_\ell^{(\gamma,\theta)}(D'))}-\sqrt{\mathrm{JS}(p_\ell^{(\gamma,\theta)}(\tilde{D})\Vert p_\ell^{(\gamma,\theta)}(\tilde{D}'))}\right|\\
&\le
\sqrt{\left|\mathrm{JS}(p_\ell^{(\gamma,\theta)}(D),p_\ell^{(\gamma,\theta)}(D'))-\mathrm{JS}(p_\ell^{(\gamma,\theta)}(\tilde{D})\Vert p_\ell^{(\gamma,\theta)}(\tilde{D}'))\right|}\\
&\le
\sqrt{C_{\mathrm{JS}}d_\times((D,D'),(\tilde D,\tilde D'))}
=C_\Delta\sqrt{d_\times((D,D'),(\tilde D,\tilde D'))}.
\end{align*}
This proves the \(1/2\)-H\"{o}lder continuity of \(\Delta_\ell\).
\end{proof}

Before proving Theorem~\ref{thm:continuity_T}, we prepare a standard stability
result for Vietoris--Rips persistence diagrams with respect to the Hausdorff
distance. This result will serve as a key technical ingredient in the proof.
\begin{proposition}[Stability of Vietoris--Rips persistence diagrams]
\label{prop:vr_stability}
Let \(X,Y\subset\mathbb R^d\) be nonempty finite sets, and let
\(\mathrm{VR}(X)=\{\mathrm{VR}(X,r)\}_{r\ge0}\) and
\(\mathrm{VR}(Y)=\{\mathrm{VR}(Y,r)\}_{r\ge0}\) denote the Vietoris--Rips
filtrations parametrized by the diameter, i.e.,
\[
\mathrm{VR}(X,r)
:=\{\sigma\subset X \mid \max_{x,x'\in\sigma}\|x-x'\|\le r\}.
\]
Then, for each homological degree \(\ell\ge0\), the corresponding persistence
diagrams satisfy
\[
d_B\bigl(D_\ell(\mathrm{VR}(X)),D_\ell(\mathrm{VR}(Y))\bigr)
\;\le\;
2\,d_H(X,Y),
\]
where \(d_B\) denotes the bottleneck distance and \(d_H\) the Hausdorff distance.
\end{proposition}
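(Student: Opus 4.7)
The plan is to reduce the inequality to the algebraic stability theorem for persistence modules by constructing an explicit $2\epsilon$-interleaving of the two Vietoris--Rips filtrations, where $\epsilon:=d_H(X,Y)$. Since $X$ and $Y$ are nonempty and finite, I would begin by choosing nearest-neighbor maps $\phi\colon X\to Y$ and $\psi\colon Y\to X$ satisfying $\|x-\phi(x)\|\le\epsilon$ and $\|y-\psi(y)\|\le\epsilon$ for all $x\in X$ and $y\in Y$; these exist by the definition of the Hausdorff distance together with finiteness.

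The next step is to show that $\phi$ extends to a simplicial map $\mathrm{VR}(X,r)\to\mathrm{VR}(Y,r+2\epsilon)$ for every $r\ge 0$, and symmetrically for $\psi$. If $\sigma=\{x_{0},\ldots,x_{k}\}\in\mathrm{VR}(X,r)$, then for each pair $i,j$ the triangle inequality gives
\[
\|\phi(x_{i})-\phi(x_{j})\|
\le\|\phi(x_{i})-x_{i}\|+\|x_{i}-x_{j}\|+\|x_{j}-\phi(x_{j})\|
\le r+2\epsilon,
\]
so $\phi(\sigma)\in\mathrm{VR}(Y,r+2\epsilon)$; the estimate for $\psi$ is identical.

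I would then verify the interleaving compatibility by showing that $\psi\circ\phi$ is contiguous, as a simplicial map $\mathrm{VR}(X,r)\to\mathrm{VR}(X,r+4\epsilon)$, to the structural inclusion. Since $\|x-\psi(\phi(x))\|\le 2\epsilon$ for each $x\in X$, a triangle-inequality check shows that every pairwise distance within $\sigma\cup\psi(\phi(\sigma))$ is at most $r+4\epsilon$, so the union itself is a simplex of $\mathrm{VR}(X,r+4\epsilon)$. Contiguous simplicial maps induce equal maps on simplicial homology, so $\psi\circ\phi$ and the inclusion agree on $H_\ell$; the symmetric argument for $\phi\circ\psi$ supplies the other half of a $2\epsilon$-interleaving between the persistence modules $H_\ell(\mathrm{VR}(X))$ and $H_\ell(\mathrm{VR}(Y))$.

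Finally, I would invoke the algebraic stability theorem of Cohen-Steiner--Edelsbrunner--Harer~\cite{cohen2007}, in its persistence-module formulation, which converts a $\delta$-interleaving into the bottleneck bound $d_B\le\delta$. Taking $\delta=2\epsilon=2\,d_H(X,Y)$ yields the stated inequality. The main technical step is the contiguity verification, since three separate triangle-inequality estimates must be combined and one must also track the commutativity of $\phi$ and $\psi$ with the structure maps across different values of $r$; once these are in place, the passage from interleaving to bottleneck distance is a black-box application of the stability theorem.
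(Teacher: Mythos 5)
Your proposal is correct and follows essentially the same route as the paper's own proof: nearest-neighbor maps from the Hausdorff distance, simplicial maps shifting the scale by $2\varepsilon$, contiguity of the compositions with the inclusions to obtain a $2\varepsilon$-interleaving, and then the algebraic stability theorem of Cohen-Steiner--Edelsbrunner--Harer. The only detail the paper spells out that you treat implicitly is the commutativity of the interleaving maps with the structure maps (immediate here since the maps do not depend on $r$) and the q-tameness of the modules needed to invoke the stability theorem.
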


\begin{proof}
First, set \(\varepsilon:=d_H(X,Y)\).

By the definition of the Hausdorff distance, for any \(x\in X\) there exists
a point \(y\in Y\) such that \(\|x-y\|\le\varepsilon\).
For each \(x\in X\), choose one such point and define \(f(x)\in Y\).
Similarly, for each \(y\in Y\), choose a point \(x\in X\) satisfying
\(\|y-x\|\le\varepsilon\) and define \(g(y)\in X\).
Then it follows that
\(\|x-f(x)\|\le\varepsilon\) for all \(x\in X\), and
\(\|y-g(y)\|\le\varepsilon\) for all \(y\in Y\).

Fix an arbitrary \(r\ge0\).

Take an arbitrary simplex \(\{x_0,\dots,x_k\}\subset X\) contained in
\(\mathrm{VR}(X,r)\).
For any \(i,j\), by the definition of \(\mathrm{VR}(X,r)\) we have
\(\|x_i-x_j\|\le r\), and hence by the triangle inequality,
\[
\|f(x_i)-f(x_j)\|
\le \|f(x_i)-x_i\|+\|x_i-x_j\|+\|x_j-f(x_j)\|
\le r+2\varepsilon.
\]
Therefore, \(\{f(x_0),\ldots,f(x_k)\}\) is a simplex in
\(\mathrm{VR}(Y,r+2\varepsilon)\).
This correspondence defines a \emph{simplicial map}
\[
F_r:\mathrm{VR}(X,r)\longrightarrow \mathrm{VR}(Y,r+2\varepsilon),
\]
and similarly a simplicial map
\[
G_r:\mathrm{VR}(Y,r)\longrightarrow \mathrm{VR}(X,r+2\varepsilon).
\]

Let \(r'\ge r\) be arbitrary.
Note that for each simplex \(\sigma=\{x_0,\ldots,x_k\}\in\mathrm{VR}(X,r)\),
we define
\[
F_r(\sigma):=\{f(x_0),\ldots,f(x_k)\}.
\]
Since this definition does not depend on \(r\), for
\(\sigma\in\mathrm{VR}(X,r')\) we also have
\[
F_{r'}(\sigma)=\{f(x_0),\ldots,f(x_k)\}.
\]

Consequently,
\[
F_{r'}\bigl(\iota_{r',r}^{(X)}(\sigma)\bigr)
=F_{r'}(\sigma)
=\{f(x_0),\ldots,f(x_k)\}.
\]
On the other hand,
\[
\iota_{r'+2\varepsilon,r+2\varepsilon}^{(Y)}\bigl(F_r(\sigma)\bigr)
=\iota_{r'+2\varepsilon,r+2\varepsilon}^{(Y)}
\bigl(\{f(x_0),\ldots,f(x_k)\}\bigr)
=\{f(x_0),\ldots,f(x_k)\}.
\]
Hence, the following diagram commutes for the family \(\{F_r\}_{r\ge0}\):
\[
\begin{tikzcd}[column sep=huge]
\mathrm{VR}(X,r) \arrow[r,"\iota_{r',r}^{(X)}"] \arrow[d,"F_r"'] 
& \mathrm{VR}(X,r') \arrow[d,"F_{r'}"] \\
\mathrm{VR}(Y,r+2\varepsilon) 
  \arrow[r,"\iota_{r'+2\varepsilon,r+2\varepsilon}^{(Y)}"] 
& \mathrm{VR}(Y,r'+2\varepsilon).
\end{tikzcd}
\]
By the same argument, since the definition of \(G_r\) does not depend on the
parameter \(r\), the family \(\{G_r\}_{r\ge0}\) also satisfies the corresponding
commutativity condition.
More precisely, for any \(r'\ge r\), the diagram
\[
\begin{tikzcd}[column sep=huge]
\mathrm{VR}(Y,r) \arrow[r,"\iota_{r',r}^{(Y)}"] \arrow[d,"G_r"'] 
& \mathrm{VR}(Y,r') \arrow[d,"G_{r'}"] \\
\mathrm{VR}(X,r+2\varepsilon) 
  \arrow[r,"\iota_{r'+2\varepsilon,r+2\varepsilon}^{(X)}"] 
& \mathrm{VR}(X,r'+2\varepsilon)
\end{tikzcd}
\]
commutes.

Next, for any simplex \(\sigma=\{x_0,\ldots,x_k\}\in\mathrm{VR}(X,r)\),
the equality
\[
\iota_{r+4\varepsilon,r}^{(X)}(\sigma)\cup(G_{r+2\varepsilon}\circ F_r)(\sigma)
=\sigma\cup(G_{r+2\varepsilon}\circ F_r)(\sigma)
\]
defines a simplex in \(\mathrm{VR}(X,r+4\varepsilon)\).
Indeed, this follows from the estimates
\begin{align*}
&\|x_i-g(f(x_i))\|\le\|x_i-f(x_i)\|+\|f(x_i)-g(f(x_i))\|\le2\varepsilon,\\
&\|x_i-g(f(x_j))\|\le\|x_i-x_j\|+\|x_j-f(x_j)\|+\|f(x_j)-g(f(x_j))\|\\
&\quad\le r+\varepsilon+\varepsilon=r+2\varepsilon,\\
&\|g(f(x_i))-g(f(x_j))\|\le\|g(f(x_i))-f(x_i)\|\\
&\quad+\|f(x_i)-f(x_j)\|+\|f(x_j)-g(f(x_j))\|\\
&\quad\le \varepsilon+(r+2\varepsilon)+\varepsilon=r+4\varepsilon.
\end{align*}
Therefore, the composite map
\[
G_{r+2\varepsilon}\circ F_r\colon\mathrm{VR}(X,r)\to\mathrm{VR}(X,r+4\varepsilon)
\]
is \emph{contiguous} to the inclusion map
\(\iota_{r+4\varepsilon,r}^{(X)}\) in \(\mathrm{VR}(X,r+4\varepsilon)\).
Similarly, the composite map
\[
F_{r+2\varepsilon}\circ G_r\colon\mathrm{VR}(Y,r)\to\mathrm{VR}(Y,r+4\varepsilon)
\]
is contiguous to the inclusion
\(\iota_{r+4\varepsilon,r}^{(Y)}\) in \(\mathrm{VR}(Y,r+4\varepsilon)\).

In general, two contiguous simplicial maps are homotopic on geometric
realizations and therefore induce identical maps on homology
\cite{edelsbrunner2010}.
Consequently, the homomorphism induced by
\(G_{r+2\varepsilon}\circ F_r\),
\[
(G_{r+2\varepsilon}\circ F_r)_*\colon
H_\ell(\mathrm{VR}(X,r))\to H_\ell(\mathrm{VR}(X,r+4\varepsilon)),
\]
coincides with
\[
(\iota_{r+4\varepsilon,r}^{(X)})_*\colon
H_\ell(\mathrm{VR}(X,r))\to H_\ell(\mathrm{VR}(X,r+4\varepsilon)).
\]
Similarly, the homomorphism induced by
\(F_{r+2\varepsilon}\circ G_r\),
\[
(F_{r+2\varepsilon}\circ G_r)_*\colon
H_\ell(\mathrm{VR}(Y,r))\to H_\ell(\mathrm{VR}(Y,r+4\varepsilon)),
\]
coincides with
\[
(\iota_{r+4\varepsilon,r}^{(Y)})_*\colon
H_\ell(\mathrm{VR}(Y,r))\to H_\ell(\mathrm{VR}(Y,r+4\varepsilon)).
\]

Hence, the persistence modules
\(\mathbb{X}:=\{H_\ell(\mathrm{VR}(X,r))\}_{r\ge0}\) and
\(\mathbb{Y}:=\{H_\ell(\mathrm{VR}(Y,r))\}_{r\ge0}\)
are \emph{\(2\varepsilon\)-interleaved}, and their interleaving distance
is at most \(2\varepsilon\).

Since Vietoris--Rips filtrations on finite point sets are finitely generated
in each degree, the persistence modules \(\mathbb{X}\) and \(\mathbb{Y}\)
are q-tame.
Therefore, by the stability theorem of persistence diagrams
\cite{cohen2007}, the bottleneck distance \(d_B\) and the interleaving
distance \(d_I\) satisfy
\[
d_B\bigl(D_\ell(\mathrm{VR}(X)),D_\ell(\mathrm{VR}(Y))\bigr)
\le d_I(\mathbb{X},\mathbb{Y}),
\]
and hence
\[
d_B\bigl(D_\ell(\mathrm{VR}(X)),D_\ell(\mathrm{VR}(Y))\bigr)
\le 2\varepsilon
=2\,d_H(X,Y).
\]
This completes the proof.
\end{proof}

\begin{remark}
The factor \(2\) arises from the diameter parametrization of the
Vietoris--Rips filtration.
For the \v{C}ech filtration or for a radius-normalized
Vietoris--Rips filtration, the corresponding stability bound holds with
constant \(1\).
\end{remark}

\begin{proof}[Proof of Theorem~\ref{thm:continuity_T}]


Applying Proposition~\ref{prop:Continuity_Delta} and Proposition~\ref{prop:vr_stability} gives
\begin{align*}
&\left|T_\ell(x^{(1)},x^{(2)})-T_\ell(\tilde x^{(1)},\tilde x^{(2)})\right| \\
&=
\left|
\Delta_\ell\bigl(\mathcal{D}_\ell(x^{(1)}),\mathcal{D}_\ell(x^{(2)})\bigr)-
\Delta_\ell\bigl(\mathcal{D}_\ell(\tilde x^{(1)}),\mathcal{D}_\ell(\tilde x^{(2)})\bigr)
\right| \\
&\le
C_\Delta\sqrt{
d_B\bigl(\mathcal{D}_\ell(x^{(1)}),\mathcal{D}_\ell(\tilde x^{(1)})\bigr)
+d_B\bigl(\mathcal{D}_\ell(x^{(2)}),\mathcal{D}_\ell(\tilde x^{(2)})\bigr)}\\
&\le 
\sqrt{2}C_\Delta\sqrt{
d_H^{(N_1)}(x^{(1)},\tilde x^{(1)})
+d_H^{(N_2)}(x^{(2)},\tilde x^{(2)})
}.
\end{align*}
This proves the \(1/2\)-H\"{o}lder continuity of \(T_\ell\).
\end{proof}

\begin{remark}[Practical choice of \(\gamma\) and \(\theta\)]
\label{rem:choice_gamma_theta}
The continuity constant in Theorem~\ref{thm:continuity_T} reads
\[
C_T=\sqrt{2S\,C_{\phi}\,C_{\mathrm{Density}}},
\]
\[
C_{\phi}=\frac{1+\theta}{(\gamma+\theta)\log 2}\!\left(1+\frac{2(1-\gamma)}{\theta}\right),
\quad
C_{\mathrm{Density}}=\frac{8(1-\gamma)}{S^2}\!\left(\frac{1}{\theta}+\frac{1}{\theta^2}\right).
\]
Moreover, Lemma~\ref{lem:continuity_of_density} yields uniform bounds
\(a_* \le p_\ell^{(\gamma,\theta)}(D)(t) \le a^*\) with
\(a_*=\tfrac{\gamma+\theta}{(1+\theta)S}\) and
\(a^*=\tfrac{1}{S}\bigl(1+\tfrac{2(1-\gamma)}{\theta}\bigr)\).

\medskip
\noindent\textit{Monotonicity and trade-offs.}
A direct calculus check shows that, on \((0,1)\), both \(C_{\phi}\) and \(C_{\mathrm{Density}}\) are
strictly decreasing in each of \(\gamma\) and \(\theta\).
Hence \(C_T\) decreases as either parameter increases (greater robustness),
while the lower bound \(a_*\) increases with \(\gamma\) or \(\theta\) (a stronger floor).
At the extremes:
\begin{itemize}
  \item \(\theta\to 0^+\): \(C_{\phi}\), \(C_{\mathrm{Density}}\) and \(C_T\) blow up; the statistic becomes numerically unstable and overly sensitive.
  \item \(\gamma\to 1^-\): \(p_\ell^{(\gamma,\theta)}(D)\to u_S\) and \(\Delta_\ell\equiv 0\); discrimination vanishes.
\end{itemize}

\medskip
\noindent\textit{Parameter management strategy.}
At the pre-processing stage, we set \(S=1\) by normalizing all attribute vectors with the maximum Euclidean distance among the remaining vectors (i.e., those after excluding outliers).
The parameter domains are set to \(\theta\in(0,1]\) and \(\gamma\in[0,1]\);
note that \(\gamma=1\) corresponds to the uniform density \(u_S\),
for which \(\Delta_\ell\equiv0\).
As an efficient rule for parameter management, we basically recommend fixing \(\gamma=0\) 
and controlling the regularization parameter \(\theta\) so that the density floor \(a_*\) 
remains above a specified threshold \(a_{\min}\).
Given the definition
\[
a_*=\frac{\theta}{1+\theta},
\]
the condition \(a_*\ge a_{\min}\) is equivalent to
\[
\theta\ge\frac{a_{\min}}{1-a_{\min}}=: \theta_{\min}.
\]
Therefore, by choosing \(\theta\ge\theta_{\min}\), one can maintain \(a_*\ge a_{\min}\) 
even with \(\gamma=0\).

In some situations, however, such as small-sample or highly concentrated data, 
the normalized mass \(\mu:=A_\ell(D)/A_{\max}\) may become very small.
To preserve sensitivity in such cases, it may be desirable to set \(\theta<\theta_{\min}\).
Then, to maintain \(a_*\ge a_{\min}\), a nonzero \(\gamma\) is required.
The condition
\[
a_*=\frac{\gamma+\theta}{1+\theta}\ge a_{\min}
\]
is equivalent to
\[
\gamma\ge a_{\min}-(1-a_{\min})\theta=:\gamma_{\min}.
\]
Hence, one can choose \(\gamma\ge\gamma_{\min}\) 
to ensure the desired floor level. 
To enhance discrimination, it is better to set \(\gamma\) to \(\gamma_{\min}\), but if you want to increase \(C_T\) and improve stability, you can achieve this by setting a larger \(\gamma\).

\medskip
\noindent
\textit{Numerical example.}
If \(a_{\min}=0.01\), then
\[
\theta_{\min}=\frac{0.01}{1-0.01}=0.\overline{01}.
\]
A simple practical recommendation is to use 
\((\gamma, \theta)=(0, 0.011)\) as a default configuration.
When a smaller \(\theta<0.\overline{01}\) is required,
one may estimate \(\mu\) (e.g., by the median of a held-out subset) 
and set \(\theta=\mu/10\) together with
\(\gamma\ge\gamma_{\min}=0.01-0.99\,\theta\)
so that \(a_*\ge a_{\min}\) is preserved.
This rule keeps the regularization minimal while ensuring numerical stability.
\end{remark}

\section{Numerical Illustration: Dynamic Point Clouds}
\label{sec:numerical_illustration}
Before proceeding to the numerical experiment, we briefly outline the motivation 
for analyzing the Gitcoin voting data from the perspective of topological data analysis (TDA).
The Gitcoin platform is part of the broader \emph{Web3} ecosystem, 
in which community governance is often implemented through 
\emph{decentralized autonomous organizations} (DAOs). 
Unlike traditional organizations with centralized administrators, 
a DAO operates on a blockchain and makes collective decisions 
through transparent, rule-based voting mechanisms encoded in smart contracts. 
Among these, \emph{quadratic voting} (QV) has become a key method: 
each participant is free to allocate votes across multiple candidates, 
but the cost of casting \(v\) votes increases quadratically (proportional to \(v^2\)). 
This design allows voters to express the \emph{intensity} of their preferences, 
while imposing a nonlinear budget constraint that encourages diversity of expression 
and prevents domination by large stakeholders.

The resulting voting data form high-dimensional vectors with rich nonlinear relationships 
between candidates, reflecting complex collective behaviors within the DAO community.
Detecting structural changes in such high-dimensional, nonlinear patterns 
is crucial for understanding shifts in community preferences or governance dynamics. 
Because these relationships are inherently geometric and topological in nature, 
TDA provides an appropriate framework for capturing their evolution. 
In particular, persistence-based statistics allow us to quantify 
changes in the topology—such as the emergence or disappearance of clusters or loops—
within the cloud of vote vectors over time.

In this section, we conduct empirical verification using the data from the
\emph{Gitcoin Steward Council Elections v3} (July 18--25, 2023). (See Appendix~\ref{sec:empirical_voting} for more information about this data.) 
The data come from a quadratic voting event conducted over eight days, during which multiple voters cast votes daily for a common set of ten candidates.
The ordered sequence of scores that each voter cast for each candidate is referred to as a "vote vector," and is treated as a 10-dimensional attribute vector.
Each day within the data period is treated as a separate time window,
resulting in eight windows in total.
For every pair of adjacent windows, we test whether the corresponding vote vector distributions are generated from the same underlying distribution,
following the Monte Carlo permutation-based procedure described in Section~\ref{sec:change_point_methods}.

\begin{description}
    \item[Null hypothesis]  
    The PL+JS statistics computed from the time windows \(\mathcal{W}_i\) and \(\mathcal{W}_{i+1}\) are generated from the same distribution.
    \item[Alternative hypothesis]  
    The PL+JS statistics computed from the time windows \(\mathcal{W}_{i}\) and \(\mathcal{W}_{i+1}\) are generated from different distributions.
\end{description}

The pre-processing and parameter settings follow the approach described in
Remark~\ref{rem:choice_gamma_theta}.
Specifically, at the pre-processing stage:
\begin{itemize}
  \item All pairwise Euclidean distances between vote vectors across all windows are collected.
  \item The 95th percentile of these distances is taken as \(q\). The remaining 5\% are discarded as outliers.
  \item Every vote vector is rescaled by dividing all its components by \(q\), thereby setting the global cutoff to \(S=1\).
\end{itemize}
The parameters are fixed to \((\gamma, \theta)=(0, 0.011)\).
The PL+JS statistic is computed for homological degrees \(\ell=0\) and \(\ell=1\),
each using \(M=3\) persistence landscape layers.

For each adjacent-window pair \((\mathcal{W}_i,\mathcal{W}_{i+1})\) we pool the two daily samples and
generate \(n=200\) random labelings that \emph{preserve the original group sizes}
\(\bigl(N_{i},N_{i+1}\bigr)\) (see Table~\ref{tab:voters_per_day}).
The Monte Carlo permutation \(p\)-value uses the standard “\(+1\) correction” \(\widehat p_n\) (\ref{eq:approximate_p_value}). By Proposition~\ref{prop:permutation_test_conservative},
these Monte Carlo permutation \(p\)-values are \emph{conservative}.
We test at a significance level of \(\alpha=0.05\).

\begin{table}[htbp]
  \centering
  \caption{Daily voter counts and total (Permutation test via Monte Carlo; 8 days)}
  \label{tab:voters_per_day}
  \begin{tabular}{l S}
    \toprule
    Date & {Number of voters} \\
    \midrule
    2023-07-18 & 350 \\
    2023-07-19 & 601 \\
    2023-07-20 & 197 \\
    2023-07-21 & 244 \\
    2023-07-22 & 268 \\
    2023-07-23 & 342 \\
    2023-07-24 & 443 \\
    2023-07-25 & 187 \\
    \midrule
    Total & 2632 \\
    \bottomrule
  \end{tabular}
\end{table}

\paragraph{Monte Carlo precision and empirical \(p\)-values.}
With \(n=200\) permutations, the worst–case standard error of \(\widehat p_n\) is
\(1/(2\sqrt{n+1})=1/(2\sqrt{201})\approx 0.035\) from Proposition~\ref{prop:mc-se}. Moreover, because
\(\widehat p_n=(1+K)/(n+1)\) with \(K\in\{0,\ldots,n\}\) the count of exceedances,
the attainable \(p\)-values lie on the grid
\(\{1/(n+1), 2/(n+1),\ldots,1\}\) with spacing (grid resolution)
\(\Delta=1/(n+1)=0.004975\). Thus decisions are typically stable unless
\(\widehat p_n\) lies very close to \(\alpha=0.05\).

In our adjacent-day tests (Table~\ref{tab:adjacent_day_shuffle}), the \(H_0\) (connected components)
\(p\)-values for \texttt{2023-07-20\(\to\)2023-07-21} and
\texttt{2023-07-21\(\to\)2023-07-22} equal the minimal nonzero value
\(\Delta=0.004975\)—i.e., none of the \(200\) shuffles produced a statistic
at least as large as the observed one—so these comparisons strongly reject at
\(\alpha=0.05\) and remain significant after Bonferroni adjustment for seven tests
(\(0.05/7\approx 0.007\)). The preceding pair \texttt{2023-07-19\(\to\)2023-07-20} in \(H_1\) has \(\widehat p_n=0.069652=14\Delta\), which is about \(4\Delta\) above \(\alpha=0.05\). Increasing the number of permutations would sharpen the estimate, but it would in any case remain non-significant under Bonferroni. As for other pairs, \(p\)-values are mostly well
above \(0.05\) (ranging from \(0.15\) to \(0.92\)), supporting the temporal stability.

\begin{table}[htbp]
  \centering
  \caption{Adjacent-day permutation tests (Monte Carlo approximation, \(n=200\)). 
  PL+JS statistics and permutation \(p\)-values for \(H_0\) and \(H_1\); 
  \(n_1,n_2\) are the daily sample sizes.}
  \label{tab:adjacent_day_shuffle}
  \begin{tabular}{
      l l
      S[table-format=1.6]
      S[table-format=1.6]
      S[table-format=1.6]
      S[table-format=1.6]
      S[table-format=3.0]
      S[table-format=3.0]}
    \toprule
    date\(_1\) & date\(_2\) & {\({PL+JS}_{H_0}\)} & {\(p_{H_0}\)} & {\({PL+JS}_{H_1}\)} & {\(p_{H_1}\)} & {\(n_1\)} & {\(n_2\)} \\
    \midrule
    2023-07-18 & 2023-07-19 & 0.103775 & 0.442786 & 0.264091 & 0.194030 & 350 & 601 \\
    2023-07-19 & 2023-07-20 & 0.219916 & 0.109453 & 0.275090 & 0.069652 & 601 & 197 \\
    2023-07-20 & 2023-07-21 & 0.328216 & 0.004975 & 0.089732 & 0.726368 & 197 & 244 \\
    2023-07-21 & 2023-07-22 & 0.436934 & 0.004975 & 0.090739 & 0.407960 & 244 & 268 \\
    2023-07-22 & 2023-07-23 & 0.108598 & 0.636816 & 0.030022 & 0.731343 & 268 & 342 \\
    2023-07-23 & 2023-07-24 & 0.363381 & 0.149254 & 0.035573 & 0.920398 & 342 & 443 \\
    2023-07-24 & 2023-07-25 & 0.179065 & 0.477612 & 0.027443 & 0.696517 & 443 & 187 \\
    \bottomrule
  \end{tabular}
\end{table}

\paragraph{Conclusion.}
Taken together, the day‐to‐day topology of vote vectors is largely stable. Only two
adjacent transitions—\texttt{2023-07-20\(\to\)2023-07-21} and \texttt{2023-07-21\(\to\)2023-07-22}—
show statistically significant differences in the \(H_0\) structure (after Bonferroni correction),
while no significant changes are detected for \(H_1\). Given the Monte Carlo grid resolution
\(\Delta=1/(n+1)=1/201\) and worst–case standard error \(\le 1/(2\sqrt{201})\approx0.035\),
these rejections reflect genuine effects rather than artifacts of simulation noise. We do not attempt to
assign causes here; explaining these shifts would require external covariates (e.g., turnout
composition or campaign events) and is left for future work.

As discussed in Remark~\ref{rem:multiple_testing}, 
multiple consecutive windows can also be tested jointly 
using the Holm step‐down procedure, 
which provides higher statistical power than the Bonferroni correction 
while still controlling the family-wise error rate.
In the present analysis, however, 
even the more conservative Bonferroni adjustment 
already yielded significant results for these two adjacent transitions.
This indicates that the detected changes are sufficiently strong 
to remain significant under stricter error control.

\section{Discussion and Conclusion}

\subsection{Summary of Contributions}
This work develops a probabilistic and computational framework for analyzing
persistence statistics in high-dimensional random point clouds.
We established moment bounds and tightness results for total and maximum
persistence under general distributions, and clarified their scaling behavior
within a Gaussian mixture setting.
Building on these theoretical foundations, we introduced a bounded,
distribution–free statistic---the PL+JS divergence---that combines
persistence landscapes with an information–theoretic distance.
We proved its \(1/2\)–H\"{o}lder continuity, providing a rigorous basis for persistence–based
comparisons of random point clouds.
Illustrative experiments on empirical data demonstrate that
the proposed framework can detect meaningful structural transitions in complex systems,
but the main contribution of this study lies in its theoretical underpinnings.

\subsection{Perspectives for Future Research}
Future work will extend the present analysis beyond Gaussian mixtures
to more general dependent or non–Euclidean settings,
and explore asymptotic regimes for random persistence under various sampling models.
Another direction concerns the integration of persistence–based divergences
into statistical inference pipelines, including hypothesis testing,
change detection, and confidence assessment.
From a computational standpoint, efficient approximation of persistence
landscapes and their gradients could enable extensions toward optimization
and machine–learning applications.
Finally, connecting the proposed statistic to interpretable geometric or
physical features remains an open challenge, bridging the gap between
probabilistic topology and data–driven analysis.

\backmatter

\section*{Acknowledgements}
The author thanks Dr.~Yoshihiko Uchida, Professor at Shunan University, for valuable discussions.

\section*{Statements and Declarations}

\textbf{Competing Interests}\\
The author declares that there are no competing interests.

\medskip
\noindent
\textbf{Funding}\\
This research received no external funding.

\medskip
\noindent
\textbf{Data Availability}\\
The datasets used and/or analysed during the current study are available from the corresponding author on reasonable request.





\begin{appendices}
\renewcommand{\thefigure}{\arabic{figure}}
\setcounter{figure}{0}  

\renewcommand{\thetable}{\arabic{table}}
\setcounter{table}{3}  
\section{An Overview of TDA}
\label{sec:tda_basics}
This appendix section provides an overview of the basic concepts of TDA, 
a framework for extracting and quantifying the shape-related features of data, 
particularly geometric and topological properties such as cluster structures, loops, and voids as discussed in
\cite{carlsson2009, edelsbrunner2010, ghrist2008}.

\subsection{Simplicial Complexes}
To endow a discrete set of points with a topological structure, a \emph{simplicial complex} is used.  
A simplicial complex is a finite set of simplices—points (0-simplices), edges (1-simplices), triangles (2-simplices), tetrahedra (3-simplices), etc.—combined in a way that respects inclusion relations.

Given a discrete point cloud \(x=(x_1,\ldots,x_N)\in(\mathbb{R}^d)^N\),
we can construct a family of simplicial complexes parameterized by a scale parameter \(r>0\).
For each fixed \(r\), two representative constructions are the \v{C}ech and Vietoris–Rips complexes, which define the connectivity structure of \(X\) as scale \(r\).
\begin{description}
    \item[\emph{\v{C}ech complex}]  
    For each data point, consider an open ball of radius \(r/2\) centered at that point.  
    A simplex is formed for any set of points whose corresponding balls have a nonempty common intersection.  
    Here, a \(k\)-simplex corresponds to a set of \(k+1\) points.  
    The choice of \(r/2\) ensures that the \v{C}ech complex and the Vietoris–Rips complex (defined below) are built at comparable geometric scales.  
    The \v{C}ech complex is known to faithfully capture the topological features of the underlying space (nerve theorem).
    \item[\emph{Vietoris–Rips complex}]  
    A \(k\)-simplex is formed if all pairwise distances among a set of \(k+1\) points are at most \(r\).  
    Due to its computational efficiency, the Vietoris–Rips complex is often preferred in practice.
\end{description}

\subsection{Homology}
To extract topological features from a simplicial complex \(K\), \emph{homology} is used.  

The space of \(\ell\)-\emph{chains} over the field \(\mathbb{F} := \mathbb{Z}/2\mathbb{Z}\) is defined as:
\[
C_\ell(K) := \bigoplus_{\sigma \in K,\, \operatorname{dim} \sigma = \ell} \mathbb{F}\sigma,
\]
where the direct sum is taken over all \(\ell\)-simplices of \(K\).

The \emph{boundary} homomorphism \(\partial_\ell \colon C_\ell(K) \to C_{\ell-1}(K)\) is defined for  
\(\sigma = \{v_0, v_1, \ldots, v_\ell\} \in K\) by:
\[
\partial_\ell \sigma = \sum_{i=0}^\ell \{v_0, \ldots, \hat{v_i}, \ldots, v_\ell\} \in C_{\ell-1}(K),
\]
where \(\hat{\cdot}\) denotes omission.

The \emph{cycle group}, \emph{boundary group}, and \emph{homology group} are given by:
\begin{align*}
Z_\ell(K) &:= \operatorname{Ker} \partial_\ell \subset C_\ell(K), \\
B_\ell(K) &:= \operatorname{Im} \partial_{\ell+1} \subset Z_\ell(K), \\
H_\ell(K) &:= Z_\ell(K) / B_\ell(K).
\end{align*}

The \(\ell\)-th homology group \(H_\ell(K)\) characterizes \(\ell\)-dimensional “holes” (connected components, loops, voids, etc.) in the simplicial complex \(K\).
These homology groups form the basis of our topological analysis, as their changing structures directly reflect changes in the underlying geometry of the data.
For clarity regarding the geometric and statistical meaning of these features, we adopt the following standard interpretations throughout this paper.
\begin{remark}[Interpretation of Homology Elements]
For interpretation in the context of statistical data analysis and application to high-dimensional attribute vectors, we use the following standard geometric interpretations:
\begin{enumerate}
\item \(H_0(K)\): Elements of the \(0\)-th homology group are interpreted as \textbf{connected components}. Statistically, these are viewed as \textbf{clusters} or distinct groupings within the data.
\item \(H_1(K)\): Elements of the \(1\)-st homology group are interpreted as \textbf{loops} or circular structures in the data.
\item \(H_2(K)\): Elements of the \(2\)-nd homology group are interpreted as \textbf{voids} (cavities) or \(3\)-dimensional holes.
\end{enumerate}
\end{remark}

\subsection{Persistence Diagram}
\label{sec:persistence_diagram}
Let \(d, N \in \mathbb{N}\) and let \(x = (x_1, \ldots, x_N) \in (\mathbb{R}^d)^N\) 
be a finite point cloud in \(\mathbb{R}^d\), where each element of \(\mathbb{R}^d\) represents an attribute vector. 

The set \(K = 2^{\{1,\ldots,N\}} \setminus \{\emptyset\}\) corresponds to the simplicial complex consisting of all faces of the \((N-1)\)-simplex.

A \emph{filtration} is a map
\[
\Phi = (\Phi_\sigma)_{\sigma \in K} \in \mathbb{R}^{\lvert K \rvert}
\]
such that \(\Phi_{\sigma_1} \le \Phi_{\sigma_2}\) whenever \(\sigma_1 \subset \sigma_2\) (where \(\le\) denotes the standard order on \(\mathbb{R}\)). 
Here, \(\lvert K \rvert\) denotes the cardinality of the set \(K\).

For a given real parameter \(r\), we denote by 
\[
K_r = \{\sigma \in K :\,  \Phi_\sigma \le r\}
\]
the subcomplex consisting of all simplices whose filtration value is at most \(r\).

For each homological degree \(\ell \in \mathbb{N}_{0}\), there exist an integer \(m \in \mathbb{N}_{0}\),
a set of \(\ell\)-chains \(\{ z_i \in C_\ell(K) :\,  i = 1, \ldots, m \}\),
and a sequence \(((b_i, d_i))_{i=1,\ldots,m}\) (\(0 \le b_i \le d_i \le \infty\))
such that:
\begin{enumerate}
    \item If \(r < b_i\), then \(z_i \notin Z_\ell(K_r)\).
    \item If \(r \ge b_i\), then \(z_i \in Z_\ell(K_r)\).
    \item If \(b_i \le r < d_i\), then \(z_i \notin B_\ell(K_r)\).
    \item If \(r \ge d_i\), then \(z_i \in B_\ell(K_r)\).
    \item For each \(i\) with \(b_i \le r < d_i\), the homology classes of \(z_i\) in \(H_\ell(K_r)\) form a basis of \(H_\ell(K_r)\).
\end{enumerate}

\begin{remark}
This is classical; see \cite{edelsbrunner2002,edelsbrunner2010,ghrist2008}.
For stability of barcodes under perturbations of the filtration, see \cite{cohen2007}.
\end{remark}

Each pair \((b_i, d_i)\) may have multiplicity and is referred to as a \emph{birth–death pair} (or bar). 
The birth–death pairs \(((b_i, d_i))_{i=1,\ldots,m}\) are regarded as a locally finite multisets 
on \(\{(b,d)\in\overline{\mathbb{R}}^2: b<d\}\) with the convention that the diagonal 
\(\{(t,t):t\in\overline{\mathbb{R}}\}\) has infinite multiplicity. 
The finite multiset \(D\) is called the \emph{persistence diagram} (or barcode) in degree \(\ell\) 
(see, e.g., \cite{edelsbrunner2002, cohen2007}).

\subsection{Bottleneck Distance}\label{sec:bottleneck_distance}

The bottleneck distance was introduced by \cite{cohen2007}, a standard metric on persistence diagrams that quantifies the largest deviation between features of two diagrams under optimal matching.
This distance plays a central role in stability results for persistent homology.

In what follows, we reuse the notation introduced in Section~\ref{sec:general_preliminaries}.
\begin{definition}[Bottleneck Distance]
Let \(D, D' \in \Dgm\) be persistence diagrams in degree \(\ell\). 

A \emph{matching} \(\Gamma\) between \(D\) and \(D'\) is a subset
\[
\Gamma \subset D \times D'
\]
such that the coordinate projections
\[
\pi_1 : \Gamma \to D, \quad \pi_2 : \Gamma \to D'
\]
are bijections onto their respective diagrams after adding diagonal points. 
Unmatched points are allowed to be paired with point in the diagonal (\(\Delta:=\{(b,d)\in\bar{\mathbb{R}}^2:\,  b=d\}\).

The \emph{cost} of \(\Gamma\) is defined by
\[
\operatorname{cost}(\Gamma) :=
\sup_{(p,q)\in\Gamma} \|p - q\|_\infty,
\quad
\|(b,d)\|_\infty := \max\{|b|,\,|d|\}.
\]

The \emph{bottleneck distance} in degree \(\ell\) is
\[
d_{\ell, B}\bigl(D, D'\bigr)
:=
\inf_{\Gamma} \operatorname{cost}(\Gamma),
\]
where the infimum runs over all matchings~\(\Gamma\).
\end{definition}

\begin{remark}
Infinite multiplicity on \(\Delta\) ensures that a matching always exists by allowing any leftover point
to be paired with a diagonal point (interpreting zero-length features as noise).
\end{remark}

\subsection{Persistence Landscape}\label{sec:PL}
Persistence diagrams provide a finite multiset representation of topological features, but for certain analytical and statistical purposes, it is often more convenient to work with a functional summary.
The \emph{persistence landscape}, introduced by \cite{bubenik2015}, transforms a persistence diagram into a sequence of piecewise-linear functions, enabling the use of functional analysis and statistical tools while retaining key topological information.
We recall its definition below, following the notation in Section~\ref{sec:general_preliminaries}.

For each point \((b,d)\) (\(d<\infty\)) in a persistence diagram \(D\) in degree \(\ell\in\mathbb{N}_0\), we define a triangular-shaped function supported on the interval \([b,d]\), with its maximum value attained at the midpoint \(\frac{b+d}{2}\), as follows:
\[
f_{(b,d)}(t) = \begin{cases}
t - b & \text{if } b \le t \le \frac{b+d}{2}, \\
d - t & \text{if } \frac{b+d}{2} < t \le d, \\
0     & \text{otherwise}
\end{cases}
\]
This function takes positive values only on the interval \([b,d]\) and forms a symmetric triangular function whose peak height is \(\frac{d - b}{2}\).

For any \(t\in \mathbb{R}\), let the \(m\)-th largest value be 
\[
\lambda_{\ell,m}(D)(t)=\operatorname{\text{\(m\)-}\max}
\{f_{(b,d)}(t):\,  (b,d) \in D\}.
\]
Here note that \(D\) has finite elements. 
The function \(\lambda_{\ell, m}(D)\) is called the \(m\)-th persistence landscape layer, and the sequence \((\lambda_{\ell, m}(D))_{m\in\mathbb{N}_0}\) is referred to as the \emph{persistence landscape}.

\section{JS Divergence}\label{sec:js_divergence}

For completeness, we briefly recall the definition of the Jensen–Shannon (JS) divergence,
which is used in the construction of the PL+JS statistic in the main text. 
The JS divergence is a symmetrized and smoothed variant of the Kullback–Leibler (KL) divergence, and is widely used as a finite and symmetric 
measure of dissimilarity between probability distributions. 
Throughout this paper, we compute divergences using logarithm base~\(2\). 
This choice is twofold: first, it expresses the divergence in \emph{bits}, 
which is standard in information theory; second, it normalizes the 
maximum possible JS divergence to~\(1\), which makes the JS distance 
lie in the range \([0,1]\) and facilitates interpretation. 
We begin by recalling the definition of the KL divergence, 
which quantifies the discrepancy between two probability density functions 
in an asymmetric manner.

\begin{definition}
Let \(p\)  and \(q\)  be probability density functions with respect to a common reference measure~\(\mu\).
The \emph{Kullback-Leibler (KL) divergence} of \(p\) from \(q\), computed with logarithm base~\(2\), is defined as
\[
  \mathrm{KL}(p \Vert q)
  := \int_{\{\,q(t)>0\,\}} p(t)\,\log_2 \frac{p(t)}{q(t)}\,d\mu(t).
\]
We adopt the following conventions for the integrand:
\begin{itemize}
  \item If \(p(t)=0\), then \(p(t)\log_2 \frac{p(t)}{q(t)}\) is taken to be \(0\).
  \item If \(q(t)=0\) and \(p(t)>0\) on a set of positive \(\mu\) -measure, then \(\mathrm{KL}(p\Vert q)=+\infty\).
\end{itemize}
\end{definition}

\begin{definition}
Let \(p\)  and \(q\)  be probability density functions.  
The \emph{Jensen-Shannon (JS) divergence} is defined as
\[
  \mathrm{JS}(p \Vert q)
  := \tfrac{1}{2}\,\mathrm{KL}(p \Vert m) 
   + \tfrac{1}{2}\,\mathrm{KL}(q \Vert m),
\]
where \(m\)  is the density of the mixture distribution
\[
  m(t) = \tfrac{1}{2}\bigl(p(t)+q(t)\bigr).
\]
Since \(m(t)>0\)  whenever \(p(t)>0\)  or \(q(t)>0\) , both terms in the definition are finite, and hence \(\mathrm{JS}(p\Vert q)<\infty\) .
\end{definition}

\begin{definition}
For two probability density functions \(p\) and \(q\),  
the Jensen–Shannon distance, computed with logarithm base \(2\), is defined as
\[
d_{\text{JS}}(p,q)=\sqrt{D_{\text{JS}}(p\Vert q)}.
\]
\end{definition}

\section{Empirical Analysis: A Case Study on Web3 Voting Behavior}
\label{sec:empirical_voting}
In appendix section, we describe the preprocessing procedure applied to the 
\emph{Gitcoin Steward Council Elections v3} dataset used in Section~\ref{sec:numerical_illustration}. 
We then clarify the characteristics of the data through principal component analysis (PCA) 
and Gaussian mixture model (GMM) fitting, and finally conduct simulations 
to empirically verify the validity of Corollary~\ref{cor:uniform_poly_tail}.

\subsection{Dataset and Methods}
This study analyzes the voting data from the Gitcoin Steward Council Elections v3, held from July 18 to July 25, 2023.
The data were collected via the “GraphQL API” provided by Snapshot, a decentralized voting platform, by accessing \url{https://hub.snapshot.org/graphql}.  
We searched for the proposal titled ``Gitcoin Steward Council Elections v3" submitted by the community \texttt{gitcoindao.eth}, and developed a Python script to automatically retrieve all voting records associated with this proposal.

Each voting record contains the voter’s address, timestamp, and the score allocation for each candidate.  
The dataset, originally in nested JSON format, was transformed into a flat tabular structure using \texttt{pandas.json\_normalize()} and subsequently stored in CSV format for further analysis.  
It covers an eight-day period, from July 18 to July 25, 2023, and consists of 2,632 records. 
The columns \texttt{choice.1} through \texttt{choice.10} represent the scores assigned by each voter to each candidate, and the sequence of these ten scores is referred to as the vote vector, indicating how each voter distributed their scores among multiple candidates.

Our analytical procedure proceeds in two steps:  
(1) As a preliminary step before conducting TDA, we initially apply principal component analysis (PCA) to examine the contribution of the linear components of the data, evaluating the cumulative explained variance and principal components.
(2) Fit a GMM to capture latent clusters of voting behavior, and conduct simulations by controlling the variance scaling parameter \(\eta\).  
This enables us to examine how persistence statistics (total and maximum persistence) evolve as dispersion grows, linking directly to the theoretical results on variance scaling.

\subsection{PCA}
We first center each voter’s 10-dimensional vector of scores for the candidates  
(by subtracting the mean vector shown in Table~\ref{tab:center-values})  
and apply principal component analysis (PCA).  
The results are presented in Tables~\ref{tab:eigenvalue-choices} and \ref{tab:eigenvectors}.

According to the mean vector in Table~\ref{tab:center-values},  
Candidate~1 has the highest mean score (approximately 4.29),  
significantly exceeding the others,  
followed by Candidate~2 (1.34) and Candidate~4 (1.17).  
This indicates that, overall, there is strong support for Candidate~1,  
while Candidates~2 and~4 also receive a non-negligible share of votes.

From the eigenvalues and variance ratios in Table~\ref{tab:eigenvalue-choices},  
the proportion of variance explained by the first principal component is about 58\%,  
followed by 12\% for the second, 7.7\% for the third, and 6.2\% for the fourth.  
The cumulative contribution rate reaches 78\% with three components and 91\% with six components.  

The first eigenvector (PC1) has a very large positive loading for Candidate~1 (about 0.99),  
while the loadings for other candidates are all 0.13 or less (Table~\ref{tab:eigenvectors}).  
Thus, the first principal component strongly reflects  
the magnitude of the score for Candidate~1,  
with higher PC1 scores corresponding to voters who concentrate their scores on Candidate~1.

The second eigenvector (PC2) has a large positive loading for Candidate~4 (about 0.77),  
and relatively large values for Candidates~2,~5,~3 and~6,  
while Candidate~1 has a negative loading (-0.11).  
Hence, the second principal component represents an axis contrasting  
“emphasis on Candidates~4, 2, and 5” versus “emphasis on Candidate~1.”

The third eigenvector (PC3) has the largest positive loading for Candidate~2 (about 0.80)  
and a large negative loading for Candidate~4 (-0.52).  
This axis represents another pattern contrasting “emphasis on Candidate~2” versus “emphasis on Candidate~4.”

In summary, the voting structure of this dataset can be characterized by three main axes:
\begin{itemize}
  \item PC1: Concentration on Candidate~1
  \item PC2: Candidates~4 (and 2, 5) vs Candidate~1
  \item PC3: Candidate~2 vs Candidate~4
\end{itemize}

The patterns suggested by PC1–PC3 indicate distinct archetypes of scoring behavior.
To operationalize this structure, we model the vote vectors as a GMM, which may provide  probabilistic cluster memberships aligned with these axes.

\begin{table}[htbp]
\centering
\caption{Mean Vector of Voting Scores}
\label{tab:center-values}
\begin{tabular}{lr}
\hline
Candidate & Mean Score \\
\hline
choice.1 & 4.29 \\
choice.2 & 1.34 \\
choice.3 & 0.88 \\
choice.4 & 1.17 \\
choice.5 & 0.73 \\
choice.6 & 0.66 \\
choice.7 & 0.51 \\
choice.8 & 0.41 \\
choice.9 & 0.46 \\
choice.10 & 0.50 \\
\hline
\end{tabular}
\end{table}

\begin{table}[htbp]
\centering
\caption{Eigenvalues and Variance Ratios (PCA Results)}
\label{tab:eigenvalue-choices}
\begin{tabular}{lrrr}
\hline
Principal Component & Eigenvalue & Variance Ratio & Cumulative \\
\hline
PC1 & 222.38 & 58\% & 58\% \\
PC2 & 45.93 & 12\% & 70\% \\
PC3 & 29.47 & 7.7\% & 78\% \\
PC4 & 23.68 & 6.2\% & 84\% \\
PC5 & 15.64 & 4.1\% & 88\% \\
PC6 & 12.10 & 3.2\% & 91\% \\
PC7 & 10.39 & 2.7\% & 94\% \\
PC8 & 9.18 & 2.4\% & 96\% \\
PC9 & 8.71 & 2.3\% & 99\% \\
PC10 & 4.99 & 1.4\% & 100\% \\
\hline
\end{tabular}
\end{table}

\begin{table}[htbp]
\centering
\caption{Principal Component Loadings}
\label{tab:eigenvectors}
\setlength{\tabcolsep}{2pt}  
\begin{tabular}{lrrrrrrrrrr}
\hline
 & choice.1 & choice.2 & choice.3 & choice.4 & choice.5 & choice.6 & choice.7 & choice.8 & choice.9 & choice.10 \\
\hline
PC1  & 0.99 & 0.13 & 0.05 & 0.05 & 0.01 & 0.00 & 0.01 & 0.01 & 0.02 & 0.02 \\
PC2  & -0.11 & 0.38 & 0.24 & 0.77 & 0.30 & 0.24 & 0.15 & 0.08 & 0.08 & 0.10 \\
PC3  & -0.10 & 0.80 & 0.27 & -0.52 & 0.06 & -0.02 & -0.01 & 0.03 & 0.02 & 0.04 \\
PC4  & 0.03 & -0.29 & 0.08 & -0.35 & 0.47 & 0.52 & 0.32 & 0.23 & 0.34 & 0.08 \\
PC5  & 0.00 & -0.25 & 0.81 & -0.02 & -0.39 & 0.28 & -0.20 & -0.10 & -0.07 & 0.03 \\
PC6  & -0.02 & 0.05 & -0.08 & 0.05 & -0.54 & -0.13 & 0.32 & 0.05 & 0.60 & 0.46 \\
PC7  & 0.00 & -0.21 & 0.34 & -0.03 & 0.41 & -0.64 & -0.02 & 0.14 & -0.08 & 0.48 \\
PC8  & 0.01 & 0.05 & -0.27 & -0.05 & -0.06 & 0.39 & -0.14 & -0.06 & -0.48 & 0.72 \\
PC9  & -0.01 & 0.03 & -0.12 & 0.04 & 0.08 & 0.08 & -0.83 & 0.30 & 0.43 & 0.08 \\
PC10 & 0.00 & 0.01 & 0.01 & 0.02 & -0.26 & -0.02 & 0.14 & 0.90 & -0.30 & -0.11 \\
\hline
\end{tabular}
\end{table}

\subsection{GMM}
To investigate how the characteristics of voting behavior affect the topology,  
we construct a simulation model of the voting behavior.

The Gitcoin Steward Council Elections v3 voting data consist of  
Quadratic Voting for \(d=10\) candidates by \(N=2632\) voters.  
The vote vector of voter \(i\), representing the number of votes for each candidate, is given by
\[
x_i = (x_{i1}, x_{i2}, \dots, x_{i,10})^\top 
\in \mathbb{R}^{10}, \quad i = 1,\dots,N.
\]

We assume that these vote vectors are independent samples drawn from  
a \(K\)-component GMM in 10-dimensional space \(\mathbb{R}^{10}\),  
where \(K\) is the number of clusters.  
As we have seen in Section~\ref{sec:gmm}, which defines the GMM,  
the parameters are \(\{\pi_k,\mu_k,\Sigma_k\}_{k=1}^K\).  
We estimate these parameters by maximum likelihood (EM algorithm) using scikit-learn’s \texttt{GaussianMixture} with \emph{full covariance matrices}  
(i.e., each component has its own unrestricted positive-definite covariance matrix, allowing arbitrary cluster shapes and orientations)  
and \texttt{n\_init = 10} random initializations to mitigate the risk of convergence to poor local optima.

To determine the number of components \(K\), we evaluate candidate values \(K = 1, \dots, 10\),  
fitting a GMM for each and computing the Bayesian and Akaike information criteria (BIC/AIC).  
Although both BIC and AIC continue to decrease slightly beyond \(K=6\),  
we adopt \(K=6\) as a parsimonious choice that balances model fit and simplicity.  
As a sanity check, we generate synthetic samples from the fitted \(K=6\) model using its estimated 
\(\{\pi_k, \mu_k, \Sigma_k\}\) and repeat the same BIC/AIC evaluation; 
the resulting profiles closely align with those from the real data (see Figure~\ref{fig:gmm_model_selection}).
Here, ``real'' refers to the observed vote vector data.  
The number of synthetic samples in each cluster is determined according to \(\pi_k\),  
and the samples are drawn from the multivariate normal distribution \(\mathcal{N}(\mu_k, \Sigma_k)\).  
Comparing the BIC/AIC curves for real and synthetic data allows us to verify whether the fitted GMM reproduces the cluster structure of the real data; in our case, both exhibit similar decreasing trends with a clear elbow at \(K=6\).

\begin{figure}[htbp]
    \centering
    \includegraphics[width=0.85\linewidth]{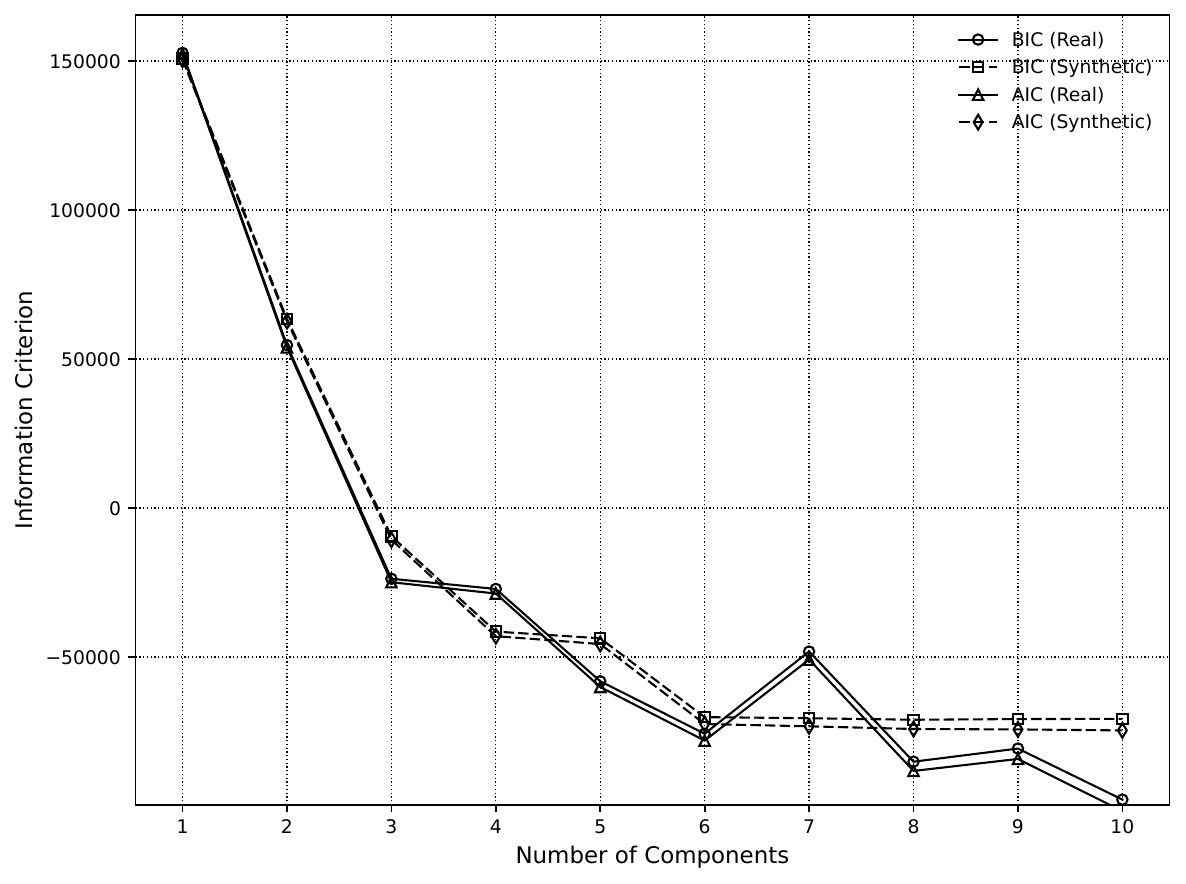}
    \caption{Comparison of cluster number selection by BIC and AIC (Real vs Synthetic)}
    \label{fig:gmm_model_selection}
\end{figure}

The final model selected is a GMM with \(K=6\) components.
Tables~\ref{tab:gmm_components_1} and \ref{tab:gmm_components_2} 
summarize the estimated parameters of each component. 
For readability, the results are split into two tables: 
Table~\ref{tab:gmm_components_1} reports the square roots of the 
minimum
\footnote{Note that the entries shown as 0.0 are not exactly zero but small positive values.
The GaussianMixture implementation in scikit-learn applies internal regularization and numerical stabilization, ensuring that the covariance matrices remain positive definite and effectively preventing true zero (singular) eigenvalues. See Remark~\ref{rem:key_features_of_gmm} (i).}
and maximum eigenvalues of the covariance matrices, 
the mixture weights, and the first four entries of the mean vector, 
while Table~\ref{tab:gmm_components_2} shows the remaining mean vector entries.
Components 2, 5, and 6 are nearly degenerate, with very small minimum eigenvalues, indicating that the distribution is effectively confined to a lower-dimensional subspace.
The coexistence of nearly degenerate components (2, 5, and 6) and more diffuse components (1, 3, and 4) suggests heterogeneity in decision-making styles: 

\begin{table}[htbp]
\centering
\caption{Summary Statistics of GMM Components -- 1}
\label{tab:gmm_components_1}
\begin{tabular}{cccccccc}
\toprule
 Component & sqrt\_min & sqrt\_max & weight & choice.1 & choice.2 & choice.3 & choice.4 \\
\midrule
 1 & 0.4 & 1.0 & 0.28 & 0.6 & 0.5 & 0.4 & 0.5 \\
 2 & 0.0 & 20.5 & 0.34 & 6.3 & 0.0 & 0.0 & 0.0 \\
 3 & 0.7 & 11.0 & 0.05 & 7.1 & 3.5 & 4.2 & 2.3 \\
 4 & 5.6 & 25.0 & 0.02 & 19.1 & 6.8 & 3.1 & 6.5 \\
 5 & 0.0 & 34.0 & 0.03 & 25.6 & 16.0 & 12.1 & 21.8 \\
 6 & 0.0 & 8.1 & 0.27 & 1.6 & 1.5 & 0.5 & 0.5 \\
\bottomrule
\end{tabular}
\end{table}

\begin{table}[htbp]
\centering
\caption{Summary Statistics of GMM Components -- 2}
\label{tab:gmm_components_2}
\begin{tabular}{ccccccc}
\toprule
 Cluster & choice.5 & choice.6 & choice.7 & choice.8 & choice.9 & choice.10 \\
\midrule
 1 & 0.4 & 0.5 & 0.4 & 0.4 & 0.4 & 0.5 \\
 2 & 0.0 & 0.0 & 0.0 & 0.0 & 0.0 & 0.0 \\
 3 & 2.2 & 2.0 & 1.7 & 1.9 & 1.8 & 2.5 \\
 4 & 7.0 & 6.9 & 5.9 & 7.8 & 10.4 & 9.6 \\
 5 & 10.6 & 8.6 & 5.4 & 0.1 & 0.0 & 0.1 \\
 6 & 0.0 & 0.0 & 0.0 & 0.0 & 0.0 & 0.0 \\
\bottomrule
\end{tabular}
\end{table}

\subsection{Effect of Variance Scaling on Model Fit and persistence statistics}
\label{sec:effect_of_variance_scaling}

We now investigate how scaling the covariance matrices in the fitted \(K=6\) GMM affects both model fit and the topology of the generated point clouds.  
For a given scale factor \(\eta>0\), the covariance matrices \(\Sigma_k\) are replaced with \(\eta \Sigma_k\), and synthetic samples are drawn from the resulting model.  

From the topological perspective, we focus on four quantities derived from the definitions in Section~\ref{sec:general_preliminaries} and the mathematical results in Theorem~\ref{thm:lp_estimation_gmm} or Corollary~\ref{cor:uniform_poly_tail}:
\begin{itemize}
    \item \emph{Sum H0 Bar}: the total persistence of \(H_0\) bars (\(\mathcal{L}_{0, \mathrm{total}}\)).
    \item \emph{Sum H1 Bar}: the total persistence of \(H_1\) bars (\(\mathcal{L}_{1, \mathrm{total}}\)).
    \item \emph{Max H0 Bar}: the maximum persistence of \(H_0\) bars (\(\mathcal{L}_{0, \mathrm{max}}\)).
    \item \emph{Max H1 Bar}: the maximum persistence of \(H_1\) bars (\(\mathcal{L}_{1, \mathrm{max}}\)).
\end{itemize}

Figures~\ref{fig:scale_factor_vs_metrics_h0} and \ref{fig:scale_factor_vs_metrics_h1} plot these persistence statistics together with the mean log-likelihood as functions of the scale factor~\(\eta\).
Here, by “log-likelihood’’ we mean the pointwise log-density of the fitted GMM
\(\theta=\{\pi_k,\mu_k,\Sigma_k\}_{k=1}^K\) evaluated at a data point \(x\in\mathbb{R}^{10}\):
\[
  f(\theta; x)
  := \log\left(\sum_{k=1}^K \pi_k \,\varphi(x\mid \mu_k,\Sigma_k)\right),
\]
here \(\varphi(\cdot\mid\mu_k,\Sigma_k)\) is the Gaussian density with mean \(\mu_k\) and covariance \(\Sigma_k\).

For each \(\eta\), we generate synthetic samples \(x'_1(\eta),\ldots,x'_{N}(\eta)\) 
from the fitted GMM with covariances scaled by a factor \(\eta\)
(i.e., \(\Sigma_k \mapsto \eta \Sigma_k\) while keeping \((\pi_k,\mu_k)\) fixed), using the same sample size \(N\) as the original dataset, and we report the mean log-likelihood
\[
  \bar f(\eta) \;=\; \frac1N \sum_{j=1}^N f(\theta; x'_j(\eta)).
\]
In Figures~\ref{fig:scale_factor_vs_metrics_h0} and \ref{fig:scale_factor_vs_metrics_h1}, this quantity \(\bar f(\eta)\) is plotted.

The mean log-likelihood decreases almost monotonically as \(\eta\) increases. This is expected: enlarging each component’s covariance draws more dispersed samples, which lowers the predictive density under the fixed fitted model. 

Regarding the persistence statistics, the results are consistent with the bounds in Theorem~\ref{thm:lp_estimation_gmm} or Corollary~\ref{cor:uniform_poly_tail}.  
In particular:
\begin{itemize}
    \item \emph{Sum H0 Bar and Max H0 Bar} increases rapidly for small \(\eta\) and then plateaus, reflecting the \(\mathcal{O}(\eta^{1/2})\) growth predicted by Corollary~\ref{cor:uniform_poly_tail}.
    \item \emph{Sum H1 Bar and Max H1 Bar} exhibits larger fluctuations, especially for small \(\eta\), consistent with the sensitivity of \(H_1\) features to noise and the sparser appearance of loops in the data.
\end{itemize}

Overall, the empirical trends of both \(H_0\) and \(H_1\) metrics align qualitatively with the \(\eta^{1/2}\)-order bounds in Corollary~\ref{cor:uniform_poly_tail}.

\begin{figure}[htbp]
    \centering
    \includegraphics[width=0.85\linewidth]{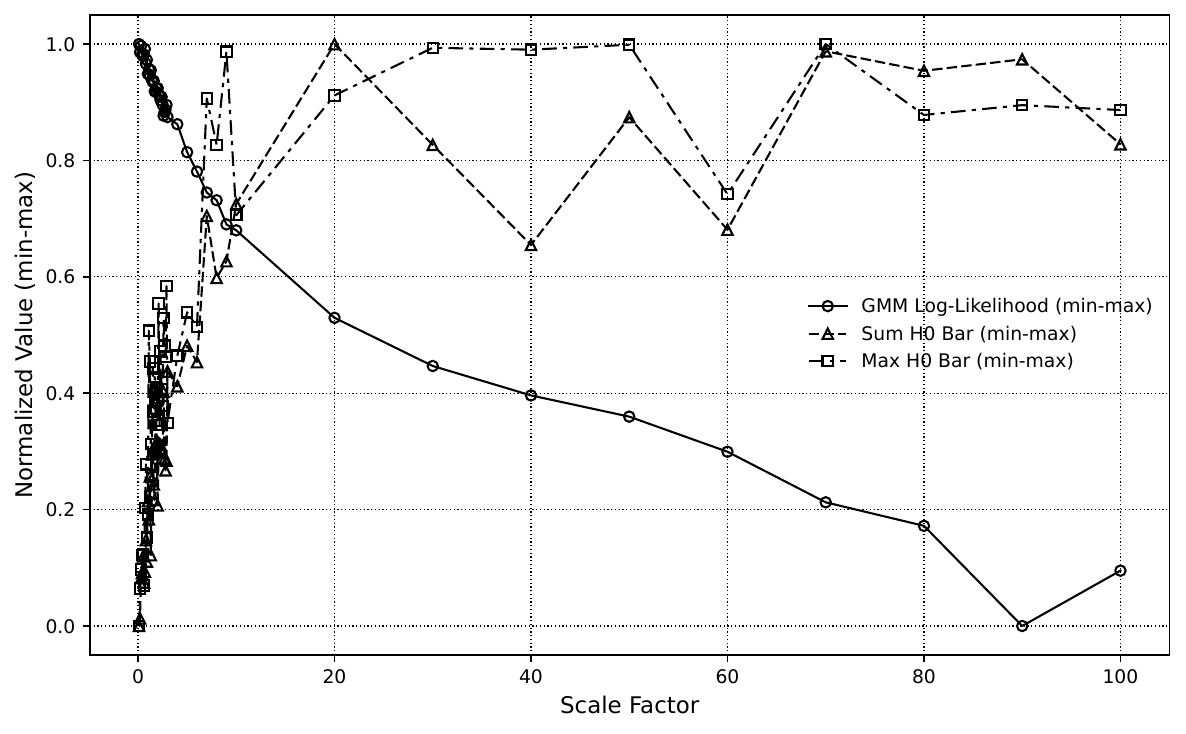}
    \caption{Scale factor vs. GMM log-likelihood and \(H_0\) persistence statistics}
    \label{fig:scale_factor_vs_metrics_h0}
\end{figure}

\begin{figure}[htbp]
    \centering
    \includegraphics[width=0.85\linewidth]{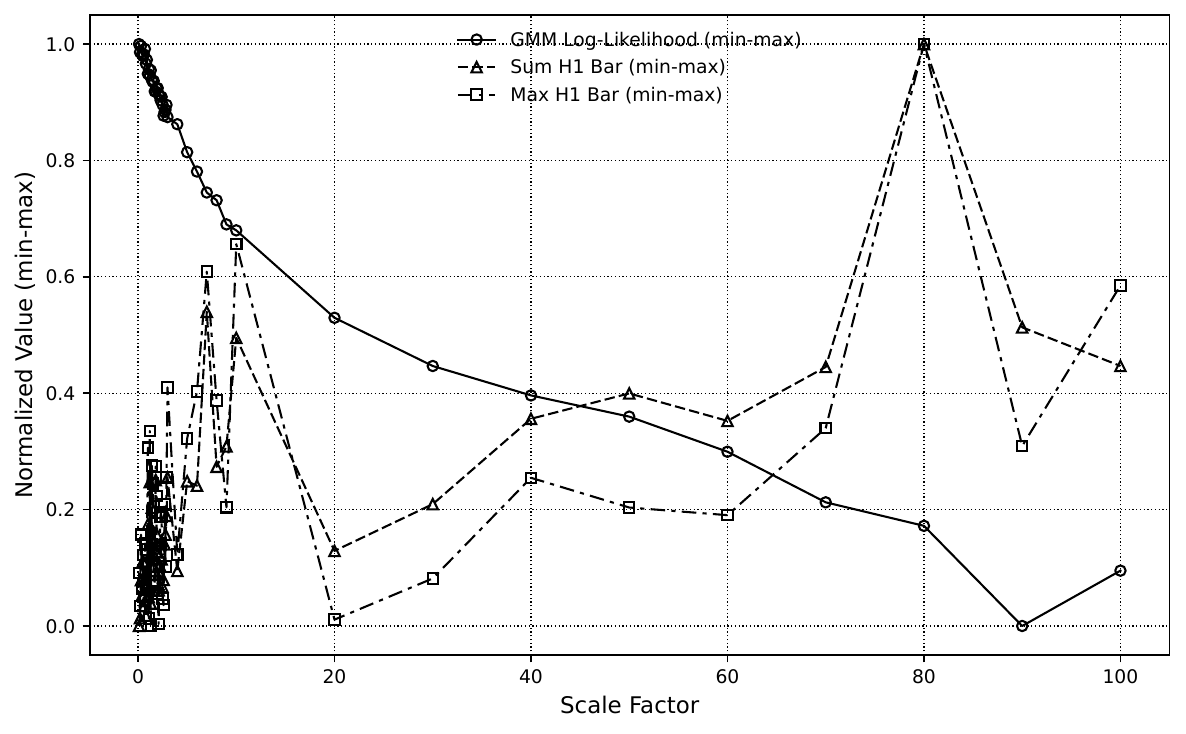}
    \caption{Scale factor vs. GMM log-likelihood and \(H_1\) persistence statistics}
    \label{fig:scale_factor_vs_metrics_h1}
\end{figure}

\section{Change-Point Detection Methods}
\label{sec:change_point_methods}
This appendix section provides the methodological details used in Section~\ref{sec:numerical_illustration} 
to empirically assess the effectiveness of the PL+JS statistic. 
We lay out the hypothesis–testing workflow, the permutation (and Monte Carlo) procedure, 
and the assumptions required for validity, together with remarks on multiple testing control. 
While illustrative examples are drawn from Gitcoin voting data and financial time series, 
the proposed framework and testing procedure are general and applicable to a broad class of high-dimensional dynamic systems.
These materials serve as the technical reference for the empirical analyses reported in the main text.

\subsection{Hypothesis Testing Algorithm}
Let \(\mathcal{W}_1\) and \(\mathcal{W}_2\) denote two time windows, each consisting of multiple attribute vectors.
A time series of attribute vectors can be divided into multiple windows, and for each window, a persistence diagram is computed to capture the underlying topological structure.
By comparing the persistence diagrams corresponding to different windows, we can detect structural changes in the data-generating process over time.
Examples of such windows include, for instance, all vote vectors observed on each voting day in the case of DAO, or all daily return vectors within each month in the case of asset price data.
The goal is to test whether the topological structure of \(\mathcal{W}_1\) 
differs significantly from that of \(\mathcal{W}_2\).

We can compare the two persistence diagrams obtained from \(\mathcal{W}_1\) and \(\mathcal{W}_2\) using the PL+JS statistic defined in Definition~\ref{def:js_static}, and perform a permutation test (or its Monte Carlo approximation) to examine whether the attribute-vector structures in the two time windows are drawn from the same distribution. 
\begin{description}
    \item[Null hypothesis]  
    The PL+JS statistics computed from the time windows \(\mathcal{W}_1\) and \(\mathcal{W}_2\) are generated from the same distribution.
    \item[Alternative hypothesis]  
    The PL+JS statistics computed from the time windows \(\mathcal{W}_1\) and \(\mathcal{W}_2\) are generated from different distributions.
\end{description}
The detailed computation procedure by Monte Carlo is shown in Algorithm~\ref{algo:computation_p_value}.
\begin{algorithm}[htbp]
\caption{Computation of the permutation \(p\)-value}\label{algo:computation_p_value}
\begin{algorithmic}
\State \textbf{Input:} Number of attribute vectors: \(N_1\) from \(\mathcal{W}_1\), \(N_2\) from \(\mathcal{W}_2\)
\State \(V_1 \gets\) set of attribute vectors from \(\mathcal{W}_1\)
\State \(V_2 \gets\) set of attribute vectors from \(\mathcal{W}_2\)
\State \(true\_dist \gets\) PL+JS statistic from persistence diagrams of \(V_1\) and \(V_2\)
\State \(all\_vectors \gets \mathrm{concat}(V_1,V_2)\)
\State \(dist\_list \gets [\,]\) \Comment empty list
\State \(count \gets 0\)
\While{\(count <\) number of shuffles}
    \State \(U_1, U_2 \gets\) randomly select \(N_1\) and \(N_2\) vectors from \(all\_vectors\) (without replacement)
    \State \(dist \gets\) PL+JS statistic computed from persistence diagrams of \(U_1\) and \(U_2\)
    \State append \(dist\) to \(dist\_list\)
    \State \(count \gets count + 1\)
\EndWhile
\State \(p\_val \gets \dfrac{1 + |\{\,d \in dist\_list :\, d \ge true\_dist\,\}|}{\,\text{number of shuffles}+1\,}\)
\State \textbf{Output:} \(p\_val\)
\end{algorithmic}
\end{algorithm}

\subsection{Assumptions, Validity, and Error Control for Permutation Testing}
\label{sec:permutation_test}

This subsection complements the algorithmic description in Section~\ref{algo:computation_p_value}
by detailing the assumptions required for the validity of the permutation framework, 
its theoretical guarantees (conservativeness and exchangeability), 
and practical issues such as Monte Carlo accuracy and multiple-testing control.

Let \((\Omega, \mathcal{F}, \mathbb{P})\) be a probability space.  
Let  \(\mathcal{W}_1\) and  \(\mathcal{W}_2\) be any two distinct time windows. 
For \(k\in\{1,2\}\), let 
\[
  X^{(k)}_1,\ldots,X^{(k)}_{N_k}:\Omega\to\mathcal{X}
\]
be i.i.d.\ random variables sharing the same distribution \(P_k\) under \(\mathbb{P}\). 
(Here, “i.i.d.” means that the random variables are independent and identically distributed with respect to  \(\mathbb{P}\).)
Each realization of \(X_i^{(k)}\) represents one attribute vector from \(\mathcal{W}_k\).
Note that in this section we do not impose any specific assumptions on the probability distributions; in particular, we do not assume them to follow a GMM as considered in Section~\ref{sec:gmm}.
We denote by
\[
  X^{(k)} := \bigl(X^{(k)}_1,\ldots,X^{(k)}_{N_k}\bigr)\in\mathcal{X}^{N_k}
\]
the sample of attribute vectors from \(\mathcal{W}_k\).

Our aim is to test the null hypothesis
\[
  H_{\mathrm{null}} : P_1 = P_2
\]
against the alternative
\[
  H_{\mathrm{alt}} : P_1 \neq P_2.
\]
In other words, we investigate whether the distribution of attribute vectors from \(\mathcal{W}_1\) coincides with that from \(\mathcal{W}_2\).  
Rejecting the null hypothesis indicates that the collective voting behavior has significantly changed between the two time windows.  
To assess this, we employ a permutation test with Monte Carlo approximation, using the PL+JS statistic introduced in Definition~\ref{def:js_static} as a measure of discrepancy between the two empirical samples.

To mathematically formulate the permutation test, we introduce the following notation.

\begin{definition}
For 
\(
  x = (x_1, \ldots, x_{N_1}) \in \mathcal{X}^{N_1}
\)
and 
\(
  y = (y_1, \ldots, y_{N_2}) \in \mathcal{X}^{N_2},
\)
define the concatenation operator
\[
  \mathrm{concat} \colon \mathcal{X}^{N_1} \times \mathcal{X}^{N_2} 
    \to \mathcal{X}^{N_1+N_2}
\]
by
\[
  \mathrm{concat}(x, y) := (x_1, \ldots, x_{N_1}, y_1, \ldots, y_{N_2}).
\]
\end{definition}

Using this notation, the \emph{pooled sample} associated with the random vectors 
\(X^{(1)}\) and \(X^{(2)}\) is defined as
\[
  Z = (Z_1,\ldots,Z_{N_1+N_2}) 
  := \mathrm{concat}\bigl(X^{(1)}, X^{(2)}\bigr) \in \mathcal{X}^{N_1+N_2}.
\]

Let us recall the notion of exchangeability.
\begin{definition}[Exchangeability]
\label{def:exchangeability}
Let \(Z=(Z_1,\ldots,Z_{N_1+N_2})\in\mathcal{X}^{N_1+N_2}\) be random vector on the probability space \((\Omega, \mathcal{F},\mathbb{P})\). 
We say that \(Z\) is \emph{exchangeable} if for every permutation 
\(\pi\) of \(\{1,\ldots,{N_1+N_2}\}\),
\[
   (Z_1,\ldots,Z_{N_1+N_2})\ \stackrel{d}{=}\ (Z_{\pi(1)},\ldots,Z_{\pi({N_1+N_2})}),
\]
where \(\stackrel{d}{=}\) denotes equality in distribution. 
\end{definition}

To ensure that the pooled sample \(Z\) is exchangeable under the above setting, we impose the following assumption.
\begin{assumption}[independence across time windows]
The two samples are independent as random vectors:  \(\sigma\)-algebras \( \sigma\bigl[X^{(1)}\bigr]\) and  \(\sigma\bigl[X^{(2)}\bigr]\) are independent 
(equivalently,  \((X^{(1)},X^{(2)})\sim P_1^{\otimes N_1}\otimes P_2^{\otimes N_2}\)).
\end{assumption}

Under the hypothesis \(H_{\mathrm{null}}: P_1=P_2=:P\), the pooled sample \(Z\) has independent and identically distributed components with common distribution \(P\);
 equivalently, the joint distrubution of \(Z\) is
 \(P^{\otimes (N_1+N_2)}.\)
 Hence \(Z\) is exchangeable.

\begin{remark}[Independence of Vote Vectors]
\label{rem:independence_vote_vectors}
When the attribute vectors represent vote vectors (e.g., in the Gitcoin dataset), 
the independence---both among voters within each time window and between time windows---is natural, 
because each voter makes an individual decision and each voting round or day is regarded as an independent event. 
In most Web3 governance platforms (e.g., Snapshot), however, 
the ongoing voting outcomes are publicly visible during the voting period, 
which may induce weak social dependence among voters. 
Such transparency can lead to minor correlations in voting behavior, 
yet these effects are typically limited compared with the heterogeneity of individual vote vectors, 
so that the assumption of approximate independence remains reasonable for statistical testing.
\end{remark}

\begin{remark}[Independence of Financial Attribute Vectors]
\label{rem:independence_financial_attribute_vectors}
When the attribute vectors represent financial quantities such as asset prices or interest rates, 
the raw levels typically exhibit strong temporal dependence.  
To approximate independence both within and between time windows, 
it is more appropriate to use return-type or difference-type vectors.  
For asset prices, one may consider the simple return
\[
  R_t^{(i)} = \frac{S_t^{(i)} - S_{t-1}^{(i)}}{S_{t-1}^{(i)}} 
  \quad \text{or} \quad
  R_t^{(i)} = \frac{S_t^{(i)}}{S_{t-1}^{(i)}} - 1,
\]
or the log-return
\[
  r_t^{(i)} = \log S_t^{(i)} - \log S_{t-1}^{(i)}.
\]
For interest rates (or yields), which are already expressed as ratios or percentages, 
it is more appropriate to use difference-type measures such as
\[
  \Delta r_t^{(i)} = r_t^{(i)} - r_{t-1}^{(i)},
\]
representing daily or monthly changes in the rate level.  
Using these transformed vectors mitigates serial dependence 
and makes the independence assumption across samples more reasonable.
\end{remark}

Let us consider the family of index subsets
\[
  \mathcal{H} := \{\, h \subset \{1,\ldots,N_1+N_2\} :\,  |h| = N_1 \,\}.
\]
Here, \(|h|\) denotes the cardinality of the set \(h\).
Thus, \(\mathcal{H}\) consists of all subsets of \(\{1,\ldots,N_1+N_2\}\) having exactly \(N_1\) elements.
In particular, \(|\mathcal{H}| = \binom{N_1+N_2}{N_1}\).

For each \(h \in \mathcal{H}\), write \(h^c := \{1,\ldots,N_1+N_2\} \setminus h\) and define the \emph{split} of the pooled sample
\[
  Z_h := (Z_i)_{i \in h} \in \mathcal{X}^{N_1}, 
  \qquad 
  Z_{h^c} := (Z_i)_{i \in h^c} \in \mathcal{X}^{N_2}.
\]
Define the permuted statistic \(\mathcal{T}_h(Z) := T_\ell(Z_h, Z_{h^c})\).
Let \(h_0 := \{1, \dots, N_1\}\) denote the original labeling so that \(\mathcal{T}_{h_0}(z)\) is the observed statistic.

In the following, we provide the definitions of both the exact \(p\)-value of the permutation test and the Monte Carlo \(p\)-value obtained via Monte Carlo approximation. 

\begin{definition}[Exact Permutation \texorpdfstring{\(p\)}{p}-Value]
The exact permutation \(p\)-value of the permutation test is
\[
  p_{\mathrm{exact}}(Z)
  := \frac{1}{|\mathcal{H}|} \sum_{h \in \mathcal{H}}
     \mathbf{1}_{\left\{\, \mathcal{T}_h(Z) \ge \mathcal{T}_{h_0}(Z) \,\right\}}.
\]
\end{definition}

When the total number of labelings 
\(|\mathcal{H}| = \binom{N_1+N_2}{N_1}\) is too large for exhaustive enumeration, 
we approximate the exact permutation distribution by Monte Carlo.

\begin{definition}[Monte Carlo Permutation \texorpdfstring{\(p\)}{p}-Value] 
Let \(n \in \mathbb{N}\), \(n \ge 1\), denote the number of random permutations 
(i.e., labelings) to be drawn from \(\mathcal{H}\).
Take i.i.d.\ random variables \(H^{(j)}\sim\mathrm{Unif}(\mathcal H)\) (\(1\le j\le n\)) independent of \(Z\).
An Monte Carlo permutation \(p\)-value obtained via Monte Carlo approximation, with standard ``\(+1\) correction'' (to avoid zero \(p\)-values), is given by
\begin{equation}
\label{eq:approximate_p_value}
\widehat{p}_n(Z)
:= \frac{ 1 + \sum_{j=1}^{n} 
\mathbf{1}_{\left\{\, T_{H^{(j)}}(Z) \ge \mathcal{T}_{h_0}(Z) \,\right\}} }{\,n+1\,}
\end{equation}
\end{definition}

The following proposition shows that, under the null hypothesis, the permutation test is conservative:
both the exact and the Monte Carlo permutation \(p\)-values control the Type~I error at level~\(\alpha\).

\begin{proposition}[Conservativeness of Permutation \(p\)-Values]
\label{prop:permutation_test_conservative}
Assume the null hypothesis \(H_{\mathrm{null}}: P_1 = P_2\), so that the concatenated sample 
\(Z=\mathrm{concat}(X^{(1)},X^{(2)})\) is exchangeable with respect to the set of labelings \(\mathcal H\).
Let \(T\) be the test statistic, and let \(\mathcal{T}_h(Z)=T(Z_h,Z_{h^c})\) denote its value under labeling \(h\in\mathcal H\).
Let \(p_{\mathrm{exact}}(Z)\) and \(\widehat p_n(Z)\) be the exact and Monte Carlo permutation \(p\)-values, respectively, defined in the preceding paragraphs.
Then, for any \(\alpha\in(0,1)\),
\[
  \mathbb{P}\!\bigl(p_{\mathrm{exact}}(Z)\le \alpha\bigr)\ \le\ \alpha.
\]
Moreover, there exists an augmented probability space \((\hat\Omega,\hat{\mathcal F},\hat{\mathbb P})\) 
carrying i.i.d.\ random labelings \(H^{(1)},\ldots,H^{(n)}\sim\mathrm{Unif}(\mathcal H)\), 
independent of \(Z\), such that for every \(n\in\mathbb N\),
\[
  \hat{\mathbb{P}}\!\bigl(\widehat p_n(Z)\le \alpha\bigr)\ \le\ \alpha.
\]
Thus both the exact and the Monte Carlo permutation tests are conservative:
their Type~I error rate does not exceed the nominal level~\(\alpha\).
\end{proposition}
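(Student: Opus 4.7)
The plan is to reduce both parts to a single combinatorial lemma: for any exchangeable real-valued tuple $(W_0,W_1,\ldots,W_n)$, the statistic $\widehat q := (n+1)^{-1}\sum_{j=0}^n \mathbf{1}_{\{W_j\ge W_0\}}$ is super-uniform, that is, $\mathbb{P}(\widehat q\le\alpha)\le\alpha$ for every $\alpha\in(0,1)$. This lemma follows from a rank argument: on every sample path at most $\lfloor\alpha(n+1)\rfloor$ of the analogous quantities $\widehat q_j$ (with $W_0$ replaced by $W_j$) can be $\le\alpha$, giving the pointwise bound $\sum_{j=0}^n\mathbf{1}_{\{\widehat q_j\le\alpha\}}\le\alpha(n+1)$, and exchangeability makes all the marginal probabilities $\mathbb{P}(\widehat q_j\le\alpha)$ equal, so averaging yields the conclusion.

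For Part~1, the PL+JS statistic $T_\ell$ is invariant under permutations within each of $x^{(1)}$ and $x^{(2)}$ separately, because the Vietoris--Rips persistence diagram $\mathcal{D}_\ell(x)$ depends only on the underlying multiset of points. Consequently $\mathcal{T}_h(z)$ depends only on the partition $\{z_i:i\in h\}$ versus $\{z_i:i\in h^c\}$, and one checks the identity $\mathcal{T}_h(\pi z)=\mathcal{T}_{\pi(h)}(z)$ for every $\pi\in S_{N_1+N_2}$. Under $H_{\mathrm{null}}$, $Z$ is exchangeable, which combined with this identity yields $(\mathcal{T}_g(Z))_{g\in\mathcal{H}}\stackrel{d}{=}(\mathcal{T}_{\pi(g)}(Z))_{g\in\mathcal{H}}$. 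Since $S_{N_1+N_2}$ acts transitively on $\mathcal{H}$, the rank $p$-value $p_h(Z):=|\mathcal{H}|^{-1}|\{g:\mathcal{T}_g(Z)\ge\mathcal{T}_h(Z)\}|$ has the same marginal law for every $h\in\mathcal{H}$; in particular $p_h(Z)\stackrel{d}{=}p_{\mathrm{exact}}(Z)$. Combined with the deterministic combinatorial bound $|\mathcal{H}|^{-1}\sum_h\mathbf{1}_{\{p_h(z)\le\alpha\}}\le\alpha$---obtained by sorting the values $\{\mathcal{T}_h(z)\}$ and noting that $p_h(z)\le\alpha$ forces $\mathcal{T}_h(z)$ to lie among the top $\lfloor\alpha|\mathcal{H}|\rfloor$ ranks---averaging over $h$ yields $\mathbb{P}(p_{\mathrm{exact}}(Z)\le\alpha)\le\alpha$.

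For Part~2, enlarge the probability space as $\hat\Omega=\Omega\times\Omega'$ carrying i.i.d.\ labelings $H^{(1)},\ldots,H^{(n)}\sim\mathrm{Unif}(\mathcal{H})$ independent of $Z$, and set $T_0:=\mathcal{T}_{h_0}(Z)$ and $T_j:=\mathcal{T}_{H^{(j)}}(Z)$ for $j=1,\ldots,n$. The key claim is that under $H_{\mathrm{null}}$ the tuple $(T_0,T_1,\ldots,T_n)$ is exchangeable on $\hat\Omega$. To establish this, introduce an auxiliary $H^{(0)}\sim\mathrm{Unif}(\mathcal{H})$ independent of everything else. Conditionally on $Z$, the tuple $(\mathcal{T}_{H^{(0)}}(Z),T_1,\ldots,T_n)$ is i.i.d.\ and hence exchangeable, so it suffices to prove the unconditional equality in law $(T_0,T_1,\ldots,T_n)\stackrel{d}{=}(\mathcal{T}_{H^{(0)}}(Z),T_1,\ldots,T_n)$. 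This reduces to showing that $h\mapsto\mathbb{E}[\phi(\mathcal{T}_h(Z),\mathcal{T}_{H^{(1)}}(Z),\ldots,\mathcal{T}_{H^{(n)}}(Z))]$ is constant on $\mathcal{H}$ for every bounded measurable $\phi$, which follows by pushing exchangeability of $Z$ through the map $(Z,H^{(1)},\ldots,H^{(n)})\mapsto(\mathcal{T}_h(Z),\mathcal{T}_{H^{(1)}}(Z),\ldots,\mathcal{T}_{H^{(n)}}(Z))$ and using invariance of $\mathrm{Unif}(\mathcal{H})^{\otimes n}$ under the $S_{N_1+N_2}$-action on $\mathcal{H}$. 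Applying the combinatorial lemma to the exchangeable tuple $(T_0,T_1,\ldots,T_n)$ then gives $\hat{\mathbb{P}}(\widehat p_n(Z)\le\alpha)\le\alpha$.

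The main obstacle is this identity in law between $(T_0,\ldots,T_n)$ and $(\mathcal{T}_{H^{(0)}}(Z),T_1,\ldots,T_n)$: the exchangeability of $Z$ only produces the strict subgroup of $\mathrm{Sym}(\mathcal{H})$ arising from reindexing the underlying coordinates, so ensuring that averaging over the auxiliary $H^{(j)}$'s absorbs the asymmetry between the fixed $h_0$ and the random $H^{(0)}$ requires a careful change-of-variable argument on $\mathcal{H}^n$. Everything else---the permutation invariance of $\mathcal{T}_h$, the transitivity of the $S_{N_1+N_2}$-action on $\mathcal{H}$, and the rank-and-averaging step behind the combinatorial lemma---is structural and routine.
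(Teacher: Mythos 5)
Your proof is correct. The paper itself does not actually prove this proposition: its ``proof sketch'' defers entirely to the standard theory of permutation tests, citing Lehmann--Romano (Ch.~15) for super-uniformity of the exact $p$-value and Phipson--Smyth for the Monte Carlo version. You instead give a self-contained argument, and it holds up. The rank-counting lemma for an exchangeable tuple $(W_0,\dots,W_n)$ is the classical super-uniformity argument, and both parts reduce to it correctly (note that $1+\sum_{j\ge 1}\mathbf{1}\{T_j\ge T_0\}=\sum_{j\ge 0}\mathbf{1}\{T_j\ge T_0\}$, so $\widehat p_n$ is exactly of the lemma's form). The two points you single out as the real content are indeed the right ones and are handled soundly: (i) the reduction from permutations of coordinates to splits $h\in\mathcal H$ needs the PL+JS statistic to be invariant under reordering within each window, which holds because the Vietoris--Rips diagram depends only on the multiset of points; this gives $\mathcal T_h(\pi z)=\mathcal T_{\pi(h)}(z)$, and transitivity of the $S_{N_1+N_2}$-action on $\mathcal H$ then makes all rank $p$-values $p_h(Z)$ equal in law, so the deterministic bound $|\mathcal H|^{-1}\sum_h\mathbf{1}\{p_h(z)\le\alpha\}\le\alpha$ averages to the exact statement; (ii) for the Monte Carlo part, conditional i.i.d.-ness of the $T_j$ given $Z$ is not by itself enough, and your route through the auxiliary $H^{(0)}$ — establishing $(T_0,T_1,\dots,T_n)\stackrel{d}{=}(\mathcal T_{H^{(0)}}(Z),T_1,\dots,T_n)$ by pushing exchangeability of $Z$ through the induced bijection of $\mathcal H$ and using invariance of $\mathrm{Unif}(\mathcal H)^{\otimes n}$ under that coordinate-wise action — is precisely what delivers joint exchangeability of the tuple, after which the lemma applies. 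What the paper's approach buys is brevity and reliance on established references; what yours buys is a verification that the cited general theory really applies to this specific statistic and to the subset-sampling scheme of the Monte Carlo procedure (random labelings $h\in\mathcal H$ preserving group sizes rather than full permutations), which is exactly the step where the within-window invariance of $T_\ell$ is needed and which the citation-only proof leaves implicit.
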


\begin{proof}[Proof sketch]
The results are standard properties of permutation tests under the null hypothesis of exchangeability. 
For any measurable test statistic \(T\), the permutation \(p\)-value 
is known to be super-uniform, that is, 
\(\Pr(p_{\mathrm{exact}}\le\alpha)\le\alpha\), 
and the same holds for its Monte Carlo approximation. 
A concise treatment can be found in 
\cite{lehmann2005} (Chapter~15) 
and \cite{phipson2010}. 
\end{proof}

To assess the adequacy of the number \(n\) of random labelings, we record a bound
on the Monte Carlo variability of the permutation \(p\)-value.

\begin{proposition}[Monte Carlo Variance Bound]
\label{prop:mc-se}
Under the setting described above, we have
\[
  \hat{\mathbb{V}}\!\bigl(\widehat p_n(Z)\,\big|\,Z\bigr)
  \;\le\;
  \frac{1}{4(n+1)}
  \quad\text{a.s.},
\]
where \(\hat{\mathbb V}(\cdot\,|\,Z)\) denotes the conditional variance under \(\hat{\mathbb P}\) given \(Z\).
\end{proposition}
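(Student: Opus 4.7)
The plan is to exploit the conditional independence of the random labelings $H^{(1)},\ldots,H^{(n)}$ from $Z$ to reduce $\widehat p_n(Z)$ to an affine function of a binomial count, and then apply the elementary bound $q(1-q)\le 1/4$. Since $\widehat p_n(Z)=(1+K)/(n+1)$ with $K:=\sum_{j=1}^n \mathbf{1}\{\mathcal T_{H^{(j)}}(Z)\ge \mathcal T_{h_0}(Z)\}$, it suffices to bound the conditional variance of $K$ given $Z$.

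First, I would introduce the conditional exceedance probability
\[
q(Z):=\hat{\mathbb P}\!\bigl(\mathcal T_{H^{(1)}}(Z)\ge \mathcal T_{h_0}(Z)\,\big|\,Z\bigr)
=\frac{1}{|\mathcal H|}\sum_{h\in\mathcal H}\mathbf{1}_{\{\mathcal T_h(Z)\ge \mathcal T_{h_0}(Z)\}},
\]
which is a $\sigma(Z)$-measurable $[0,1]$-valued random variable. Because $H^{(1)},\ldots,H^{(n)}$ are i.i.d.\ uniform on $\mathcal H$ and independent of $Z$, the indicators $\mathbf{1}\{\mathcal T_{H^{(j)}}(Z)\ge \mathcal T_{h_0}(Z)\}$ are, conditionally on $Z$, i.i.d.\ Bernoulli with parameter $q(Z)$. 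Hence $K\mid Z\sim\mathrm{Binomial}(n,q(Z))$, so $\hat{\mathbb V}(K\mid Z)=n\,q(Z)(1-q(Z))$ almost surely.

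Since $\widehat p_n(Z)$ is an affine function of $K$ with slope $1/(n+1)$, we obtain
\[
\hat{\mathbb V}\!\bigl(\widehat p_n(Z)\,\big|\,Z\bigr)
=\frac{n\,q(Z)(1-q(Z))}{(n+1)^2}
\le \frac{n}{4(n+1)^2}
\le \frac{1}{4(n+1)} \quad\text{a.s.},
\]
where the first inequality uses the pointwise bound $q(1-q)\le 1/4$ on $[0,1]$ and the second uses $n/(n+1)\le 1$.

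The argument is essentially routine once the conditioning framework is set up; there is no real obstacle. The only minor point requiring care is to make explicit, on the augmented probability space $(\hat\Omega,\hat{\mathcal F},\hat{\mathbb P})$ introduced in Proposition~\ref{prop:permutation_test_conservative}, that the conditional law of each $\mathbf{1}\{\mathcal T_{H^{(j)}}(Z)\ge \mathcal T_{h_0}(Z)\}$ given $Z$ is genuinely Bernoulli$(q(Z))$ and that these indicators remain conditionally independent given $Z$; both facts follow immediately from the joint independence of $\{H^{(j)}\}_{j=1}^n$ and $Z$.
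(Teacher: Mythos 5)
Your proposal is correct and follows essentially the same route as the paper's proof: condition on \(Z\), observe that the exceedance indicators are conditionally i.i.d.\ Bernoulli with parameter \(q(Z)\) so that the count is conditionally \(\mathrm{Binomial}(n,q(Z))\), and then apply \(q(1-q)\le 1/4\) together with \(n/(n+1)\le 1\). The only difference is cosmetic—you spell out the closed form of \(q(Z)\) and the affine reduction from \(\widehat p_n\) to the binomial count a bit more explicitly than the paper does.
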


\begin{proof}
Define
\[
  p(Z)\;:=\;\hat{\mathbb{P}}\!\bigl(T_{H^{(j)}}(Z)\ge \mathcal{T}_{h_0}(Z)\,\big|\,Z\bigr),
\]
which does not depend on \(j\).
Conditional on \(Z\), the indicators 
\(I_j(Z)=\mathbf 1_{\{\,T_{H^{(j)}}(Z)\ge \mathcal{T}_{h_0}(Z)\,\}}\)
are i.i.d.\ \(\mathrm{Bernoulli}\!\bigl(p(Z)\bigr)\), 
so that 
\(\sum_{j=1}^n I_j(Z)\sim\mathrm{Binomial}\!\bigl(n,p(Z)\bigr)\).
Therefore,
\[
  \hat{\mathbb{V}}\!\bigl[\widehat p_n(Z)\mid Z\bigr]
  =\frac{n\,p(Z)\bigl(1-p(Z)\bigr)}{(n+1)^2}
  \;\le\;\frac{1}{4(n+1)}.
\]
\end{proof}

\begin{remark}[Multiple Testing Across Consecutive Windows]
\label{rem:multiple_testing}
In practical applications, it is often of interest to test whether 
a sequence of consecutive windows 
\(\mathcal{W}_1, \ldots, \mathcal{W}_K\) 
share a common distribution.  
This involves testing the series of local null hypotheses
\[
  H_{\mathrm{null},k}: P_k = P_{k+1},
  \qquad k = 1, \ldots, K-1,
\]
simultaneously at a given significance level~\(\alpha\).
When each test is conducted at level~\(\alpha\), 
the overall probability of at least one false rejection, 
known as the \emph{family-wise error rate} (FWER),
tends to exceed~\(\alpha\),
making the overall testing procedure \emph{anti-conservative} 
(i.e., prone to false positives).

A simple and widely used approach to control the FWER is 
the \emph{Bonferroni correction}, 
which replaces the nominal level~\(\alpha\) by \(\alpha/(K-1)\) 
for each of the \(K-1\) individual tests.
Although the Bonferroni method guarantees that the FWER does not exceed~\(\alpha\),
it can be overly conservative and thus reduce statistical power.

To improve power while maintaining FWER control, 
the \emph{Holm step-down procedure} can be applied.
Let \(p_{(1)} \le \cdots \le p_{(K-1)}\) be the ordered sequence of individual \(p\)-values.
Starting from the smallest one, 
the procedure sequentially rejects \(H_{\mathrm{null},(1)}, H_{\mathrm{null},(2)}, \ldots\)
as long as \(p_{(k)} \le \alpha/(K-k)\) holds,
and stops at the first violation of this inequality.
All previously rejected hypotheses are then declared significant.
This step-down method guarantees FWER control at level~\(\alpha\)
and typically yields higher power than Bonferroni’s rule.

An alternative strategy is to replace the local hypotheses above
by a single \emph{global null hypothesis},
\[
  H_{\mathrm{null}}^{\mathrm{global}} : P_1 = P_2 = \cdots = P_K,
\]
and perform a \emph{max-\(T\)} test.
In this approach, permutation resampling is carried out 
over the pooled sample \(\mathcal{W}_1 \cup \cdots \cup \mathcal{W}_K\),
redistributing the data across windows while keeping their sizes fixed.
Because the test statistic must be computed for all window pairs
under many permutations, the computational cost increases substantially,
especially when persistence diagrams are involved.

From a practical viewpoint, 
when the goal is to promptly detect distributional changes over time, 
the Bonferroni and Holm step-down procedures 
often provide a good balance between statistical validity and computational efficiency.
\end{remark}

\end{appendices}

\bibliography{sn-bibliography}


\end{document}